\theoremstyle{plain}
   \newtheorem{theorem}{Theorem}[section]
   \newtheorem{proposition}[theorem]{Proposition}
   \newtheorem{prop}[theorem]{Proposition}
   \newtheorem{lemma}[theorem]{Lemma}
   \newtheorem{corollary}[theorem]{Corollary}
    \newtheorem{cor}[theorem]{Corollary}
   \newtheorem*{theorem*}{Theorem}
\theoremstyle{definition}
   \newtheorem{definition}[theorem]{Definition}
   \newtheorem{example}[theorem]{Example}
   \newtheorem{remark}[theorem]{Remark}
\numberwithin{equation}{section}
\newcommand\Symm{\mathfrak{S}}
\newcommand\rank{\operatorname{rank}}
\newcommand\codim{\operatorname{codim}}
\newcommand\Par{\operatorname{Par}}
\newcommand\wt{\operatorname{wt}}
\newcommand\fix{\operatorname{fix}}
\newcommand{\defeq}{\overset{\text{def}}{=\hspace{-4pt}=}}
\newcommand{\type}[1]{\mathrm{#1}}
\newcommand{\BBB}{\mathcal{B}}
\newcommand\CC{{\mathbb{C}}}
\newcommand\ZZ{{\mathbb{Z}}}
\newcommand{\lR}{\ell_{R}}
\newcommand{\aaa}{\mathbf{a}}
\newcommand{\cyc}{\operatorname{cyc}}
\newcommand{\Parmax}{\operatorname{Par}_{\mathrm{max}}}
\newcommand{\id}{\mathrm{id}}
\newcommand{\pt}[1]{[\![ #1 ]\!]}
 \newcommand{\yg}[1]{{\color{Green} #1}} 
 \newcommand{\bl}[1]{{\color{Plum} #1}}
 \newcommand{\br}[1]{{\color{blue} #1}}
 \newcommand{\yo}[1]{{\color{RedOrange} #1}}
\newtheoremstyle{TheoremNum}
        {}{}                            
        {\itshape}                      
        {}                              
        {\bfseries}                     
        {.}                             
        { }                             
        {\thmname{#1}\thmnote{ \bfseries #3}}
    \theoremstyle{TheoremNum}
    \newtheorem{corr}[theorem]{Corollary}
    \newtheorem{thmm}[theorem]{Theorem}
\begin{document}

\title{The Hurwitz action in complex reflection groups}
\author{Joel Brewster Lewis and Jiayuan Wang}
\address{Department of Mathematics \\ George Washington University \\ Washington, DC, USA 20052}
\email{\{jblewis, j453w588\}@gwu.edu}

\maketitle

\begin{abstract}
We enumerate Hurwitz orbits of shortest reflection factorizations of an arbitrary element in the infinite family $G(m, p, n)$ of complex reflection groups.  As a consequence, we characterize the elements for which the action is transitive and give a simple criterion to tell when two shortest reflection factorizations belong to the same Hurwitz orbit.  We also characterize the quasi-Coxeter elements (those with a shortest reflection factorization that generates the whole group) in $G(m, p, n)$.
\end{abstract}


\section{Introduction}

Given an arbitrary group $G$, there is an action of the braid group 
\[
\BBB_{n} = \left\langle \sigma_1, \ldots, \sigma_{n - 1} \; \middle| \begin{array}{ll}
\sigma_i \sigma_{i + 1} \sigma_i = \sigma_{i + 1} \sigma_i \sigma_{i + 1} & \text{for } i = 1, \ldots, n - 2 \\
\sigma_i \sigma_j = \sigma_j \sigma_i & \text{if } |i - j| > 1
\end{array}
\right\rangle
\] on the set $G^n$ of $n$-tuples of elements of $G$: the generator $\sigma_i$ acts via a \emph{Hurwitz move} 
\[
\begin{array}{ccccl}
 \big(t_{1}, \; \ldots , \; t_{i-1}, & t_{i}, & t_{i+1}, & t_{i + 2}, \; \ldots , \; t_{n}\big) & \overset{\sigma_i}{\longmapsto}\\
 \big(t_{1}, \; \ldots , \; t_{i-1}, & t_{i+1}, & t_{i+1}^{-1}t_{i}t_{i+1}, & t_{i+2}, \; \ldots , \; t_{n}\big) & ,
\end{array}
\]
preserving the product of the tuple.  It is easy to check that this extends to an action of $\BBB_n$, which is called the \emph{Hurwitz action}.
The case that $G$ is a reflection group and the factors $t_j$ are reflections is of particular interest.  Notably, one ingredient in Bessis's proof \cite{Bessis} of the $K(\pi, 1)$ property for complements of finite complex reflection arrangements is the following transitivity property.
\begin{theorem}[{Bessis \cite[Prop.~7.6]{Bessis}}]
\label{thm:bessis}
If $G$ is a well-generated complex reflection group and $c$ is a Coxeter element in $G$, the Hurwitz action is transitive on minimum-length reflection factorizations of $c$.
\end{theorem}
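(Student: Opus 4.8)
The plan is to induct on the rank $n = \rank(W)$, organizing everything around the interval $[\id, c]$ in the absolute order (where $u \leq_R v$ means $\lR(u) + \lR(u^{-1}v) = \lR(v)$). Since $W$ is well-generated and $c$ is a Coxeter element we have $\lR(c) = n$, so a minimum-length reflection factorization of $c$ is a tuple $(t_1, \dots, t_n)$ of reflections whose partial products $w_i = t_1 \cdots t_i$ form a maximal chain $\id = w_0 \lessdot w_1 \lessdot \dots \lessdot w_n = c$ in $[\id, c]$. A direct computation shows that the Hurwitz generator $\sigma_i$ fixes every partial product except $w_i$, which it sends to another element of the open rank-two interval $(w_{i-1}, w_{i+1})$; thus the Hurwitz action is exactly a ``local move'' action on maximal chains of $[\id, c]$, and the whole problem becomes a connectivity statement about these chains.

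The base case $n = 1$ is immediate, as there is a unique factorization. For the inductive step I would first isolate the key lemma: for any reflection $r \leq_R c$, the Hurwitz orbit of a given factorization contains a tuple with $r$ in the first position. Granting this, fix two minimum-length factorizations $\underline{t}$ and $\underline{s}$ and bring $s_1$ to the front of a representative of the orbit of $\underline{t}$, obtaining $(s_1, u_2, \dots, u_n)$. Then $u_2 \cdots u_n = s_1^{-1} c = s_2 \cdots s_n$ is an element of $[\id, c]$ of reflection length $n-1$, whose parabolic closure $W'$ is a reflection group of rank $n-1$ in which $s_1^{-1}c$ is again a Coxeter element (this is part of the structure theory of parabolic subgroups and of $[\id,c]$ for well-generated groups). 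By the inductive hypothesis the Hurwitz action of $\BBB_{n-1}$ is transitive on minimum-length factorizations of $s_1^{-1}c$ in $W'$, so $(u_2, \dots, u_n)$ and $(s_2, \dots, s_n)$ are Hurwitz-equivalent; since Hurwitz moves in positions $2, \dots, n$ never disturb the first entry, this connects $(s_1, u_2, \dots, u_n)$ to $(s_1, s_2, \dots, s_n) = \underline s$, closing the induction.

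It remains to prove the key lemma, and this is where the real content and the main obstacle lie. Translated through the chain dictionary above, the lemma asserts that every atom of $[\id, c]$ occurs as the bottom element $w_1$ of some maximal chain reachable by local moves. The natural route is: first, establish that $[\id, c]$ is a lattice — this is the noncrossing partition lattice $NC(W,c)$, and for non-Coxeter well-generated complex reflection groups its lattice property is precisely the delicate, partly case-by-case input in Bessis's work; and second, verify the base transitivity statement inside each rank-two interval $(w_{i-1}, w_{i+1})$, i.e.\ Hurwitz transitivity for Coxeter elements of rank-two reflection groups, which can be checked directly. Given the lattice property, one shows that maximal chains of $[\id,c]$ are connected by local moves and that these moves suffice to place any prescribed atom at the bottom. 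I expect the lattice property itself, rather than the combinatorial chain manipulation, to be the hard part; indeed Bessis's proof packages all of this through the dual braid monoid, whose being a Garside monoid encodes exactly the lattice structure needed here.
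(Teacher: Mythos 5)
First, a point of comparison: the paper does not prove this statement at all --- it is quoted verbatim from Bessis \cite[Prop.~7.6]{Bessis} as background, so there is no ``paper proof'' for your argument to match. Judged on its own, your skeleton (induct on rank; key lemma that any reflection $r \leq_R c$ can be Hurwitz-moved into first position; then pass to the parabolic closure of $s_1^{-1}c$, a rank-$(n-1)$ group in which $s_1^{-1}c$ is again a Coxeter element) is exactly the route that works for \emph{real} reflection groups, as in Igusa--Schiffler \cite{IS10} and Baumeister--Dyer--Stump--Wegener \cite{BDSW}. But for complex $W$ your proof of the key lemma is a genuine gap, and it is the whole content of the theorem. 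You claim that the lattice property of $[\id, c]$, together with transitivity inside rank-two intervals, yields connectivity of maximal chains under local moves and the ability to place a prescribed atom at the bottom. That implication is false as a matter of general lattice theory: take the graded rank-$3$ lattice with elements $0 < a < x < 1$ and $0 < b < y < 1$ (atoms $a, b$; coatoms $x, y$; no other relations). Every rank-two interval trivially has connected chains, yet the two maximal chains admit no local moves at all and lie in different components. So lattice-ness plus rank-two transitivity buys you nothing formal; connectivity of maximal chains under local moves \emph{is} Hurwitz transitivity, and your reduction is circular at precisely this point. No uniform argument of this shape is known for complex reflection groups --- which is why Bessis's actual proof looks nothing like this: he identifies the set of reduced factorizations of $c$ with a fiber of the Lyashko--Looijenga covering and deduces transitivity of the monodromy (= Hurwitz) action from irreducibility of the covering space, with the lattice property of $NC(W,c)$ and related inputs verified partly case-by-case in the exceptional types.

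A secondary concern: the structural input you invoke for the inductive step --- that an element $w \leq_R c$ with $\lR(w) = n-1$ has parabolic closure of rank $n-1$ in which it is again a Coxeter element --- is elementary for real groups (via Carter's lemma) but is itself a nontrivial theorem of Bessis in the complex setting, established with the same machinery (trivialization of the LL covering, ribbon theory) that underlies Proposition 7.6. If you cite it as ``part of the structure theory'' you must check you are not quoting a result whose proof already uses, or is developed jointly with, the transitivity you are trying to prove. In short: the induction frame and the rank-two base case are fine, the reduction of everything to the key lemma is fine, but the key lemma is where all the difficulty lives, and your sketch of it does not survive scrutiny.
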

The broader context for this result is the so-called \emph{dual Coxeter theory}, developed by Bessis \cite{Bes03} and Brady and Watt \cite{Br01, BW02} for real reflection groups and then extended to the complex case.  Working from this perspective, Baumeister--Gobet--Roberts--Wegener \cite{BGRW} gave a complete characterization of those elements in a finite real reflection group with the property that the Hurwitz action is transitive on their minimum-length reflection factorizations.  (The precise statement of their result may be found below as Theorem~\ref{thm:BGRW qc}.)

Our goal in the present paper is to extend the work of Baumeister et al.\ to the complex case.  We have three main results.  First, in Theorem~\ref{main theorem}, we give an exact enumeration of Hurwitz orbits of minimum-length reflection factorizations for an arbitrary element in the infinite family $G(m, p, n)$ of ``combinatorial'' complex reflection groups.  As a consequence, we obtain a characterization (Corollary~\ref{cor transitive}) of the elements in $G(m, p, n)$ such that the Hurwitz action is transitive on their minimum-length reflection factorizations.  Second, in Theorem~\ref{thm:invariants}, we solve the ``inverse problem'' for Hurwitz equivalence on minimum-length reflection factorizations in $G(m, p, n)$, showing that two such factorizations of the same element belong to the same Hurwitz orbit if and only if they generate the same subgroup of $G(m, p, n)$.  Third, in Theorem~\ref{cor:qc length n}, extend a result Baumeister et al.\ by showing that for an arbitrary finite complex reflection group $G$ of rank $n$, if $g$ in $G$ has reflection length $n$ and has a minimum-length factorization that generates $G$, then in fact all minimum-length factorizations of $g$ generate $G$.

The plan of the paper is as follows: in Section~\ref{sec:background}, we give background on complex reflection groups, reflection factorizations, and the Hurwitz action, and establish notation and conventions that will be used throughout the paper.  Section~\ref{sec:main} is devoted to the statement and proof of our main enumerative theorem (Theorem~\ref{main theorem}).  In Section~\ref{sec:invariants}, we build on the tools developed in Section~\ref{sec:main} in order to solve the inverse problem for Hurwitz equivalence in $G(m, p, n)$ (Theorem~\ref{thm:invariants}).  In Section~\ref{sec:qc}, we introduce the \emph{quasi-Coxeter property}, which plays a key role in the work of Baumeister et al.\ in the real case.  We characterize the elements in $G(m, p, n)$ with this property, discuss its connection with the transitivity of the Hurwitz action, and prove that a weak form of the property implies a stronger form for any complex reflection group (Theorem~\ref{cor:qc length n}).  Finally, in Section~\ref{sec:open}, we discuss several open problems.

\subsection*{Acknowledgements}
We thank Alejandro Morales and Christian Stump for helpful discussions during the preparation of this paper.  We are deeply indebted to Theo Douvropoulos for many comments and suggestions, and especially for his advice and assistance with the computations in Sections~\ref{sec:invariants} and~\ref{sec:qc}.

Work of the first author was supported in part by an ORAU Powe award, a Simons collaboration grant (634530), and the GW University Facilitating Fund.

\section{Background}
\label{sec:background}

\subsection{Conventions}
\label{sec:conventions}

Throughout this paper, $m$, $p$, and $n$ will represent positive integers with $p \mid m$. Since $p$ divides $m$, the cyclic group $\ZZ/m\ZZ$, whose elements are equivalence classes of integers modulo $m$, has a unique subgroup $p\ZZ / m\ZZ \cong \ZZ / (m/p)\ZZ$ of order $m/p$: it consists of those equivalence classes whose elements are divisible by $p$.  We may write $k \equiv 0 \pmod{p}$ to indicate that an element $k$ in $\ZZ/m\ZZ$ belongs to this subgroup, and either $k = 0$ or $k \equiv 0 \pmod{m}$ to indicate that $k$ is the identity in $\ZZ/m\ZZ$.  As in the previous sentence, we do not distinguish notationally between the integer $k$ and its equivalence class modulo $m$.

Given a collection of $k_1, k_2, \ldots, k_n$ of elements of $\ZZ/m\ZZ$, they generate a subgroup $k\ZZ/m\ZZ$ for some $k$ (the minimal subgroup that contains all of them).  If we take $k_1, \ldots, k_n$ to be any representatives of their equivalence classes, then the smallest positive representative $k$ of $k\ZZ$ is $k = \gcd(m, k_1, \ldots, k_n)$.  All greatest common divisors that appear in this paper (particularly, as in Definition~\ref{def r star}) will be meant in this sense.

Again throughout the paper, $\omega = \exp(2\pi i / m)$ will denote a fixed primitive $m$th root of unity.  The collection $\{ \omega^i \colon i \in \ZZ\}$ of all $m$th roots of unity is in natural bijection with $\ZZ/m\ZZ$, and the $(m/p)$th roots of unity correspond exactly to the elements $k \equiv 0 \pmod{p}$.

\subsection{Complex reflection groups}
For a general reference on the material in this section, see \cite{LehrerTaylor}.
Given a finite-dimensional complex vector space $V$, a \emph{reflection} is a linear transformation $t: V \rightarrow V$ whose fixed space $\operatorname{ker}(t-1)$ is a hyperplane (i.e., has codimension $1$), and a finite subgroup $G$ of $GL(V)$ is called a \emph{complex reflection group} if $G$ is generated by its subset $R$ of reflections.\footnote{In the literature, it is sometimes required \emph{a priori} that reflections have finite order or that complex reflection groups live in the unitary group on $V$.  The former condition is implied by the fact that $G$ is finite, and the latter may be recovered by a standard averaging trick \cite[Lem.~1.3]{LehrerTaylor}.} Complex reflection groups were classified by Shephard and Todd~\cite{ST54}: every complex reflection group is a direct product of irreducibles, and every irreducible is isomorphic to a group of the form
\[
G(m, p, n) \defeq \left\{\begin{array}{l} n \times n \text{ monomial matrices whose nonzero entries are}\\m\text{th} \text{ roots of unity with product a } \frac{m}{p}\text{th} \text{ root of unity}\end{array} \right\}
\]
for positive integers $m$, $p$, $n$ with $p \mid m$ or to one of $34$ exceptional examples.

When $p < m$, the group $G(m, p, n)$ contains two types of reflections: for $a = 0, 1, \ldots, m - 1$, the \emph{transposition-like reflections}
\[
\left[
\begin{smallmatrix}
1&&&&&&&&&&\\
&\ddots &&&&&&&&&\\
&&1&&&&&&&& \\
&&&&&&&\overline{\omega}^{a} &&&\\
&&&&1&&&&&&\\
&&&&&\ddots&&&&&\\
&&&&&&1&&&&\\
&&&\omega^{a}&&&&&&&\\
&&&&&&&&1&&\\
&&&&&&&&&\ddots&\\
&&&&&&&&&&1
\end{smallmatrix}
\right]
\]
of order $2$, and for $b = p, 2p, \ldots, \left(\frac{m}{p} - 1\right)p$ the \emph{diagonal reflections}
\begin{equation}
\label{eq:diagonal reflection}
\begin{bmatrix}
1&&&&&&\\
&\ddots&&&&&\\
&&1&&&&\\
&&&\omega^{b}&&&\\
&&&&1&&\\
&&&&&\ddots&\\
&&&&&&1
\end{bmatrix}
\end{equation}
of various orders.  The group $G(m, m, n)$ contains only the transposition-like reflections.

One recovers the infinite families of real reflection groups (finite Coxeter groups) as the following special cases: the group $G(1, 1, n)$ is the symmetric group $\Symm_n$ (of Coxeter type $\type{A}_{n - 1}$); $G(2, 1, n)$ is the hyperoctahedral group of signed permutations (type $\type{B}_n$/$\type{C}_n$); $G(2, 2, n)$ is the group of even-signed permutations (type $\type{D}_n$); and $G(m, m, 2)$ is the dihedral group of order $2 \times m$ (type $\type{I}_2(m)$).

For every $m$, $p$, $n$, there is a natural projection map 
\[
\pi: G(m, p, n) \twoheadrightarrow G(1, 1, n) = \Symm_n
\]
defined as follows: for $g \in G(m, p, n)$, the matrix $\pi(g)$ is the result of replacing every root of unity in the matrix of $g$ with $1$.  The resulting permutation is called the \emph{underlying permutation} of $g$.  It will be convenient to use the following, more compact, notation for elements of $G(m, p, n)$: one writes $g = [u; (a_1, \ldots, a_n)]$ where $u = \pi(g)$ and $a_j \in \ZZ/m\ZZ$ is the exponent of $\omega = \exp(2\pi i/m)$ in the nonzero entry of the $j$th column of $g$.  For example, in this notation we have in $G(30, 5, 6)$ that
\begin{equation}
\label{eq:example element}
\left[\begin{smallmatrix}
&\omega^{21} &&&&\\
\omega&&&&&\\
&&&&\omega^{2}&\\
&&\omega^{2}&&&\\
&&&\omega^{3}&&\\
&&&&&\omega^{6}
\end{smallmatrix}\right]
= [214536 ; (1, 21, 2, 3, 2, 6)] = 
[(12)(345)(6) ; (1, 21, 2, 3, 2, 6)].
\end{equation}
This notation reveals that $G(m, 1, n)$ has the structure of a \emph{wreath product}, namely, $G(m, 1, n) \cong (\ZZ/m\ZZ) \wr \Symm_n $, with multiplication given by
\[
[u; (a_1, \ldots, a_n)] \cdot [v; (b_1, \ldots, b_n)] = \left[uv; (a_{v(1)} + b_1, \ldots, a_{v(n)} + b_n)\right].
\]
In the particular case of transposition-like reflections, we abbreviate the notation even further, setting
\[
[(i\, j); a] \defeq
[(i\, j); (0, \ldots, 0, a, 0, \ldots, 0, -a, 0, \ldots, 0)].
\]

Given an element $g = [u; (a_1, \ldots, a_n)]$ of $G(m, p, n)$, the value $a_i$ is called the \emph{weight} of $i$.  Further, for any subset $S \subseteq \{1, \ldots, n\}$, we define $\sum_{i \in S} a_i$ to be the \emph{weight of $S$ in $g$}.  This notion will be particularly relevant when $S$ is the set of entries of a cycle of $g$, or when $S = \{1, \ldots, n\}$ and $a_1 + \ldots + a_n$ is the \emph{weight of $g$}.  For instance, every transposition-like reflection has weight $0$, while the diagonal-like reflection in \eqref{eq:diagonal reflection} has weight $b$.  In this language, an element of $G(m,1,n)$ belongs to $G(m,p,n)$ if and only if its weight is a multiple of $p$.

\subsection{Shi's formula for reflection length}
\label{sec:reflection length}

Fix a complex reflection group $G$ with reflections $R$.  Since $G$ is a reflection group, every element $g$ of $G$ can be written as a product of reflections.  If $f = (t_1, \ldots, t_\ell)$ is a tuple of reflections such that $g = t_1 \cdots t_\ell$, we say that $f$ is a \emph{(reflection) factorization} of $g$.  We say that a reflection factorization $F$ of $g$ is \emph{shortest}, \emph{minimum}, or \emph{of minimum length} if there is no reflection factorization of $g$ using fewer reflections, and we define the \emph{reflection length} $\lR(g)$ of $g$ to be the length
\[
\lR(g) \defeq \min \{\ell: g=t_{1}t_{1}\cdots t_{\ell} \text{ for some } t_{i}\in R\}
\]
of the shortest factorizations. 

When $G$ is a finite \emph{real} reflection group, the reflection length of an element can be interpreted geometrically: one has $\lR(g) = \codim_V(\fix(g))$, where $\fix(g) = \{v \in V \colon g(v) = v\}$ is the fixed space of $g$ \cite[Lem.~2]{Carter}. The same is true in $G(m, 1, n)$, but \emph{not} in any of the other irreducible complex reflection groups \cite{FG, Shi}.  In \cite{Shi}, Shi gave a combinatorial formula for reflection length in the group $G(m, p, n)$ that we describe next.

For an element $g\in G(m, p, n)$, we say that a \emph{cycle} of $g$ is a cycle of the underlying permutation $\pi(g)$, and we denote by $\cyc(g)$ the number of cycles of $g$.  A \emph{cycle partition} $\Pi$ of $g$ is a set partition of the set $\{ C_{1}, \ldots, C_{\cyc(g)} \}$ of cycles of $g$ such that for every part in $\Pi$, the corresponding cycle weights sum to $0\pmod{p}$.  (Such partitions always exist because the weight of $g$ is $0 \pmod{p}$.) For example, the element
\[
g= \begin{bmatrix} \omega^{2}&&&\\ &\omega^{2}&&\\ &&\omega^{2}&\\ &&&\omega^{2}\end{bmatrix} = [\id; (2, 2, 2, 2)] \in G(4,4,4)
\]
has four cycle partitions:
\[
\Par(g) = \Big\{\pt{(1)(2) \mid (3)(4)}, \pt{(1)(3) \mid (2)(4)}, \pt{(1)(4) \mid (2)(3)}, \pt{(1)(2)(3)(4)} \Big\}.
\]
Observe that the set of cycle partitions depends on the choice of the group containing $g$: if we were to view the same matrix $g$ as an element of $G(4, 2, 4)$ then $\Par(g)$ would consist of all fifteen set partitions of the four cycles.

Given a partition $\Pi$ of an element $g \in G(m, p, n)$, let $|\Pi|$ denote the number of parts of $\Pi$ and let $v_m(\Pi)$ denote the number of parts of $\Pi$ of weight $0$ (not just $0 \pmod{p}$).  The \emph{value} $v(\Pi)$ of a cycle partition $\Pi$ is
\[
v(\Pi) \defeq |\Pi| + v_m(\Pi).
\]
A partition is \emph{maximum} if its value is the largest among the values of all possible cycle partitions of $g$ (relative to the given $m, p$), and we denote by $\Parmax(g)$ the set of maximum cycle partitions of $g$.  For example, with $g = [\id; (2, 2, 2, 2)] \in G(4, 4, 4)$ as above, the three partitions into two parts have value $4$, while the partition in one part has value $2$, and so \[
\Parmax(g)=\Big\{\pt{(1)(2) \mid (3)(4)}, \pt{(1)(3) \mid (2)(4)}, \pt{(1)(4) \mid (2)(3)}\Big\}.
\]

\begin{theorem}[{\cite[Thm.~4.4]{Shi}}]\label{shi}
Given an element $g \in G(m, p, n)$, its reflection length is 
\[
\lR(g)= n + \cyc(g)-v(\Pi),
\] 
where $\cyc(g)$ is the number of cycles of $g$ and $\Pi\in \Parmax(g)$ is any maximum cycle partition of $g$.
\end{theorem}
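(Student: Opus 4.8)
The plan is to prove the formula by establishing matching upper and lower bounds on $\lR(g)$, in each case reducing to a ``local'' computation attached to a single part of a cycle partition. It is convenient first to rewrite the target quantity as a sum over parts: if $\Pi$ is any cycle partition and a part $P$ consists of cycles covering $k(P)$ points, grouped into $r(P)$ cycles, with total weight $w_P$, then
\[
n + \cyc(g) - v(\Pi) = \sum_{P \in \Pi} \bigl( k(P) + r(P) - 1 - \epsilon(P) \bigr),
\]
where $\epsilon(P) = 1$ if $w_P \equiv 0 \pmod m$ and $\epsilon(P) = 0$ otherwise. This is immediate from $\sum_P k(P) = n$, $\sum_P r(P) = \cyc(g)$, $\sum_P 1 = |\Pi|$, and $\sum_P \epsilon(P) = v_m(\Pi)$. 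Thus it suffices to show that $\lR(g)$ equals the minimum over \emph{valid} cycle partitions $\Pi$ of the right-hand sum.

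\textbf{Upper bound.} Fix a maximum cycle partition $\Pi$. For each part $P$ I build the restriction $g_P$ of $g$ to the points of $P$ (acting as the identity elsewhere) using exactly $k(P) + r(P) - 1 - \epsilon(P)$ reflections, and then concatenate these factorizations; since distinct parts have disjoint supports, the corresponding products commute and multiply to $g$. To build $g_P$, I first use $k(P) - 1$ transposition-like reflections along a spanning tree of the $k(P)$ points, whose product is a single $k(P)$-cycle, and I observe that by adjusting the weight parameters of these reflections one can realize any weight vector summing to $0$ on the support. Next, $r(P) - 1$ further transposition-like reflections split this single cycle into the $r(P)$ target cycles, still with total weight $0$. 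If $w_P \equiv 0 \pmod m$ this already realizes $g_P$ with $k(P) + r(P) - 2$ reflections; if $w_P \not\equiv 0 \pmod m$, one additional diagonal reflection of weight $w_P \equiv 0 \pmod p$ supplies the missing weight, for a total of $k(P) + r(P) - 1$. Summing over parts gives $\lR(g) \le n + \cyc(g) - v(\Pi)$.

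\textbf{Lower bound.} Let $g = t_1 \cdots t_\ell$ be any factorization, with $s$ transposition-like and $d$ diagonal reflections, $\ell = s + d$. Form the graph $\Gamma$ on $\{1, \ldots, n\}$ whose edges are the underlying transpositions of the transposition-like reflections, and let $K_1, \ldots, K_c$ be its connected components, with $n_j$ points, $e_j$ edges, and $d_j$ diagonal reflections supported inside $K_j$. Since diagonal reflections act trivially on the underlying permutation, $\pi(g)$ is a product of transpositions lying within components, so each cycle of $g$ lies in a single $K_j$; this defines a partition $\Pi_\Gamma$ of the cycles of $g$ into $c$ parts, with $r_j$ cycles in part $j$. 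Grouping the factorization by component shows that the weight $W_j$ of part $j$ is a sum of weights of diagonal reflections, each $\equiv 0 \pmod p$, so $W_j \equiv 0 \pmod p$ and $\Pi_\Gamma$ is a valid cycle partition. It then remains to verify, per component, the inequality $e_j + d_j \ge n_j + r_j - 1 - \epsilon_j$, with $\epsilon_j = 1$ exactly when $W_j \equiv 0 \pmod m$. Two facts drive this. First, a product of $e_j$ transpositions whose underlying graph is connected on $n_j$ vertices has at most $e_j - n_j + 2$ cycles --- a short induction on $e_j$, peeling off one transposition and splitting on whether deleting the corresponding edge disconnects the graph (when it does, the peeled transposition necessarily merges two cycles, forcing the cycle count \emph{down}) --- and hence $e_j \ge n_j + r_j - 2$. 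Second, if $W_j \not\equiv 0 \pmod m$ then, because transposition-like reflections carry weight $0$, at least one diagonal reflection must lie in $K_j$, so $d_j \ge 1$. Summing over components yields $\ell \ge n + \cyc(g) - v(\Pi_\Gamma)$, and since $v(\Pi_\Gamma) \le v(\Pi)$ for a maximum partition $\Pi$, we get $\ell \ge n + \cyc(g) - v(\Pi)$; taking $\ell = \lR(g)$ and combining with the upper bound gives the formula.

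I expect the main obstacle to be the lower bound, and within it the passage from an arbitrary factorization to a valid cycle partition together with the tight per-component count: one must check simultaneously that the component weights are $\equiv 0 \pmod p$ (so that $\Pi_\Gamma$ is admissible), that connectivity forces $e_j \ge n_j + r_j - 2$, and that a nonzero component weight forces an extra diagonal reflection. On the upper-bound side, the one point requiring genuine care rather than bookkeeping is the claim that the weight parameters of the spanning-tree reflections, followed by the splitting reflections, surject onto all admissible weight vectors on a part; this is an explicit linear-algebra verification over $\ZZ/m\ZZ$ that I would carry out by a small induction on $k(P)$.
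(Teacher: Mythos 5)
Your proof is correct, but be aware that the paper contains no proof of this statement at all: it is quoted from Shi \cite[Thm.~4.4]{Shi}, and the paper only later extracts consequences of Shi's argument (Proposition~\ref{prop correct number}, whose proof is explicitly ``implicit in the proofs of Lemmas 4.2 and 4.3 and Theorem 4.4 in \cite{Shi}'') and records the matching construction (Remark~\ref{rmk:construct a std factorization}). Your two-bound argument is essentially a self-contained reconstruction of Shi's original proof. Your upper-bound construction --- a spanning tree of $k(P)-1$ transposition-like reflections realizing a long cycle with any prescribed sum-zero weight vector, then $r(P)-1$ splitting transpositions, then one diagonal reflection exactly when $w_P \not\equiv 0 \pmod{m}$ (legitimate as a reflection of $G(m,p,n)$ precisely because the cycle-partition condition gives $w_P \equiv 0 \pmod{p}$) --- yields the same factor count, part by part, as the paper's Remark~\ref{rmk:construct a std factorization}. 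Your lower bound correctly isolates the three facts that make the count tight: component weights of an arbitrary factorization are $\equiv 0 \pmod{p}$ (transposition-like reflections have weight $0$, diagonal reflections have weight in $p\ZZ/m\ZZ$), so the component partition $\Pi_\Gamma$ is an admissible cycle partition and maximality of $\Pi$ applies; connectivity forces $e_j \ge n_j + r_j - 2$; and a nonzero component weight forces $d_j \ge 1$. What your write-up buys over the paper's citation is self-containedness; what the citation buys is that the paper can re-use the finer structural output of Shi's argument (the exact edge and loop counts of Proposition~\ref{prop correct number}), which your inequality-only formulation would need a small extra step to recover. Two minor repairs: in the bridge case of your induction the deleted edge disconnects the graph, so the inductive hypothesis must be invoked on each of the two resulting components --- cleanest is to prove the statement for arbitrary graphs in the form $\cyc(\pi) \le e - n + 2\,\#\{\text{components}\}$; and your notation $r(P)$ for the number of cycles in a part collides with the paper's Definition~\ref{def r star}, where $r(B)$ is a gcd of cycle weights, so one of the two should be renamed before splicing.
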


\begin{example}
\label{eg:shi's formula}
The element $g = [(12)(345)(6) ; (1, 21, 2, 3, 2, 6)]$ of $G(30, 5, 6)$ shown in \eqref{eq:example element} has three cycles, of weights $22$, $7$, and $6$.  The unique partition of the cycles of $g$ in which all parts have weight $0 \pmod{5}$ is the one-part partition $\Pi = \pt{(12)(345)(6)}$, of value $v(\Pi) = 1$.  Therefore $\lR(g) = 6 + 3 - 1 = 8$.

If instead $g = [\id; (2, 2, 2, 2)]$ is the element of $G(4, 4, 4)$ mentioned just before the statement of Theorem~\ref{shi}, then $\Parmax(g)$ contains three partitions, each of value $4$.  Thus for this element $\lR(g) = 4 + 4 - 4 = 4$.
\end{example}

\begin{remark}[Theorems~2.1 and~3.1 in~\cite{Shi}]
\label{rmk:shi special cases}
In the particular cases $p = 1$ and $p = m$, the formula in Theorem~\ref{shi} can be simplified. 

When $p = m$, every part in every cycle partition $\Pi$ must have weight $0\pmod{m}$, so $v_{m}(\Pi) = |\Pi|$ and $v(\Pi)= |\Pi| + v_{m}(\Pi) = 2|\Pi|$. Thus, in $G(m, m, n)$ we have that $\lR(g) = n + \cyc(g)- 2\max_{\Pi \in \Par(g)} |\Pi|$. 

When $p = 1$, every partition of the cycles is a cycle partition.  It is shown below (as part of the proof of Corollary~\ref{cor special}, in Section~\ref{sec:proofs}) that a partition $\Pi$ is maximum if and only if each part contains either a single cycle or two cycles of nonzero weights that sum to $0$.  It follows that in $G(m, 1, n)$ we have $\lR(g)= n - \#\{\text{cycles in } g\text{ of weight } 0\}$.

Upon taking $m = 1$, both formulas correctly collapse to the symmetric group formula $\lR(g) = n - \cyc(g)$.
\end{remark}

\begin{remark}
From a computational perspective, determining the reflection length of an element in $G(m, 1, n)$ is easy -- naively computing cycle weights and counting how many are $0$ (as in Remark~\ref{rmk:shi special cases}) is polynomial time.  Unfortunately, the same is not true in general, or even in $G(m, m, n)$.  In particular, testing whether an element $g \in G(m, m, n)$ has $\lR(g) < n + \cyc(g) - 2$ amounts to asking whether there is any nontrivial subset of the cycle weights that sums to $0$ modulo $m$.  This is known in the literature as the \emph{modular subset sum problem} (see, e.g., \cite{subset sum}).  The non-modular version of this question (in which the cycle weights are integers, rather than integers modulo $m$) was one of Karp's original NP-complete problems (he called in the \emph{knapsack problem}, although it is now more commonly known as the \emph{subset sum problem}) \cite{NP complete}.  There is a standard reduction from the non-modular to modular versions of the question, namely, by setting $m = 2S + 1$ where $S$ is the sum of absolute values of the given collection of numbers; thus modular subset sum is also NP-complete.

(The non-modular version of subset sum appears in computing reflection length in the affine symmetric group \cite[App.~A]{LMPS}, which is a generic cover in a certain sense of the groups $G(m, m, n)$ \cite{ShiGeneric, JBL}.)
\end{remark}

\subsection{The Hurwitz action}

As mentioned in the introduction, it is easy to check that Hurwitz moves satisfy the braid relations
\begin{align*}
\sigma_i \sigma_{i + 1} \sigma_i(t_1, \ldots, t_n) & = \sigma_{i + 1} \sigma_i \sigma_{i + 1}(t_1, \ldots, t_n) && \text{for } i = 1, \ldots, n - 2 \text{ and}\\
\sigma_i \sigma_j(t_1, \ldots, t_n) &= \sigma_j \sigma_i(t_1, \ldots, t_n) && \text{if } {|i - j|} > 1,
\end{align*}
and consequently induce an action of the full braid group.  In particular, the action of the inverse of a generator is
\[
\begin{array}{ccccl}
 \big(t_{1}, \; \ldots, \; t_{i-1}, & t_{i}, & t_{i+1}, & t_{i + 2}, \; \ldots, \; t_{n}\big) & \overset{\sigma_i^{-1}}{\longmapsto}\\
\big(t_{1}, \; \ldots, \; t_{i-1}, & t_{i}t_{i+1}t_{i}^{-1}, & t_{i}, & t_{i+2}, \; \ldots, \; t_{n}\big) &.
\end{array}
\]
We will refer variously to \emph{Hurwitz orbits} (the orbits under the Hurwitz action), \emph{Hurwitz equivalence} (the induced equivalence relation, of belonging to the same orbit), and \emph{Hurwitz paths} (sequences of Hurwitz moves connecting factorizations in the same orbit) when speaking of the orbit structure of this action.

The Hurwitz action was introduced by Hurwitz~\cite{Hur91}, in the context of his study of the coverings of the Riemann sphere with given monodromy.  In that setting, the group $G$ is the symmetric group $\Symm_n$, and the allowed factors are transpositions.  The structure of the Hurwitz orbits on transposition factorizations in $\Symm_n$ was essentially completely analyzed by Kluitmann.
\begin{theorem}[{Kluitmann~\cite[Thm.\ 1]{Kluitmann}}]\label{Kluitmann}
The set of all transposition factorizations $(t_{1}, t_{2}, \ldots, t_{k})$ of an element $w\in \Symm_n$ such that $\langle t_{1}, t_{2},\ldots, t_{k}\rangle = \Symm_{n}$ forms a single orbit under the Hurwitz action.
\end{theorem}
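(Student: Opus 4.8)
The plan is to translate the statement into the language of edge-labeled multigraphs and then induct on $n$. To a tuple $(t_1, \ldots, t_k)$ with $t_j = (a_j\, b_j)$ I associate the multigraph $\Gamma$ on vertex set $\{1, \ldots, n\}$ whose $j$th edge is $\{a_j, b_j\}$; the data of the tuple is exactly $\Gamma$ together with an ordering of its edges. Three quantities are preserved by every Hurwitz move: the product $t_1 \cdots t_k = w$, the number $k$ of factors, and (because a conjugate of a transposition is a transposition, and conjugation does not change the generated subgroup) the subgroup $\langle t_1, \ldots, t_k\rangle$. Since a set of transpositions generates $\Symm_n$ exactly when $\Gamma$ is connected, the hypothesis says precisely that $\Gamma$ is connected throughout the orbit. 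The goal is to show that $w$ and $k$ are complete invariants on connected factorizations, and I would do this by peeling off the vertex $n$ to reduce to $\Symm_{n-1}$, with base cases $n \le 2$ being immediate (for $n=2$ there is a unique factorization of each $w$ of each length $k$).

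The engine of the reduction is the effect of a Hurwitz move on two factors sharing the vertex $n$. Using the convention of the move as stated, for distinct $a, b \ne n$ one computes
\[
\sigma\bigl((a\,n),\,(b\,n)\bigr) = \bigl((b\,n),\; (b\,n)(a\,n)(b\,n)\bigr) = \bigl((b\,n),\,(a\,b)\bigr),
\]
which replaces two edges at $n$ by one, lowering the valence of $n$; disjoint factors simply commute, and for a base edge sharing the pivot one has $\sigma\bigl((a\,c),(a\,n)\bigr) = \bigl((a\,n),(c\,n)\bigr)$, which lets me drag all factors incident to $n$ into consecutive positions. First I would gather every $n$-incident factor together, and then repeatedly apply the displayed move to any two \emph{distinct} edges at $n$. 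This terminates in a normal form $(s_1, \ldots, s_{k-r},\, (a\,n), \ldots, (a\,n))$, where the $s_i$ avoid $n$, the final $r$ factors are all equal to a single edge $(a\,n)$, and $\{s_1, \ldots, s_{k-r}\}$ still spans $\{1, \ldots, n-1\}$ connectedly.

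At the normal form, write $\pi = s_1 \cdots s_{k-r} \in \Symm_{n-1}$, so that $w = \pi\,(a\,n)^r$ and hence $w(n) = \pi\bigl((a\,n)^r(n)\bigr)$. Thus $r$ is even precisely when $w(n) = n$, and odd otherwise; moreover invariance of the generated subgroup forbids $n$ from becoming isolated, so $r \ge 1$ always and $r \ge 2$ when $w$ fixes $n$. In the minimal situation ($r = 1$ with $w(n)\ne n$, or $r = 2$ with $w(n) = n$) the vertex $n$ is attached as a leaf or by a doubled edge, and deleting it leaves a connected factorization of $\pi$ (respectively of $w\vert_{\{1,\dots,n-1\}}$) in $\Symm_{n-1}$ of length $k-1$ (respectively $k-2$). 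The inductive hypothesis then places the base factorization in a single orbit, and I would finish by reassembling.

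The hard part is exactly the two normalizations needed to reach a \emph{canonical} normal form: driving the multiplicity $r$ down to its minimal value and standardizing the attaching vertex $a$. Once the $n$-edges coincide, the basic move stalls, and — as one checks — conjugating an $n$-edge by anything fixing $n$ keeps it incident to $n$, so extra copies cannot simply be deleted. The resolution is to \emph{diversify} before reducing: using a base edge at $a$, the move $\sigma\bigl((a\,c),(a\,n)\bigr) = \bigl((a\,n),(c\,n)\bigr)$ creates a distinct $n$-edge, after which two applications of the engine move trade a pair of $n$-edges for a pair of base edges while preserving $k$, lowering $r$ by $2$ and halting exactly at the connectivity-forced minimum. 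Standardizing $a$ is handled by the ``walking'' relation $\sigma^2\bigl((a\,c),(a\,n)\bigr) = \bigl((c\,n),(a\,c)\bigr)$, which slides the attachment of $n$ along a base edge; chaining these along a path in the spanning base moves the attachment to a prescribed vertex. Verifying that these manipulations can always be carried out compatibly — in particular that walking the attachment and invoking the inductive hypothesis on the base do not conflict — is the technical crux, and I expect it to be where the bulk of the careful bookkeeping lies.
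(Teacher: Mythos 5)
A preliminary remark: the paper does not prove this statement at all --- it imports it wholesale from Kluitmann \cite[Thm.~1]{Kluitmann} --- so your attempt can only be measured against the literature, where the standard arguments (D\'enes, Kluitmann, Ben-Itzhak--Teicher) are indeed graph-theoretic inductions of the kind you sketch. Much of your setup is correct: the multigraph translation, the three Hurwitz invariants, the local computations $\sigma\bigl((a\,n),(b\,n)\bigr)=\bigl((b\,n),(a\,b)\bigr)$, $\sigma\bigl((a\,c),(a\,n)\bigr)=\bigl((a\,n),(c\,n)\bigr)$, and $\sigma^{2}\bigl((a\,c),(a\,n)\bigr)=\bigl((c\,n),(a\,c)\bigr)$, the parity analysis of $r$ via $w(n)$, the observation that when the engine stalls (all $n$-edges equal) connectivity forces the base to span $\{1,\ldots,n-1\}$, and the reassembly (braid generators acting on the first $k-r$ slots do not disturb the trailing block, and induction applies because a connected spanning base generates $\Symm_{n-1}$).

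The gap is exactly where you locate it, but it is worse than deferred bookkeeping: both mechanisms you propose for the crux fail as literally stated. For lowering $r$: from $\ldots,(a\,c),(a\,n),(a\,n),\ldots$ your diversify move gives $\ldots,(a\,n),(c\,n),(a\,n),\ldots$, and the engine applied to $\bigl((c\,n),(a\,n)\bigr)$ returns $\bigl((a\,n),(a\,c)\bigr)$ --- the multiset of factors is back where it started, so ``two applications of the engine move'' trade nothing; the natural short combinations all cycle this way. Indeed a local invariant forbids your recipe: the segment $\bigl((a\,c),(a\,n),(a\,n)\bigr)$ has product fixing $n$ and generates $\Symm_{\{a,c,n\}}$, so every factorization in its segment-orbit retains exactly two $n$-edges; lowering $r$ by $2$ genuinely requires a longer segment and a longer braid word. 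Such words exist --- writing $s=(a\,c)$, $u=(a\,n)$, $v=(c\,n)$, one checks the eight-move chain $(s,u,u,u)\to(u,v,u,u)\to(u,u,s,u)\to(u,s,v,u)\to(u,s,u,s)\to(s,v,u,s)\to(s,u,s,s)\to(s,s,v,s)\to(s,s,s,u)$, which is what actually trades a pair of $n$-edges for a pair of base edges --- but nothing of this kind appears in your argument, and it (not the normal-form setup) is the real content of the theorem. The walking step has the same defect: $\sigma^{2}$ strands the emitted base edge to the \emph{right} of the walked $n$-edge, and both ways of transporting it back across an $n$-edge fail ($\sigma$ undoes the walk, returning $\bigl((a\,c),(a\,n)\bigr)$, while $\sigma^{-1}$ converts the base edge into the $n$-edge $(a\,n)$); when $r\geq 2$, standardizing the attachment and restoring your normal form again needs an explicit ad hoc sequence, e.g.\ $(s,u,u)\to(u,v,u)\to(u,u,s)\to(u,s,v)\to(s,v,v)$. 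So the skeleton is sound and the theorem is provable along your lines, but the proposal omits the decisive computations and asserts in their place recipes that demonstrably do not work.
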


Kluitmann's approach employs a graph-theoretic model for transposition factorizations that goes back to D\'enes \cite{Denes}. This model may be easily extended to the classical families of Coxeter groups, and more generally to $G(m, p, n)$ (as in, for example, \cite{LR, WangShi}); we will make use of it below in Section~\ref{sec:main}.

We end this section by mentioning a few other results on the Hurwitz action related to Bessis's theorem (Theorem~\ref{thm:bessis}) on minimum-length reflection factorizations of Coxeter elements.  (For a detailed discussion of the definition and properties of Coxeter elements in the complex setting, see \cite{RRS}.)  The analogous result was proved for (not necessarily finite) Coxeter groups by Igusa and Schiffler~\cite{IS10}, with a short and self-contained proof in \cite{BDSW}.  This has been extended to arbitrary-length reflection factorizations of Coxeter elements in Coxeter groups (first finite \cite{LR}, then in general \cite{WY19}) and to the infinite family of complex reflection groups \cite{JBL}.  Finally, in \cite{Wegener}, Wegener extended one direction of the main result of \cite{BGRW} to the case of affine Coxeter groups.

\section{The main theorem}
\label{sec:main}

This section is devoted to the proof of our first main theorem.  In order to state it, we need one further piece of terminology.  (The reader may wish to recall our convention for greatest common divisors from Section~\ref{sec:conventions}.)

\begin{definition}\label{def r star}
Let $g\in G(m, p, n)$, let $\Pi$ be a cycle partition of $g$, and let $B$ be a part in $\Pi$. Suppose that the weights of the cycles in $B$ are $(k_{1}, k_{2}, \ldots, k_{|B|})$. Define $r(B)=\gcd(m, k_{1},k_{2}, \ldots, k_{|B|})$.
\end{definition}

\begin{theorem}\label{main theorem}
Given an element $g \in G(m, p, n)$, the number of Hurwitz orbits of shortest reflection factorizations of $g$ is
\[
\sum_{\Pi\in\Parmax(g)} \prod_{B\in \Pi} (r(B))^{|B|-1}.
\]
\end{theorem}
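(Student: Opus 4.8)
The natural approach is to use the Dénes-style graph model mentioned in the text, in which each reflection factorization is encoded by an edge-labeled graph on the vertex set $\{1,\ldots,n\}$, and to show that a shortest factorization is encoded (essentially) by a spanning forest whose components respect a maximum cycle partition $\Pi$. The enumeration should then factor as a sum over $\Pi \in \Parmax(g)$, and within each $\Pi$ as a product over parts $B \in \Pi$; the exponent $|B|-1$ is exactly the number of edges in a spanning tree on $|B|$ vertices, which strongly suggests that the Hurwitz orbit of a factorization is recorded by a ``weight'' assigned to each tree edge, taking values in a group of size $r(B) = \gcd(m, k_1, \ldots, k_{|B|})$.

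\begin{enumerate}
\item First I would set up the graph model carefully: to a factorization $(t_1,\ldots,t_\ell)$ of $g$ I associate the multigraph on $\{1,\ldots,n\}$ whose transposition-like factors $[(i\,j);a]$ give edges $\{i,j\}$ labeled by $a \in \ZZ/m\ZZ$, and whose diagonal factors contribute loops. Using Shi's formula (Theorem~\ref{shi}) I would first pin down the \emph{shape} of any shortest factorization: the number of factors is $\ell_R(g) = n + \cyc(g) - v(\Pi)$ for a maximum $\Pi$, and I would argue that the underlying graph of a shortest factorization must be a forest each of whose connected components spans the entries of a single block of some cycle partition, with diagonal factors appearing only to correct weights in blocks of nonzero weight. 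This identifies the ``combinatorial type'' of a shortest factorization with a choice of $\Pi \in \Parmax(g)$ together with a spanning tree on each block.

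\item Next I would analyze the Hurwitz action \emph{within} a fixed combinatorial type. Hurwitz moves act on the edge-labeled tree by sliding and conjugating edges; the key computation is to identify a complete invariant of the orbit. For transposition-like reflections $[(i\,j);a]$ the conjugation twists the labels $a$ additively, and I expect that the orbit is determined by the edge labels modulo the subgroup $r(B)\ZZ/m\ZZ$ generated (together with $m$) by the cycle weights $k_1,\ldots,k_{|B|}$ in the block $B$. Concretely, I would show that Hurwitz moves let one freely redistribute labels along the tree subject to preserving the cycle weights, so that the residues in $\ZZ/m\ZZ \big/ r(B)\ZZ \cong \ZZ/r(B)\ZZ$ on the $|B|-1$ tree edges form a complete and freely-varying invariant, giving exactly $r(B)^{|B|-1}$ orbits per block.
\end{enumerate}

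\textbf{The main obstacle.} The hard part will be Step~2, and specifically two things. First, I must prove that Hurwitz moves preserve the combinatorial type (so that distinct choices of $\Pi$, or distinct spanning trees that cannot be related, never mix) while simultaneously showing that \emph{any two} spanning trees on the same block \emph{do} get identified by the Hurwitz action, so that the tree shape itself contributes no multiplicity and only the $\ZZ/r(B)$-labels survive. This transitivity-on-trees claim is the analogue of Kluitmann's theorem (Theorem~\ref{Kluitmann}) and will likely require a careful induction, peeling off a leaf of the tree and commuting its edge to a normalized position. Second, I must correctly handle the blocks of \emph{zero} weight versus nonzero weight and the role of the diagonal reflections: the factor $v_m(\Pi)$ in the value $v(\Pi)$ means zero-weight blocks use one fewer reflection, so the bookkeeping of which blocks carry a diagonal correction, and the verification that this does not change the count $r(B)^{|B|-1}$, is where the argument is most delicate. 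I would isolate the single-block case $G(m,p,n)$ with $g$ an $n$-cycle as a lemma, establish the $r(B)^{n-1}$ count there by the induction above, and then assemble the general formula by showing the blocks behave independently.
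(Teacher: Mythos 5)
Your plan contains a genuine structural error at Step 1: the underlying graph of a shortest factorization is \emph{not} a forest whenever some part of the induced cycle partition contains more than one cycle. By Shi's formula (equivalently Proposition~\ref{prop correct number}), a part $B$ containing $c=|B|$ cycles supported on $k$ letters contributes $c+k-2$ non-loop edges, which exceeds the forest bound $k-1$ by exactly $|B|-1$. Those $|B|-1$ extra edges are forced to appear as \emph{pairs of parallel edges} (the ``doubled path'' of Definition~\ref{def doubled path} joining representatives of the cycles), and they are precisely where the multiplicity $r(B)^{|B|-1}$ lives: the free parameters are the pair weights $a_1,\ldots,a_{|B|-1}$ of the doubled path, taken modulo $r(B)$. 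Your forest model cannot produce this count at all, because (as noted in the proof of Lemma~\ref{lem h path for all}) the edge weights of a factorization whose graph is a forest are \emph{uniquely determined} by the product; so under your model each part would contribute a single orbit, in conflict with, e.g., $g=[\id;(2,2,2,2)]\in G(4,4,4)$, which has twelve orbits rather than three. Your later remark that the exponent $|B|-1$ counts edges of a tree on the $|B|$ cycles shows you have the right numerology, but the invariant cannot be carried by single tree edges between cycles --- a single transposition-like factor joining two cycles would merge them in the underlying permutation; one needs the doubled edges to restore the cycle structure, and this is exactly why shortest factorizations are longer than spanning forests.

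There is a second gap on the necessity side: you assert that the residues modulo $r(B)$ form a ``complete invariant'' but propose no mechanism for separating orbits. The paper's sufficiency direction is carried by explicit braids ($\sigma_{2i-1}$, $\tau_{i,j}$, $\gamma_i$ in Propositions~\ref{prop operation one}--\ref{prop operation four}) acting on doubled paths, combined with a gcd/integer-matrix argument (Corollary~\ref{prop m}) showing every congruence modulo $r(B)$ is realizable; something of this nature would be needed to make precise your claim that Hurwitz moves ``freely redistribute labels.'' For necessity, the paper uses the subgroup $G_f$ generated by the factors, which is manifestly Hurwitz-invariant, and computes it: after conjugating by a suitable diagonal element, $G_f \cong G\left(\frac{m}{r},\frac{\gcd(m,d)}{r},n\right)$ (Proposition~\ref{prop:isomorphic to a Gmpn}), whence the diagonal subgroup of $G_f$ detects the pair weights modulo $r(B)$ (Proposition~\ref{prop:same group implies modular condition}). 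Without some such invariant, nothing in your outline rules out that factorizations with distinct residues become equivalent through Hurwitz moves that pass through non-standard intermediate forms. Your reduction to Kluitmann's theorem for the tree-shape part, and the assembly over independent parts of $\Pi$, do match the paper's strategy (Lemma~\ref{lemma any in p} and Proposition~\ref{prop:components commute}), but the two central steps --- the correct standard form with doubled paths, and the $G_f$ invariant --- are missing.
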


\begin{example}
For example, the element $g = [\id; (2, 2, 2, 2)] \in G(4, 4, 4)$ discussed above has twelve Hurwitz orbits of minimum-length reflection factorizations, including $4 = 2^{2 - 1} \times 2^{2 - 1}$ from each of its three maximum cycle partitions.
\end{example}

As an immediate consequence of Theorem~\ref{main theorem}, we will derive the following characterization of elements for which the Hurwitz acts transitively on the shortest reflection factorizations.

\begin{cor}\label{cor transitive}
Let $g\in G(m,p,n)$. The shortest reflection factorizations of $g$ form a single orbit under the Hurwitz action if and only if $\Parmax(g) = \{\Pi\}$ is a singleton set and either ${|B|} = 1$ or $r(B)=1$ for every part $B\in\Pi$.
\end{cor}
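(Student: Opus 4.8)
The plan is to derive this directly from the enumerative formula in Theorem~\ref{main theorem}, since "the Hurwitz action is transitive on shortest factorizations" is exactly the statement that the number of Hurwitz orbits equals $1$. So I would reduce everything to showing that
\[
\sum_{\Pi\in\Parmax(g)} \prod_{B\in \Pi} (r(B))^{|B|-1} = 1
\]
if and only if the stated combinatorial condition holds. The key observation is that every factor $r(B)^{|B|-1}$ is a positive integer: indeed $r(B) = \gcd(m, k_1, \ldots, k_{|B|})$ is at least $1$, and the exponent $|B|-1$ is nonnegative since every part is nonempty. Hence each summand $\prod_{B\in\Pi} r(B)^{|B|-1}$ is a positive integer, and the whole sum is a sum of positive integers indexed by $\Parmax(g)$.

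First I would argue the "only if" direction. A sum of positive integers equals $1$ precisely when there is exactly one summand and that summand equals $1$. The number of summands is $|\Parmax(g)|$, so the sum being $1$ forces $\Parmax(g) = \{\Pi\}$ to be a singleton. It then remains that the single product $\prod_{B\in\Pi} r(B)^{|B|-1}$ equals $1$. Since this is a product of positive integers, it equals $1$ if and only if every factor $r(B)^{|B|-1}$ equals $1$. For each individual part $B$, the factor $r(B)^{|B|-1}$ equals $1$ exactly when either the exponent $|B|-1$ is $0$ (that is, $|B|=1$) or the base $r(B)$ is $1$; this is precisely the condition "$|B|=1$ or $r(B)=1$." This gives the stated necessary condition.

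For the "if" direction I would simply reverse this reasoning: if $\Parmax(g) = \{\Pi\}$ is a singleton and every part $B\in\Pi$ satisfies $|B|=1$ or $r(B)=1$, then each factor $r(B)^{|B|-1}$ equals $1$ (in the first case because the exponent vanishes, in the second because the base is $1$), so the single product equals $1$ and hence the total sum equals $1$. By Theorem~\ref{main theorem} there is exactly one Hurwitz orbit, which is the claimed transitivity.

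This corollary should be essentially immediate, so I do not expect any serious obstacle. The only point requiring care is the elementary number-theoretic fact that each summand is a positive integer and that a sum of positive integers is $1$ exactly when it has a single term equal to $1$; once this is made explicit, the equivalence is a routine unwinding of the two conditions "$|B|=1$" and "$r(B)=1$" against the vanishing of the factor $r(B)^{|B|-1}$.
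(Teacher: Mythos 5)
Your proposal is correct and matches the paper's own proof, which likewise observes that each summand $\prod_{B\in\Pi} r(B)^{|B|-1}$ in Theorem~\ref{main theorem} is a positive integer, so the sum equals $1$ exactly when $\Parmax(g)$ is a singleton and every factor $r(B)^{|B|-1}$ equals $1$, i.e.\ $|B|=1$ or $r(B)=1$ for each part $B$. Your write-up merely spells out the same elementary unwinding in more detail than the paper's two-line argument.
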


We mention two particularly nice special cases.

\begin{cor}\label{cor single cycle}
If $g\in G(m, p, n)$ has a single cycle, then the shortest reflection factorizations of $g$ form a single orbit under the Hurwitz action.
\end{cor}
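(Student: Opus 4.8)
The plan is to read this off immediately from Theorem~\ref{main theorem}. First I would observe that if $g$ has a single cycle then $\cyc(g) = 1$, so the set of cycles of $g$ has exactly one element, say $C_1$. A cycle partition of $g$ is a set partition of this one-element set (subject to the weight condition), and there is only one set partition of a one-element set: the partition $\Pi$ whose unique block is $B = \{C_1\}$. This $\Pi$ is an admissible cycle partition, because its single block has weight equal to the total weight of $g$, which is $\equiv 0 \pmod{p}$ since $g \in G(m, p, n)$. Hence $\Par(g) = \{\Pi\}$, and in particular $\Parmax(g) = \{\Pi\}$ is a singleton set.

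Next I would evaluate the formula of Theorem~\ref{main theorem} on this singleton. The unique block $B$ has size $|B| = 1$, so the exponent $|B| - 1$ equals $0$ and the corresponding factor $(r(B))^{|B| - 1} = (r(B))^{0} = 1$, no matter what value $r(B)$ takes. The product over the blocks of $\Pi$ is therefore $1$, and the sum over $\Parmax(g)$, having a single term, is also $1$. Thus $g$ has exactly one Hurwitz orbit of shortest reflection factorizations, which is the desired conclusion. Equivalently, one may invoke Corollary~\ref{cor transitive} directly: since $\Parmax(g)$ is a singleton whose unique part $B$ satisfies $|B| = 1$, the stated criterion for transitivity is met.

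There is essentially no obstacle here beyond bookkeeping, as the result is a direct specialization of the main theorem. The only point that deserves a moment's attention is the verification that the one-block partition really is an admissible cycle partition; this follows from the defining property that every element of $G(m, p, n)$ has weight divisible by $p$, as noted just before the statement of Theorem~\ref{shi}.
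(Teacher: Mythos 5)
Your proof is correct and follows essentially the same route as the paper, which also observes that a single cycle forces a unique cycle partition with one singleton part and then invokes Corollary~\ref{cor transitive}. Your extra check that the one-block partition satisfies the weight condition (since $\wt(g) \equiv 0 \pmod{p}$) is a sound bit of bookkeeping the paper leaves implicit.
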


\begin{cor}\label{cor special}
Let $g\in G(m,1,n)$.  Then the shortest factorizations of $g$ form a single orbit under the Hurwitz action if and only if $g$ does not have two cycles of nonzero weight whose weights sum to $0\pmod{m}$.
\end{cor}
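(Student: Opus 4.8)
The plan is to deduce the corollary from the transitivity criterion in Corollary~\ref{cor transitive}, after first pinning down the structure of $\Parmax(g)$ in the special case $p = 1$ (this is the fact promised in Remark~\ref{rmk:shi special cases}). Since $p = 1$, the divisibility condition defining cycle partitions is vacuous, so \emph{every} set partition of the cycles of $g$ is a cycle partition, and we are free to optimize $v(\Pi) = |\Pi| + v_m(\Pi)$ over all of them. Write the cycles of $g$ as $C_1, \ldots, C_c$ (so $c = \cyc(g)$) with weights $w_1, \ldots, w_c \in \ZZ/m\ZZ$, let $Z$ denote the set of cycles of weight $0$, and call a two-element set $\{C_i, C_j\}$ of distinct cycles a \emph{good pair} if $w_i \neq 0$, $w_j \neq 0$, and $w_i + w_j \equiv 0 \pmod{m}$. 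These good pairs are exactly the size-two parts allowed by the claimed description of maximum partitions.

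First I would establish that $\Parmax(g)$ consists precisely of those partitions all of whose parts are either singletons or good pairs. Each part $B$ contributes $1$ to $|\Pi|$ and an additional $1$ to $v_m(\Pi)$ exactly when $\wt(B) \equiv 0 \pmod m$, so its contribution to $v(\Pi)$ is $2$ if $\wt(B) \equiv 0$ and $1$ otherwise. I would compare this contribution against the ``budget'' $|B| + |B \cap Z|$ supplied by its constituent cycles, observing that $\sum_{B \in \Pi}\bigl(|B| + |B \cap Z|\bigr) = c + |Z|$ for every partition $\Pi$. The key per-part inequality is that this contribution is at most $|B| + |B \cap Z|$: when $|B| = 1$ both sides agree, and when $|B| \ge 2$ the contribution is at most $2 \le |B| \le |B| + |B \cap Z|$. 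Summing gives $v(\Pi) \le c + |Z|$, and tracking equality shows it holds exactly when every part of size at least $2$ has $\wt(B) \equiv 0$ and $|B| + |B \cap Z| = 2$ --- that is, when every such part consists of two nonzero-weight cycles whose weights cancel, i.e.\ a good pair. Hence the maximum value is $c + |Z|$, attained precisely by the partitions into singletons and good pairs (and, as a byproduct, $\lR(g) = n + c - (c + |Z|) = n - |Z|$, recovering the formula in Remark~\ref{rmk:shi special cases}).

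With this description in hand, the corollary follows quickly from Corollary~\ref{cor transitive}, which requires both that $\Parmax(g)$ be a singleton set and that every part $B$ of the unique maximum partition satisfy $|B| = 1$ or $r(B) = 1$. If $g$ has no two cycles of nonzero weight summing to $0 \pmod m$, then there are no good pairs, so the \emph{only} partition into singletons and good pairs is the all-singletons partition; thus $\Parmax(g)$ is a singleton all of whose parts are singletons, both conditions hold, and the action is transitive. Conversely, if some good pair $\{C_i, C_j\}$ exists, then the all-singletons partition and the partition having $\{C_i, C_j\}$ as its only non-singleton part are both maximum and are distinct, so $|\Parmax(g)| \ge 2$ and the first hypothesis of Corollary~\ref{cor transitive} already fails; hence the action is not transitive.

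The only real content lies in the combinatorial characterization of $\Parmax(g)$ in the second paragraph; once the per-part inequality and its equality case are verified, the two implications are immediate bookkeeping against Corollary~\ref{cor transitive}, and one does not even need to examine the $r(B)$ values in the non-transitive direction. I therefore expect the equality-case analysis --- confirming that a part of size at least two can meet the bound only by being a good pair --- to be the crux, although it is elementary.
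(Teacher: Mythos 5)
Your proposal is correct and follows essentially the same route as the paper: both proofs first establish that for $p=1$ the maximum cycle partitions are exactly those whose parts are singletons or pairs of nonzero-weight cycles with cancelling weights, and then read off both directions from Corollary~\ref{cor transitive} (noting, as you do, that the $r(B)$ condition is only needed in the transitive direction, where all parts are singletons). The only difference is cosmetic: you prove the characterization via a global per-part bound $v(\Pi)\le \cyc(g)+\#\{i: k_i=0\}$ with an equality analysis, while the paper argues locally that splitting any part of size at least three, or any size-two part that is not such a pair, strictly increases the value.
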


We divide the proof into four steps: first, we define a standard form of factorizations and show that every factorization is Hurwitz-equivalent to a factorization in standard form; next, we present a sufficient condition for two factorizations in standard form to be Hurwitz equivalent, by explicitly constructing a Hurwitz path between them; then we show that this condition is also necessary, by constructing an invariant that distinguishes Hurwitz orbits; and lastly, we conclude with the proof of Theorem~\ref{main theorem} and its corollaries.

\subsection{Standard forms}

In this section, we define a standard form for factorizations in $G(m, p, n)$ and show that every shortest factorization is Hurwitz-equivalent to a factorization in standard form.  It is easiest to describe the standard form in terms of a graph object associated to factorizations.

\begin{definition}\label{def graph}
Given a reflection factorization $f = (t_{1}, \ldots,  t_{\ell})$ of an element $g = t_1 \cdots t_\ell \in G(m,p,n)$, the \emph{factorization graph} of $f$ is the graph $\Gamma_f = (V,E)$ on (labeled) vertex set $V=\{1, \dots, n\}$ with (labeled) edges $E = \{e_1, \ldots, e_\ell\}$ defined as follows: if $t_k$ has underlying permutation $(i\, j)$ then $e_k$ joins vertices $i$ and $j$, while if $t_k$ is diagonal with nonzero weight in position $i$ then $e_k$ is a loop at vertex $i$.
\end{definition}

\begin{example}\label{example std}
Consider the element $g = 
[(12)(345)(6); (1, 21, 2, 3, 2, 6)]$ in $G(30, 5, 6)$ from Example~\ref{eg:shi's formula}, with factorization
\begin{multline}
\label{eq:example factorization}
f = \Big([(13); 1], \; [(13); 23], \; [(36); 0], \; [(36); 29], \\
 [\id; (0,0,0,0,0,5)], \; [(12); 1], \; [(34); 2], \; [(45); 3]\Big).
\end{multline}

The factorization graph $\Gamma_f$ is 
\begin{center}
 \begin{tikzpicture}[-,>=stealth',auto,node distance=1.2cm,
  thick,main node/.style={circle,draw, font=\sffamily\small\bfseries}]

  \node[main node] (1) {$1$};
  \node[main node] (2) [right of=1] {$2$};
  \node[main node] (3) [right of=2] {$3$};
  \node[main node] (4) [right of=3] {$4$};
  \node[main node] (5) [right of=4] {$5$};
  \node[main node] (6) [right of=5] {$6$};
  
  \path[every node/.style={font=\sffamily\small}]
    (1) edge node [above] {\tiny 6} (2)
    (1) edge[bend left] node [above] {\tiny 1} (3)
    (1) edge[bend right] node [below] {\tiny 2} (3)
    (3) edge[bend left] node [above] {\tiny 3} (6)
    (3) edge[bend right] node [below] {\tiny 4} (6)
    (3) edge node [above] {\tiny 7} (4)
    (4) edge node [above] {\tiny 8} (5)
    (6) edge [out=370, in=300, looseness=10] node [right] {\tiny 5} (6);
 \end{tikzpicture} \raisebox{.25in}{.}
\end{center}
\end{example}

Fix an element $g$ in $G(m, p, n)$ and a reflection factorization $f = (t_1, \ldots, t_\ell)$ of $g$.  The connected components of the graph $\Gamma_f$ form a set partition of $\{1, \ldots, n\}$.  In fact, this set partition corresponds to a cycle partition of $g$: the action of each transposition $\pi(t_i)$ respects the set partition, so $\pi(g)$ must as well, and consequently all elements of a single cycle belong to the same part.  We denote by $\Pi_f$ the cycle partition induced in this way by the factorization $f$.  If $B$ is a part in $\Pi_f$, we denote by $\Gamma_f|_B$ the connected component of $\Gamma_f$ that corresponds to $B$ and by $f|_B$ the subsequence of $f$ whose factors correspond to the edges in $\Gamma_f|_B$.

The next result is a trivial observation concerning the relationship between the factors in different connected components of $\Gamma_f$; its proof is left to the reader as an exercise.

\begin{prop}
\label{prop:components commute}
Suppose that $f$ is a reflection factorization of an element $g \in G(m, p, n)$, and that $B \neq B'$ are parts in the cycle partition $\Pi_f$.  Then every reflection in $f|_B$ commutes with every reflection in $f|_{B'}$.  
Moreover, the product $g|_B$ of the factors in $f|_B$ has the following form: its nontrivial cycles are the cycles in $B$, and the weight of $j$ in $g|_B$ agrees with its weight in $g$ if $j$ is in the support of $B$, and is $0$ otherwise.
\end{prop}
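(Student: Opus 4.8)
The plan is to exploit the fact that the reflections in $f|_B$ and $f|_{B'}$ act on disjoint sets of coordinates. First I would record the elementary observation that each reflection $t_k$ appearing in $f$ has \emph{support} — the set of coordinates on which it acts nontrivially — contained in the vertex set of its edge $e_k$: a transposition-like reflection $[(i\,j);a]$ acts only on coordinates $i$ and $j$, while a diagonal reflection realized as a loop at $i$ acts only on coordinate $i$. Since the edges indexing $f|_B$ all lie in the connected component $\Gamma_f|_B$, whose vertex set is exactly the support of $B$ (the union of the entries of the cycles in $B$), every reflection of $f|_B$ is supported inside the support of $B$, and likewise every reflection of $f|_{B'}$ is supported inside the support of $B'$. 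Because $B$ and $B'$ are distinct parts of $\Pi_f$, these two supports are disjoint, and two monomial transformations that act nontrivially only on disjoint coordinate sets commute. This proves the first claim.

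For the second claim I would first use this commutativity to regroup the factorization by connected component: since every factor of $f|_B$ commutes with every factor of every $f|_{B'}$ with $B' \neq B$, the factors may be collected component-by-component to give $g = \prod_{B \in \Pi_f} g|_B$, where the elements $g|_B$ pairwise commute and the internal order within each $g|_B$ is preserved. Each $g|_B$ is a product of reflections supported inside the support of $B$, hence $g|_B$ itself is supported there: its underlying permutation fixes every coordinate outside the support of $B$, and its weight at each such coordinate is $0$. This already yields the ``$0$ otherwise'' half of the weight statement, and shows that the nontrivial cycles of $\pi(g|_B)$ are confined to the support of $B$.

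The remaining point is to match $g|_B$ with $g$ on the support of $B$. Here I would isolate a short computation in the wreath product $G(m,1,n)$: if $x = [u;\mathbf{a}]$ and $y = [v;\mathbf{b}]$ are supported on disjoint coordinate sets $S$ and $T$, then the multiplication rule $[u;\mathbf{a}]\cdot[v;\mathbf{b}] = [uv;(a_{v(1)}+b_1,\ldots,a_{v(n)}+b_n)]$ gives, for $j \in S$, that $v(j)=j$ and $b_j = 0$, so the weight of $j$ in $xy$ equals $a_j$; symmetrically for $j \in T$. Applying this inductively to $g = \prod_{B} g|_B$ shows that for $j$ in the support of $B$ the weight of $j$ in $g$ coincides with the weight of $j$ in $g|_B$, and that the nontrivial cycles of $\pi(g)$ supported on $B$ are precisely the cycles of $\pi(g|_B)$ — that is, the cycles in $B$.

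The argument is entirely routine; the only point requiring a little care is the bookkeeping in the wreath-product multiplication, where weights are not simply added but are twisted by the underlying permutation. The disjoint-support hypothesis is exactly what neutralizes this twisting (it forces $v(j)=j$ in the relevant range), so no genuine obstacle arises.
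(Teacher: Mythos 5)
Your proof is correct, and since the paper explicitly leaves this proposition as an exercise (``a trivial observation\dots its proof is left to the reader''), there is no written proof to diverge from; your argument---reflections supported on the disjoint vertex sets of distinct components of $\Gamma_f$ commute, the factorization regroups component-by-component with internal order preserved, and the wreath-product multiplication rule with $v(j)=j$ neutralizing the twist gives the weight and cycle claims---is exactly the intended routine verification.
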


Next, we give a more detailed description of the structure of the graphs that come from minimum-length factorizations.

\begin{proposition}\label{prop correct number}
Suppose that $f$ is a shortest factorization of an element $g \in G(m, p, n)$, and let $\Pi_f$ be the cycle partition induced by the factorization graph $\Gamma_f$.  Then $\Pi_f$ is a maximum cycle partition of $g$.

Moreover, if $B$ is a part of $\Pi_f$ that contains $c = |B|$ cycles of $g$, supported on a total of $k$ elements of $\{1, \ldots, n\}$, then $\Gamma_f|_B$ contains $c+k-2$ edges that join distinct vertices and either one loop (if the weight of $B$ in $g$ is not $0$) or no loops (if the weight of $B$ is $0$).
\end{proposition}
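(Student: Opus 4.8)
The plan is to prove the first statement by combining Shi's formula (Theorem~\ref{shi}) with a careful count of edges in the factorization graph $\Gamma_f$, and then to extract the detailed edge/loop count for each part as a consequence. The key structural input is that the reflection length $\ell = \lR(g) = n + \cyc(g) - v(\Pi)$ for a \emph{maximum} partition $\Pi$, while any cycle partition $\Pi'$ satisfies $v(\Pi') \le v(\Pi)$. Since $f$ is shortest, it has exactly $\ell$ factors, so $\Gamma_f$ has exactly $\ell$ edges (counting loops). My strategy is to show that $\Gamma_f$ must have \emph{at least} $n + \cyc(g) - v(\Pi_f)$ edges, where $v(\Pi_f)$ is the value of the partition \emph{induced} by $f$; combined with the shortest hypothesis and the inequality $v(\Pi_f) \le v(\Pi)$, this will force $v(\Pi_f) = v(\Pi)$, i.e.\ $\Pi_f$ is maximum, and simultaneously pin down the edge counts.

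First I would reduce to a single part using Proposition~\ref{prop:components commute}: the factors in distinct components commute and the product $g|_B$ along each part $B$ is a well-defined element of $G(m,p,n)$ supported on $B$, so $\ell = \sum_{B \in \Pi_f} (\text{number of edges in } \Gamma_f|_B)$. It therefore suffices to bound below the edge count of each connected component $\Gamma_f|_B$ in terms of the local data $c = |B|$ (number of cycles) and $k$ (total support size). The two controlling facts I would establish for a single connected component are: (i) connectivity on $k$ vertices forces at least $k-1$ non-loop edges; and (ii) the underlying permutation $\pi(g|_B)$, which is a product of $c$ cycles on these $k$ points, requires the non-loop edges to realize enough ``merges'' — the transpositions must generate a transitive action producing exactly $c$ cycles, so a spanning-tree-plus-cycles count gives at least $(k - c) + \text{(extra)}$ structure. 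More precisely, I would argue that producing a permutation with $c$ cycles on $k$ points from transpositions whose graph is connected requires at least $k - 1$ transpositions, with equality only when the graph is a tree (giving a single cycle, $c=1$); in general $c$ cycles on a connected support of size $k$ needs at least $(k-1) + (c-1) = c + k - 2$ transposition-edges. The loop accounts for the weight: if the weight of $B$ is nonzero, the non-loop (weight-carrying transposition-like) reflections and diagonal conjugates can only adjust weights within the subgroup they generate, and Shi's value term $v_m$ detects exactly whether a part has weight $0$, so a nonzero-weight part needs one extra generator — realized by a loop — contributing the ``$+1$'' that matches $v(\Pi_f) = |\Pi_f| + v_m(\Pi_f)$.

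Assembling these, each component contributes at least $c + k - 2 + \epsilon_B$ edges, where $\epsilon_B = 1$ if the weight of $B$ is nonzero and $0$ otherwise. Summing over parts and using $\sum_B c = \cyc(g)$, $\sum_B k = n$, and $\sum_B \epsilon_B = |\Pi_f| - v_m(\Pi_f)$ gives a lower bound of $n + \cyc(g) - 2|\Pi_f| + (|\Pi_f| - v_m(\Pi_f)) = n + \cyc(g) - v(\Pi_f)$ for the total edge count $\ell$. Since $\ell = \lR(g) = n + \cyc(g) - v(\Pi) \le n + \cyc(g) - v(\Pi_f)$, the inequality must be an equality throughout; hence $v(\Pi_f) = v(\Pi)$, so $\Pi_f \in \Parmax(g)$, and every per-part bound is tight, yielding exactly $c + k - 2$ non-loop edges and exactly $\epsilon_B$ loops in $\Gamma_f|_B$. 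I expect the main obstacle to be step (ii): cleanly justifying that a connected transposition graph producing a permutation with $c$ cycles on $k$ vertices needs at least $c + k - 2$ edges, and that the weight-$0$ versus weight-nonzero dichotomy corresponds exactly to zero versus one loop. The permutation count is essentially the genus/Hurwitz-length estimate (every transposition changes the number of cycles by $\pm 1$, so building $c$ cycles on $k$ points needs the graph to first connect — $k-1$ edges for a tree giving one cycle — then to split off $c-1$ further cycles, each costing one more edge), but phrasing it as a clean lower bound independent of the realized factorization, and handling the interaction with diagonal reflections and weights via Shi's formula, is where the care is needed.
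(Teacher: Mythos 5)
Your argument is essentially correct, but you should know that the paper itself does not prove this proposition: its entire ``proof'' is the one-line citation that the result is implicit in the proofs of Lemmas~4.2 and~4.3 and Theorem~4.4 of Shi's paper \cite{Shi}. So you have in effect reconstructed, self-containedly, the edge-counting that underlies Shi's reflection-length formula, with one logical inversion: where Shi establishes the per-component structure \emph{in order to prove} the formula $\lR(g) = n + \cyc(g) - v(\Pi)$, you take that formula (Theorem~\ref{shi}, already quoted in the paper) as a black box and run a sandwich argument --- each part $B$ contributes at least $(c_B + k_B - 2) + \epsilon_B$ edges, the sum of these bounds is $n + \cyc(g) - v(\Pi_f) \geq n + \cyc(g) - v(\Pi_{\max}) = \lR(g) = \#f$, and equality is forced termwise, yielding simultaneously that $\Pi_f$ is maximum and the exact edge and loop counts. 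That structure is sound, and it buys a self-contained proof where the paper outsources the work; the cost is redoing Shi's counting. Two steps you flagged do need tightening, and here is how. For the bound $t \geq k + c - 2$ on the number of non-loop edges in a connected component, replace the ``first connect, then split'' phrasing (the factors come in an arbitrary order) by an order-free incremental count: process the factors of $f|_B$ left to right, tracking both the connected components of the graph built so far and the cycles of the partial product; loops and diagonal factors change neither, exactly $k - 1$ of the transpositions join two distinct components (independently of order), and each of those necessarily also merges two cycles, since cycles of the partial product refine components; so if $m$ and $s$ count the within-component cycle-merges and cycle-splits, then $(k-1) + m - s = k - c$, whence $s \geq s - m = c - 1$ and $t = (k-1) + m + s \geq k + c - 2$. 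For the loop count, your appeal to the $v_m$ term of Shi's formula is vaguer than needed and sounds circular; the correct and elementary observation is that weight is additive under multiplication in $G(m,1,n)$ and every transposition-like reflection has weight $0$, so $\wt(B) = \wt(g|_B)$ equals the sum of the loop weights, forcing at least one loop when $\wt(B) \neq 0$ (this same additivity, together with the fact that every reflection of $G(m,p,n)$ has weight $\equiv 0 \pmod p$, is also what makes $\Pi_f$ a genuine cycle partition, so that $v(\Pi_f) \leq v(\Pi_{\max})$ is legitimate). With those two repairs your proposal is a complete and correct proof.
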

\begin{proof}
The result is implicit in the proofs of Lemmas~4.2 and~4.3 and Theorem~4.4 in~\cite{Shi}.
\end{proof}

\begin{example}
Consider the factorization graph $\Gamma_f$ in Example~\ref{example std}. Since $\Gamma_f$ has a single connected component, the cycle partition $\Pi_f$ has a single part that contains all three cycles in $g$, with $c = 3$ and $k = 6$.  In $\Gamma_f$, there are $3+6-2 = 7$ edges joining distinct vertices.  Since the weight of the element being factored is not $0$, there is exactly one loop, for a total of eight edges.  This is the same as the reflection length of $g$ (as computed in Example~\ref{eg:shi's formula}).
\end{example}

\begin{remark}
\label{rmk:construct a std factorization}
Given any element $g \in G(m, p, n)$ and any cycle partition $\Pi$ of $g$, it is easy to construct a factorization $f$ such that $\Pi_f = \Pi$, as follows:
choose a part $B$ in $\Pi$, and suppose that $B = \{C_1 = (v_{1, 1} \cdots), \ldots, C_c = (v_{c, 1} \cdots)\}$ contains $c$ cycles, whose weights in $g$ are $k_1, \ldots, k_c$.  Begin with a product of $2(c - 1)$ transposition-like factors 
\begin{multline*}
[(v_{1, 1} \, v_{2, 1}); a_1] \cdot [(v_{1, 1} \, v_{2, 1}); a_1 + k_1] \cdot [(v_{2, 1} \, v_{3, 1}); a_2] \cdot [(v_{2, 1} \, v_{3, 1}); a_2 + k_1 + k_2] \cdots
\\
\cdots [(v_{c - 1, 1} \, v_{c, 1}); a_{c - 1}] \cdot [(v_{c - 1, 1} \, v_{c, 1}); a_{c - 1} + k_1 + \ldots + k_{c - 1}]
\end{multline*}
for arbitrary $a_i \in \ZZ/m\ZZ$.
If the weight of $B$ is nonzero, multiply on the right by a diagonal reflection of weight $\wt(B)$ in position $v_{c, 1}$.  The resulting product is a diagonal element in which the weight of $v_{i, 1}$ is $k_i$ for $i = 1, \ldots, c$ and the other weights are $0$.  Next, for each cycle $C_i$, use a tree of $|C_i| - 1$ transposition-like factors to form the cycle; the weights are uniquely determined by the sequence of underlying transpositions and the requirement that the product be the (correctly weighted) cycle in $g$.  Then do the same for the other parts of $\Pi$.
\end{remark}

Inspired by the construction of Remark~\ref{rmk:construct a std factorization}, we are now prepared for the key definition of this section.

\begin{definition}\label{def std form}
Given a shortest reflection factorization $f$ of an element $g\in G(m,p,n)$, with factorization graph $\Gamma_f$ and cycle partition $\Pi_f$.  Let $B_1$, $B_2$, \ldots, $B_{|\Pi_f|}$ be the parts of $\Pi_f$.  For each part $B_i \in \Pi_f$, let $C_{i, 1}$, \ldots, $C_{i, |B_i|}$ be the cycles of $g$ contained in $B_i$ and for each $j$, let $v_{i,j}$ be the smallest element of $\{1, \ldots, n \}$ permuted by $C_{i,j}$.  Without loss of generality, assume that the indices of the $B_i$ and $C_{i, j}$ have been chosen so that $v_{i,1} < v_{i,2} < \ldots < v_{i, |B_i|}$ and $v_{1, 1} < v_{2, 1} < \ldots < v_{|\Pi_f|, 1}$.  We say that $f$ is in \emph{standard form} if the following conditions are met:
\begin{enumerate}[(a)]
\item if $i' < i$ then every factor in $f|_{B_{i'}}$ precedes all factors in $f|_{B_i}$;
\item for each $i$, the first $2|B_i|-2$ factors $t_1, \ldots, t_{2|B_i| - 2}$ in $f|_{B_i}$ have underlying transpositions
\begin{align*}
\pi(t_{1}) & = \pi(t_{2}) = (v_{i,1} \, v_{i,2}),\\
\pi(t_{3}) & = \pi(t_{4}) = (v_{i,2} \, v_{i,3}),\\
& \vdots\\
\pi(t_{2|B_i|-3}) & = \pi(t_{2|B_i| - 2}) = (v_{i, |B_i| - 1} \, v_{i,|B_i|});
\end{align*}
and
\item for each $i$, if there is a loop in $\Gamma_f|_{B_i}$ then it is $e_{2|B_i| - 1}$ and is incident to vertex $v_{i, |B_i|}$.
\end{enumerate}
\end{definition}

\begin{example}\label{example non std}
Once again consider the factorization $f$ from Example~\ref{example std}. It is a shortest factorization since $f$ has $\lR(g)=8$ factors. The factorization graph $\Gamma_f$ has only one component, corresponding to a cycle partition in which  with three cycles $(12)$, $(345)$, and $(6)$ are all placed in the same part.  Thus condition (a) of the definition is satisfied vacuously.  By definition, $v_{1, 1} = 1$, $v_{1, 2} = 3$ and $v_{1, 3} = 6$ are the smallest elements in their cycles.  The first two factors in $f$ have underlying permutation $(v_{1, 1}\, v_{1, 2}) = (13)$ and the next two factors have underlying permutation $(v_{1, 2}\, v_{1, 3}) = (36)$, so condition (b) is satisfied.  There is a loop, and it is the fifth factor and is attached to vertex $v_{1, 3} = 6$, so condition (c) is satisfied. Therefore, $f$ is in standard form.

\end{example}


By Proposition~\ref{prop:components commute}, condition (a) in Definition~\ref{def std form} is ``for free'': we can commute factors from different components past each other without affecting conditions (b) and (c).
The next proposition spells out in great detail the structure of factorizations in standard form.

\begin{prop}\label{prop correct form}
Let $g\in G(m, p,n)$ with a standard form factorization $f$ and associated graph $\Gamma_f$ and partition $\Pi_f$. For each part $B$ of $\Pi_f$, the connected component $\Gamma_f|_{B}$ has the following structure: (1) it contains a sequence of vertices $v_{1}$, \ldots, $v_{|B|}$ such that $e_{2i - 1}$ and $e_{2i}$ join $v_{i}$ to $v_{i + 1}$; (2) if it contains a loop, the loop is $e_{2|B| - 1}$ and is attached to vertex $v_{|B|}$; and (3) removing the edges in (1) and (2) from $\Gamma_f|_B$ leaves a forest with $|B|$ components, each of which contains exactly one of the vertices $v_1, \ldots, v_{|B|}$.
\end{prop}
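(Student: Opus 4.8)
The plan is to decode the definition of standard form (Definition~\ref{def std form}) into the structural claims about $\Gamma_f|_B$, using the edge count from Proposition~\ref{prop correct number} as the main bookkeeping device. Conditions (b) and (c) of the standard form definition directly hand us parts (1) and (2): writing $v_j := v_{i,j}$ for the cycle-minima inside the fixed part $B = B_i$, condition (b) says that the first $2|B| - 2$ factors come in consecutive pairs with underlying transpositions $(v_j\, v_{j+1})$, which is exactly the statement that $e_{2j-1}$ and $e_{2j}$ both join $v_j$ to $v_{j+1}$, and condition (c) says that a loop, if present, is the edge $e_{2|B|-1}$ attached to $v_{|B|}$. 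So the first two claims require essentially no work beyond transcription.

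The substance is in part (3). First I would invoke Proposition~\ref{prop correct number} to pin down the total edge count in $\Gamma_f|_B$: if $B$ contains $c = |B|$ cycles supported on $k$ vertices, there are $c + k - 2$ non-loop edges, plus one loop precisely when $\wt(B) \neq 0$. The edges enumerated in (1) account for $2(|B| - 1) = 2c - 2$ non-loop edges, and those in (2) account for the loop when present. Subtracting, the edges that remain after deleting the edges of (1) and (2) number $(c + k - 2) - (2c - 2) = k - c$. I would then argue that the subgraph $H$ on the $k$ support-vertices of $B$ formed by these $k - c$ remaining edges is a forest with exactly $c = |B|$ components, each containing exactly one of $v_1, \ldots, v_{|B|}$.

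The cleanest route to this is a connectivity-plus-counting argument. The full component $\Gamma_f|_B$ is connected on its $k$ vertices. The edges in (1) already connect $v_1, \ldots, v_{|B|}$ into a single path, so collapsing those vertices, the remaining $k - c$ edges must connect the other $k - c$ vertices into this structure so that the whole thing stays connected; a connected graph on $k$ vertices has at least $k - 1$ edges, and here the non-loop edges number $c + k - 2 = (2c - 2) + (k - c)$, which after removing the $2c-2$ doubled path-edges leaves exactly enough to attach each remaining vertex without creating any cycle. Concretely, I expect to argue that $H$ has $k$ vertices and $k - c$ edges; since $\Gamma_f|_B$ is connected and the doubled edges of (1) connect the $v_j$'s together, $H$ must at minimum connect each of the $k - c$ non-minimal vertices to the $\{v_1,\dots,v_{|B|}\}$ cluster, and a count of $k-c$ edges on $k$ vertices is precisely the forest count $k - (\text{number of components})$ forcing exactly $c$ components, hence $H$ is acyclic (a forest) with each component containing exactly one $v_j$.

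I anticipate the main obstacle is making the ``each component contains exactly one $v_j$'' claim airtight rather than just plausible from the count. The count $k - c$ edges on $k$ vertices is consistent with $c$ components only if $H$ is genuinely acyclic; if $H$ had a cycle, it would have fewer than $c$ components, contradicting that every vertex in a component of $H$ lies in some cycle $C_{i,j}$ of $g$ and that distinct cycles are separated once the path-edges are removed. The careful step is to show that deleting the $2c-2$ path-edges genuinely disconnects the $c$ cycles from each other: here I would lean on the fact that the $v_j$ are the cycle-minima and that the underlying permutation structure (via Proposition~\ref{prop:components commute} and Proposition~\ref{prop correct number}) forces each cycle $C_{i,j}$ of $g$ to be assembled by a tree of transposition-like factors rooted near $v_j$, so that no remaining edge crosses between two different cycles. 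Establishing this separation cleanly — that the only edges joining distinct cycles are the doubled path-edges of (1) — is the crux, and I would handle it by observing that any non-path edge joining two vertices in different cycles would, together with the path, create an extra cycle in $\Gamma_f|_B$ and push the edge count above $c + k - 2$, contradicting Proposition~\ref{prop correct number}.
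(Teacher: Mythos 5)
Your handling of parts (1) and (2) coincides with the paper's (both are immediate transcriptions of Definition~\ref{def std form}(b),(c)), and your edge bookkeeping for part (3) --- exactly $k-c$ edges remain after deleting the $2c-2$ doubled-path edges and the possible loop --- is the same use of Proposition~\ref{prop correct number} the paper makes. Where you genuinely diverge is the upper bound on the number of components of the leftover graph $H$. The paper argues algebraically: since $\pi(t_{2i-1}t_{2i})=\id$ for each doubled pair and the loop projects to the identity, the remaining factors project to a transposition factorization of $\pi(g|_B)$, so the support of each of the $c$ cycles lies inside a single component, giving at most $c$ components --- and, in fact, that each component is \emph{exactly} a cycle support. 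You instead argue purely graph-theoretically: every non-loop edge outside $H$ has both endpoints in $\{v_1,\ldots,v_c\}$, so connectivity of $\Gamma_f|_B$ forces every component of $H$ to meet that set (take a path from any vertex to $v_1$ and truncate at its first visit to the set --- worth writing out, but routine). Either way, combined with the lower bound $\text{components} \geq \text{vertices} - \text{edges} = c$, pigeonhole gives exactly $c$ components with one $v_j$ each, and equality in $\text{edges} = \text{vertices} - \text{components}$ forces a forest. Your route is more elementary, never using that the remaining factors multiply to something with underlying permutation $\pi(g|_B)$; the paper's route buys the stronger conclusion that components are precisely cycle supports, a fact it leans on again in the proof of Proposition~\ref{prop:isomorphic to a Gmpn}. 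Your middle paragraph is thus a complete and correct proof of (3) as stated.

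Your final paragraph, however, misdiagnoses the situation and then patches the non-problem with an invalid argument. The ``crux'' you name there --- that no remaining edge joins two different cycles of $g$ --- is not needed for statement (3), which your counting argument has already established. And the justification you sketch is wrong as stated: a cross-cycle edge would not ``push the edge count above $c+k-2$''; Proposition~\ref{prop correct number} fixes the number of non-loop edges at exactly $c+k-2$ no matter how they are arranged, and the presence of a cycle in a graph does not by itself increase its edge count. If you did want the separation claim, your graph-theoretic framework alone cannot see it; you would need the paper's algebraic input (remaining factors project to a transposition factorization of $\pi(g|_B)$, so each cycle support lies in one component; with exactly $c$ components, $c$ cycles, and all $k$ vertices covered, each component is exactly one cycle support, so no edge crosses cycles). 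The cleanest fix is simply to delete that last paragraph.
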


\begin{proof}
Parts (1) and (2) follow immediately from parts (b) and (c) of Definition~\ref{def std form}, respectively.  We now consider part (3).

Let $f|_B = (t_1, t_2, \ldots)$ be the subfactorization of $f$ whose reflections correspond to edges in $\Gamma_f|_B$.  By part (a) of Definition~\ref{def std form}, this is a consecutive subsequence of $f$.  By Proposition~\ref{prop:components commute}, its product $g|_B$ has underlying permutation whose nontrivial cycles are the cycles in $B$, and whose weights agree with $g$ on the support of $B$ and are $0$ otherwise.  

By part (1), for $i = 1, \ldots, |B| - 1$ we have that the underlying permutations $\pi(t_{2i - 1})$ and $\pi(t_{2i})$ are both equal to the transposition $(v_i \, v_{i + 1})$.  Consequently, the underlying permutation $\pi(t_{2i - 1}t_{2i})$ of their product is the identity.  Likewise, if there is a loop $e_{2|B| - 1}$, then the reflection $t_{2|B| - 1}$ is diagonal and so $\pi(t_{2|B| - 1}) = \id$.  It follows that removing these factors from the factorization yields a shorter reflection factorization of an element with underlying permutation $\pi(g|_B)$, with $c = |B|$ cycles.

Let $k$ be the size of the support of $B$.  By Proposition~\ref{prop correct number}, $f|_B$ contains either $c + k - 1$ or $c + k - 2$ edges, of which we remove either $2c - 1$ or $2c - 2$, respectively.  Thus exactly $k - c$ edges remain after deletion.  By elementary graph theory, a graph with $k - c$ edges and $k$ vertices has at least $c$ connected components.  On the other hand, the edges correspond to a transposition factorization of the permutation $\pi(g|_B)$ with $c$ cycles, and the entries of any given cycle must be in the same connected component, so there are at most $c$ connected components.  Combining these two statements, we have that the deleted graph has exactly $c$ connected components, and in fact the vertex set of each component is the support of a cycle in $B$.  Thus, each component contains exactly one of the vertices $v_1, \ldots, v_c$.  Finally, by elementary graph theory, every graph with $k$ vertices, $k - c$ edges, and $c$ connected components is a forest. 
\end{proof}

\begin{lemma}\label{lemma any in p}
For any $g \in G(m, p, n)$ and any minimum-length factorization $f$ of $g$, there is a standard form factorization of $g$ that is Hurwitz-equivalent to $f$.
\end{lemma}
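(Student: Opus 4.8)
The plan is to work entirely inside the factorization graph $\Gamma_f$ and to reach standard form in two stages: first isolate, at the front of $f$, a contiguous \emph{core} that factors a diagonal element, and then normalize that core into the doubled spine with its loop. I first record how Hurwitz moves act on $\Gamma_f$. Two adjacent factors whose edges are vertex-disjoint commute, so they may be swapped; two adjacent factors whose edges share a vertex $y$, say $\{x,y\}$ and $\{y,z\}$, are replaced by $\{y,z\}$ and $\{x,z\}$, with the weights updated by an explicit rule coming from the Hurwitz formula; and a loop at $y$ adjacent to an edge $\{y,z\}$ slides to a loop at $z$. Iterating these \emph{slides} lets me move a chosen factor to any position, contract a path between two vertices into a single edge joining its endpoints, and migrate the (unique, by Proposition~\ref{prop correct number}) loop of a component to any of its vertices.

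Condition~(a) of Definition~\ref{def std form} is free: by Proposition~\ref{prop:components commute} factors in different connected components of $\Gamma_f$ pairwise commute, so commutation moves reorder the components as required without altering any $f|_B$ internally, and it suffices to normalize a single part $B$. Fix $B$ with cycles $C_1, \dots, C_c$ of weights $k_1, \dots, k_c$, indexed so that their least elements satisfy $v_1 < \dots < v_c$, and write $d$ for the diagonal element with weight $k_j$ at $v_j$ and $0$ elsewhere. The first stage is a separation: arguing by induction on the number $k-c$ of ``forest'' edges, I slide the edges that build up the individual cycles to the back of $f|_B$, one leaf at a time, so that $f|_B$ becomes Hurwitz-equivalent to a factorization whose initial segment is a connected, minimum-length factorization of $d$ and whose tail factors $d^{-1}(g|_B)$; minimality at each step is checked against Theorem~\ref{shi}. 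Because standard form constrains only the first $2|B|-1$ factors, the tail may thereafter be ignored.

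The second stage normalizes the core, by induction on $c$. For $c=1$ the core is empty (when $\wt(B)=0$) or a single loop, which after a slide sits at $v_1 = v_c$, so conditions~(b) and~(c) hold. For $c \ge 2$, I first slide the loop, if present, off of $v_1$. I then reduce the degree of $v_1$ in the transposition graph of the core to exactly $2$: whenever $v_1$ has two (consecutive, after commuting) incident edges $\{v_1,a\}$ and $\{v_1,b\}$, sliding one along the other replaces them by $\{v_1,b\}$ and $\{a,b\}$, lowering $\deg(v_1)$ by one; and once $\deg(v_1)=2$, further slides redirect both remaining edges onto $v_2$, producing a doubled edge $\{v_1,v_2\}$. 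Since $v_1$ is then incident to no other factor and carries weight $k_1$ in $d$, these two parallel factors automatically multiply to the diagonal element of weight $k_1$ at $v_1$. Moving the pair to the front gives the first spine edges $e_1, e_2$; deleting it sends the weight at $v_1$ to $0$, so $v_1$ drops out and the remainder is a connected minimum-length factorization of a diagonal element on $v_2, \dots, v_c$ (again verified against Theorem~\ref{shi}), to which the inductive hypothesis applies. Carrying the loop forward at each stage lands it at $v_c$ as $e_{2c-1}$, matching conditions~(b) and~(c), exactly the shape produced by the construction of Remark~\ref{rmk:construct a std factorization}.

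The principal difficulty is the graph surgery in the second stage together with its weight bookkeeping. Reducing $\deg(v_1)$ and steering its two surviving edges onto $v_2$ must be carried out without disconnecting the core, which requires choosing the slides so that $v_1$ never becomes a cut vertex separating part of $\{v_2,\dots,v_c\}$, and each slide perturbs the weights of the factors it touches, so one must track these through the whole procedure and confirm that the result is a genuine factorization in standard form; the separation stage demands the same care in miniature. What makes all of this feasible is the slack left by Definition~\ref{def std form}: only the underlying transpositions of the spine and the position and attachment vertex of the loop are prescribed, so the sole weights that must be reproduced exactly are the cycle weights $k_1, \dots, k_c$ --- and these are precisely the quantities the Hurwitz action leaves invariant, since it preserves the product $g$.
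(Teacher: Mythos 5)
Your overall strategy---direct graph surgery on $\Gamma_f$ via slide moves---is in principle viable, but as written it has genuine gaps at exactly the two places where you defer the work. First, the separation stage: for an arbitrary minimum-length factorization there is no intrinsic labelling of edges as ``core'' versus ``forest''---by Proposition~\ref{prop correct number}, $\Gamma_f|_B$ is just some connected multigraph with $c+k-2$ non-loop edges and possibly a loop---so ``slide the edges that build up the individual cycles to the back, one leaf at a time'' presupposes the very decomposition you are trying to create, and your sketch gives no mechanism for producing it. Second, in the normalization stage, the claims that you can reduce $\deg(v_1)$ to $2$, redirect both surviving edges onto $v_2$ without ever making $v_1$ a cut vertex, and then bring the two parallel edges together are all asserted rather than proved. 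Note in particular that sliding a $\{v_1,v_2\}$-factor past a factor incident to $v_2$ converts an edge back into a $v_1$-edge, so adjacency is not achievable by naive commutation; the correct argument (slide the second $v_1$-edge leftward past the intervening word $w$ of factors not touching $v_1$, observing that conjugation carries its other endpoint along $w$, and that this endpoint must land on the first edge's endpoint because the core's product fixes $v_1$, so a genuine doubled pair emerges automatically) never appears, and these are precisely the steps that constitute the Kluitmann-style surgery. A smaller inaccuracy: two parallel factors $[(v_1\,v_2);a]$, $[(v_1\,v_2);b]$ multiply to a diagonal element with weight $k_1$ at $v_1$ \emph{and} $-k_1$ at $v_2$, so the diagonal element your induction hands off has weight $k_1+k_2$ at $v_2$ (consistent with the pair differences $d_i=k_1+\cdots+k_i$ of Definition~\ref{def pair weight and constant}), not just ``weight $k_1$ at $v_1$.''

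The paper avoids all of this with one observation that you come close to but do not exploit: Definition~\ref{def std form} constrains only the projection $\pi(f)$ (the underlying transpositions of the spine and the position and attachment vertex of the loop), and $\pi$ intertwines the Hurwitz actions on tuples in $G(m,p,n)$ and in $\Symm_n$. Hence it suffices to braid $\pi(f)$ to the projection of any standard form factorization $f'$ of $g$ (which exists by Remark~\ref{rmk:construct a std factorization}): within a connected component the projected transposition factors generate the symmetric group of the support, so Kluitmann's theorem (Theorem~\ref{Kluitmann}) supplies the braid, with the diagonal factor handled by the easy remark that inserting an identity entry into two Hurwitz-equivalent tuples preserves their equivalence; lifting that braid to $f$ yields a factorization of $g$ whose projection has standard-form shape, hence is itself in standard form. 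In particular, your self-identified ``principal difficulty''---tracking weights through the surgery---is entirely moot: no weight needs to be controlled at all, since any factorization of $g$ whose projection is right qualifies. So either carry out stage~1 and the cut-vertex/adjacency analysis in full (you would be reproving the relevant case of Kluitmann's theorem from scratch), or project and cite it as the paper does.
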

\begin{proof}
It is easy to see that the Hurwitz action preserves the connected components of $\Gamma_f$.  Since factors from different components of $\Gamma_f$ commute (Proposition~\ref{prop:components commute}), it suffices to consider the case that $\Gamma_f$ is connected (equivalently, that $\Pi_f$ is a one-part partition).

Let $f'$ be a factorization of $g$ in standard form.  (This exists by, for example, the construction in Remark~\ref{rmk:construct a std factorization}.)  By Propositions~\ref{prop correct number} and~\ref{prop correct form}, both $f$ and $f'$ have $n + c - 2$ transposition-like factors and either both have no diagonal factors or both have one diagonal factor.  In the case that both have no diagonal factors (i.e., when $\wt(g) = 0$), it follows immediately from Kluitmann's theorem (Theorem~\ref{Kluitmann}) that the projected factorizations $\pi(f)$ and $\pi(f')$ are Hurwitz-equivalent in $\Symm_n$.  In the case that both factorizations have a diagonal factor, may reduce to Kluitmann's theorem by the easy observation that if two tuples of elements in an arbitrary group are Hurwitz-equivalent, then they remain Hurwitz-equivalent if we insert a copy of the identity in an arbitrary position in each tuple.  Thus, in either case we have that there is some braid (i.e., product of Hurwitz moves) $\beta$ such that $\beta(\pi(f)) = \pi(f')$.  Hurwitz moves commute with projection, so $\pi(\beta(f)) = \pi(f')$.  The definition of standard form depends only on the projection; since $f'$ is in standard form, it follows that $\beta(f)$ is, as well.   Then $\beta(f)$ is the desired 
factorization of $g$.
\end{proof}

\subsection{Hurwitz paths between standard form factorizations}

In this section, we construct explicit sequences of Hurwitz moves connecting certain standard form factorizations to each other.  The main result of the section is Lemma~\ref{lem h path for all}, which gives a criterion for two factorizations in standard form to be Hurwitz-equivalent.

In the first part of the section, we concentrate on connected factorizations that consist only of the edges that appear in parts (b) and (c) of Definition~\ref{def std form}.

\begin{definition}\label{def doubled path}
For $n \geq 1$, we say that a factorization in $G(m, p, n)$ is a \emph{doubled path} if it is of the form
\[
\Big([(12);a_{1}], [(12);b_{1}], [(23); a_{2}], [(23); b_{2}], \ldots, [(n - 1 \; n); a_{n - 1}], [(n - 1 \; n); b_{n - 1}] \Big)
\]
or
\[
\Big([(12);a_{1}], \; [(12);b_{1}], \ldots, [(n - 1 \; n); a_{n - 1}], \; [(n - 1 \; n); b_{n - 1}], \; [\id; (0,\ldots,0,d)] \Big).
\]
\end{definition}

We construct a collection of explicit sequences of Hurwitz moves that are well-behaved when restricted to doubled paths, and use them as building blocks to prove the necessary Hurwitz equivalences.  As a first step, we establish some additional terminology.

By multiplying out, it's easy to see that every doubled path in $G(m, p, n)$ is a factorization of a diagonal element $g = [\id; (k_1, \ldots, k_n)] \in G(m,p, n)$ of weight~$0$ (if no diagonal element is present) or~$d$, and that for each $i$ one has $b_i = a_i + k_1 + k_2 + \ldots + k_i$.  Consequently, given the product $g$, the $a_i$ determine the entire factorization.  This suggests the following definition.

\begin{definition}\label{def pair weight and constant}
Let 
$
f= \Big([(12);a_{1}], [(12); b_{1}], [(23); a_{2}], [(23); b_{2}], \ldots \Big)
$
be a doubled path, factoring an element $g = [\id; (k_1, \ldots, k_n)] \in G(m,p, n)$. Define the \emph{pair weight} of the $i$-th pair of factors (i.e., with underlying transposition $(i\; i+1)$) to be $a_{i}$ and the corresponding \emph{pair difference} to be $d_i \defeq b_i - a_i = k_{1}+\cdots+k_{i}$.
\end{definition}

We consider three families of operations on doubled paths.  The first family is extremely simple.

\begin{prop}\label{prop operation one}
Let $f$ be the doubled path in $G(m, p, n)$ that has pair weights $(a_1, \ldots, a_{n - 1})$ and pair differences $(d_1, \ldots, d_{n - 1})$.  Then $\sigma_{2i-1}(f)$ is also a doubled path.  Moreover, $\sigma_{2i-1}(f)$ has the same pair weights and pair differences as $f$,  except that the $i$-th pair weight of $\sigma_{2i - 1}(f)$ is $a_i + d_i$.
\end{prop}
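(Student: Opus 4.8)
The plan is to observe that $\sigma_{2i-1}$ acts only on the two factors in positions $2i-1$ and $2i$, which are precisely the $i$-th pair $[(i\;i+1); a_i]$ and $[(i\;i+1); b_i]$ of the doubled path, leaving every other factor untouched. Writing $s = (i\;i+1)$, I would first record that transposition-like reflections are involutions (a one-line check from the multiplication rule, or directly from the fact that they have order $2$), so that $[(i\;i+1); b_i]^{-1} = [(i\;i+1); b_i]$. Applying the Hurwitz move then replaces the $i$-th pair by
\[
\Big([(i\;i+1); b_i], \; [(i\;i+1); b_i]\,[(i\;i+1); a_i]\,[(i\;i+1); b_i]\Big),
\]
while all other pairs (and the diagonal factor, if present) are unchanged.

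Next I would argue that the result is again a doubled path. Both new factors have underlying permutation $s = (i\;i+1)$: the first obviously, and the second because it is a conjugate of a transposition-like reflection by an element with underlying permutation $s$, whose underlying permutation is therefore $s\cdot s\cdot s = s$. Since the remaining factors are untouched, the full sequence of underlying transpositions is still $(12),(12),(23),(23),\ldots$, possibly followed by the diagonal factor, which is exactly the defining shape of a doubled path in Definition~\ref{def doubled path}.

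The one genuine computation is the weight of the conjugated factor, which I would carry out using the wreath-product rule $[u;(a_1,\ldots,a_n)]\cdot[v;(b_1,\ldots,b_n)]=[uv;(a_{v(1)}+b_1,\ldots,a_{v(n)}+b_n)]$, tracking only coordinates $i$ and $i+1$ (all others remain $0$). A short calculation gives new first-factor weight $b_i = a_i + d_i$, which is exactly the asserted new $i$-th pair weight, and new second-factor weight $a_i + 2d_i$, so that the new pair difference is $(a_i + 2d_i) - (a_i + d_i) = d_i$, unchanged. As a sanity check I would note that this is forced anyway: Hurwitz moves preserve the product $g = [\id;(k_1,\ldots,k_n)]$, and the pair differences are the partial sums $d_i = k_1 + \cdots + k_i$ determined by $g$, so every pair difference is automatically preserved.

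The main obstacle, such as it is, is purely bookkeeping: keeping the convention $[(i\,j); a] = [(i\,j); (0,\ldots,a,\ldots,-a,\ldots,0)]$ straight (which coordinate carries $a$ versus $-a$) through the two applications of the multiplication rule, so that the signs in the conjugation come out correctly. No conceptual difficulty is expected.
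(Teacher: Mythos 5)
Your proposal is correct and follows essentially the same route as the paper: the paper also reduces to the single affected pair (stating that the case $n=2$, $i=1$ captures full generality) and computes the Hurwitz move directly, obtaining the new pair $\bigl([(i\;i{+}1); b_i],\, [(i\;i{+}1); 2b_i - a_i]\bigr)$, which agrees with your weights $a_i + d_i$ and $a_i + 2d_i$. Your closing observation that pair differences are forced because the product determines them is a nice sanity check already implicit in the paper's remark preceding Definition~\ref{def pair weight and constant}.
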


\begin{proof}
The full generality may be captured by considering the case $n = 2$, $i = 1$, when $f$ has only one pair of factors, 
\[
f= \Big([(1 2); a_1], [(1 2); b_1]\Big),
\]
where $b_1 = a_1 + d_1$.  Applying $\sigma_1$ gives 
\begin{align*}
\Big(\br{[(1 2); a_1]}, \yg{[(1 2); b_1]}\Big) & \overset{\sigma_{1}}{\longrightarrow} \Big({\yg{[(1 2); b_1]}}, {\br{[(1 2);2b_1-a_1]}} \Big) \\
& = \Big({\yg{[(1 2); a_1 + d_1]}}, {\br{[(1 2); b_1 + d_1]}} \Big)= \sigma_1(f).
\end{align*}
The result has new pair weight $a_1 + d_1$ and the same pair difference $b_1-a_1 = d_1$, as claimed.
\end{proof}

The second family of operations is more complex, and is indexed by a pair $i < j$ of numbers in $\{1, \ldots, n - 1\}$.

\begin{definition}\label{def operation three}
For any pair of indices $i, j$ with $1 \leq i < j \leq n-1$, 
define $\tau_{i,j}$ to be the following sequence of Hurwitz moves: 
\begin{multline*}
\tau_{i,j} \defeq \sigma_{2j-2} \circ \sigma_{2j-1} \circ \sigma_{2j-3}^{-1} \circ \sigma_{2j-2}^{-1}\circ\cdots \circ 
 \sigma_{2i+2}\circ\sigma_{2i+3}\circ\sigma_{2i+1}^{-1}\circ\sigma_{2i+2}^{-1}\circ {} \\ 
 \sigma_{2i}^{-1}\circ \sigma_{2i+1}^{-1}\circ \sigma_{2i-1}^{-1}\circ \sigma_{2i}^{-1}\circ \sigma_{2i}^{-1}\circ \sigma_{2i+1}^{-1}\circ \sigma_{2i-1}^{-1}\circ \sigma_{2i}^{-1} \circ {} 
 \\ \sigma_{2i+2}\circ\sigma_{2i+1}\circ \sigma_{2i+3}^{-1}\circ \sigma_{2i+2}^{-1}\circ\cdots \circ \sigma_{2j-2}\circ\sigma_{2j-3}\circ\sigma_{2j-1}^{-1}\circ \sigma_{2j-2}^{-1}.
\end{multline*}
\end{definition}

The associated braids are illustrated in Figure~\ref{fig:braid 1}.

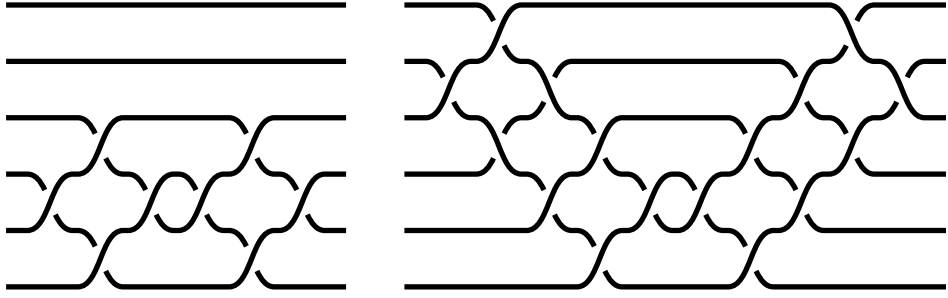
\begin{figure}
\begin{center}
\rotatebox{90}{\begin{tikzpicture}
\pic[braid/.cd, number of strands=6, 
line width=2pt 
,gap =0.15, width = .75cm, height=.67cm,
, name prefix=braid,]{braid={a_2^{-1} a_1^{-1}-a_3^{-1} a_2^{-1} a_2^{-1} a_1^{-1}-a_3^{-1} a_2^{-1}}};
\end{tikzpicture}}%
\qquad
\rotatebox{90}{\begin{tikzpicture}
\pic[braid/.cd, number of strands=6, 
line width=2pt 
,gap =0.15, width = .75cm, height=.67cm,
, name prefix=braid,]{braid={a_4 a_5-a_3^{-1} a_4^{-1}-a_2^{-1} a_1^{-1}-a_3^{-1} a_2^{-1} a_2^{-1} a_1^{-1}-a_3^{-1} a_2^{-1}-a_4 a_3-a_5^{-1} a_4^{-1}}};
\end{tikzpicture}}
\end{center}
\caption{The braids corresponding to $\tau_{1, 2}$ and $\tau_{1, 3}$ when $n = 4$.}
\label{fig:braid 1}
\end{figure}

\begin{prop}\label{prop operation three}
Let $f$ be the doubled path in $G(m, p, n)$ that has pair weights $(a_1, \ldots, a_{n - 1})$ and pair differences $(d_1, \ldots, d_{n - 1})$.  Then $\tau_{i, j}(f)$ is also a doubled path.
Moreover, $\tau_{i, j}(f)$ has the same pair weights and pair differences as $f$, except that the $i$th pair weight of $\tau_{i, j}(f)$ is $a_i + d_j$ and the $j$th pair weight of $\tau_{i, j}(f)$ is $a_j + d_i$.
\end{prop}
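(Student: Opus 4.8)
The plan is to induct on $j-i$, exploiting a conjugation structure hidden in Definition~\ref{def operation three}. Before that, two reductions. Since Hurwitz moves preserve the product of a tuple, $\tau_{i,j}(f)$ is again a factorization of the same diagonal element $g = [\id;(k_1,\ldots,k_n)]$; by the discussion preceding Definition~\ref{def pair weight and constant}, the $\ell$-th pair difference of \emph{any} doubled path factoring $g$ equals $k_1+\cdots+k_\ell$, independent of the path. Thus, once we know $\tau_{i,j}(f)$ is a doubled path, its pair differences are forced to be the original ones, and only two claims remain: that $\tau_{i,j}(f)$ is a doubled path, and that its pair weights are as stated. Moreover, $\tau_{i,j}$ is built only from $\sigma_{2i-1},\ldots,\sigma_{2j-1}$, so it fixes every factor outside the pairs indexed $i,i+1,\ldots,j$; hence we may assume $f$ consists only of these pairs.

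The key algebraic observation is a recursion. Writing $R_\ell \defeq \sigma_{2\ell}\circ\sigma_{2\ell-1}\circ\sigma_{2\ell+1}^{-1}\circ\sigma_{2\ell}^{-1}$ for the blocks of the third line of Definition~\ref{def operation three}, one checks that the first-line blocks are precisely their inverses, so that peeling the outermost block off each end yields
\[
\tau_{i,j} = R_{j-1}^{-1}\circ \tau_{i,j-1}\circ R_{j-1}.
\]
The base case $j=i+1$ is the middle line alone, $\tau_{i,i+1}=\sigma_{2i}^{-1}\sigma_{2i+1}^{-1}\sigma_{2i-1}^{-1}\sigma_{2i}^{-1}\sigma_{2i}^{-1}\sigma_{2i+1}^{-1}\sigma_{2i-1}^{-1}\sigma_{2i}^{-1}$; since it involves only the strands $i,i+1,i+2$, it suffices to apply these eight Hurwitz moves to a three-vertex doubled path and verify directly, using the wreath-product multiplication rule, that the output is again a doubled path with pair weights $a_i+d_{i+1}$ and $a_{i+1}+d_i$ and unchanged differences. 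This is routine but is the most calculation-heavy step, and should be organized so as to establish the doubled-path form and the weight formula simultaneously.

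For the inductive step I would first compute the effect of $R_{j-1}$, which touches only pairs $j-1$ and $j$. A direct three-strand calculation shows that $R_{j-1}$ carries the doubled path to a factorization that, on the positions $1,\ldots,2j-2$ acted on by $\tau_{i,j-1}$, is again a doubled path, but now along the vertex sequence $i,i+1,\ldots,j-1,j+1$: its last pair is the edge $\{j-1,j+1\}$, whose pair difference turns out to be exactly $d_j$, while the final two factors carry an untouched $(j-1\ j)$-pair. As $\tau_{i,j-1}$ acts only through positions, it treats this as an ordinary doubled path (formally, conjugate by the vertex relabeling $j+1\mapsto j$, which commutes with the braid action), so the induction hypothesis changes the $i$-th pair weight by the last pair's difference $d_j$ and changes the last pair's weight by $d_i$, leaving everything between them fixed. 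Applying $R_{j-1}^{-1}$ then restores standard form: a short matching calculation returns the $(j-1)$-th pair to its original weights $a_{j-1},b_{j-1}$ and produces a $(j\ j{+}1)$-pair of weight $a_j+d_i$. Combining, pair $i$ gains $d_j$, pair $j$ gains $d_i$, and all other pairs are unchanged.

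The main obstacle is the inductive step, and specifically recognizing the output of $R_{j-1}$ as a genuine \emph{relabeled} doubled path whose transported final pair carries precisely the difference $d_j$, so that the induction hypothesis applies to exactly the right object. Keeping the vertex-relabeling bookkeeping straight (the ``real'' vertex $j$ appearing only in the untouched final pair, while vertex $j+1$ temporarily plays its role) is the delicate point; everything else reduces to the two local three-strand computations described above.
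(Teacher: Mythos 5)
Your proposal is correct and takes essentially the same route as the paper: the paper likewise proves the adjacent case $j = i+1$ by direct computation of the eight middle moves, and handles general $j>i$ by induction on the conjugating four-move blocks, tracking the $j$th pair as the first $4(j-i-1)$ moves transport it to an $(i{+}1\ \, j{+}1)$-pair with pair difference $d_j$ sitting next to pair $i$, applying the middle calculation there, and then undoing the transport. Your recursion $\tau_{i,j} = R_{j-1}^{-1}\circ \tau_{i,j-1}\circ R_{j-1}$ together with the relabeling $j+1\mapsto j$ (which indeed commutes with the Hurwitz action) is just a cleaner packaging of that same induction, and the local three-strand computations you describe match exactly the ones displayed in the paper's proof, including the effect of $R_{j-1}^{-1}$ restoring the $(j-1)$-th pair and producing the $(j\ \, j{+}1)$-pair of weight $a_j + d_i$.
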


\begin{proof}
First consider the case $i = 1$ and $j = 2$, with
\[
\tau_{1, 2} = \sigma_{2}^{-1}\circ\sigma_{3}^{-1} \circ \sigma_{1}^{-1}\circ \sigma_{2}^{-1}\circ \sigma_{2}^{-1} \circ \sigma_{3}^{-1} \circ \sigma_{1}^{-1} \circ\sigma_{2}^{-1}.
\]
In this case, the pair weights for $f= \left([(1 2); a_1], [(1 2); b_1], [(2 3);a_2], [(2 3); b_2], \ldots\right)$ are $(a_1, a_2, \ldots)$ and the pair differences are $(d_1, d_2, \ldots) = (b_1 -a_1, b_2-a_2, \ldots)$. We compute 
\begin{align*}
f & =\Big(\yg{[(1 2); a_1]} , \yo{[(1 2); b_1]} ,\bl{[(2 3);a_2]} , \br{[(2 3); b_2]}, \ldots\Big) \\
& \overset{\sigma_2^{-1}}{\longrightarrow} \Big(\yg{[(1 2); a_1]} , \bl{[(1 3); a_2 + b_1]} , \yo{[(1 2);b_1]} ,\br{[(2 3); b_2]}, \ldots \Big) \\
& \overset{\sigma_{1}^{-1}}{\longrightarrow} \Big(\bl{[(2 3); a_2+d_1]} , \yg{[(1 2); a_1]} , \yo{[(1 2);b_1]} , \br{[(2 3); b_2]}, \ldots\Big) \\
& \overset{\sigma_{3}^{-1}}{\longrightarrow} \Big(\bl{[(2 3); a_2+d_1]} , \yg{[(1\, 2); a_1]} , \br{[(13);b_2 + b_1]} , \yo{[(1 2);b_1]}, \ldots\Big) \\
& \overset{\sigma_{2}^{-1}}{\longrightarrow} \Big(\bl{[(2 3); a_2+d_1]} ,\br{[(23);b_2+d_1]} , \yg{[(1 2); a_1]} , \yo{[(1 2);b_1]}, \ldots\Big) \\
& \overset{\sigma_{2}^{-1}}{\longrightarrow} \Big(\bl{[(2 3); a_2+d_1]} , \yg{[(13);a_1+b_2 + d_1]} , \br{[(23); b_2+d_1]} , \yo{[(1 2);b_1]}, \ldots \Big)\\
& \overset{\sigma_{1}^{-1}}{\longrightarrow} \Big(\yg{[(12);a_1+d_2]} , \bl{[(2 3); a_2+d_1]} , \br{[(2 3); b_2+d_1]} , \yo{[(1 2);b_1]}, \ldots\Big)\\
& \overset{\sigma_{3}^{-1}}{\longrightarrow} \Big(\yg{[(12);a_1+d_2]} , \bl{[(2 3); a_2+d_1]} , \yo{[(13);b_1+b_2+ d_1]} , \br{[(2 3); b_2+d_1]}, \ldots\Big)\\
& \overset{\sigma_{2}^{-1}}{\longrightarrow} \Big(\yg{[(12);a_1+d_2]} , \yo{[(1 2);b_1 +d_2]} , \bl{[(2 3); a_2+d_1]} , \br{[(2 3); b_2+d_1]}, \ldots \Big)
=  \tau_{1,2}(f).
\end{align*}
The pair differences of $\tau_{1,2}(f)$ are $(b_1-a_1, b_2-a_2, \ldots) = (d_1, d_2, \ldots)$ and the pair weights are $(a_1 + d_2, a_2 + d_1, \ldots)$, as desired.

The case that $j = i + 1$ for $i > 1$ is identical to the previous case except for the indices.

For general $j > i$, one may show by induction that, after applying the first $4(j - i - 1)$ Hurwitz moves in $\tau_{i, j}$ to $f$, the resulting factorization has the following four factors as entries in position $2i - 1$, $2i$, $2i + 1$, and $2i + 2$:
\[
\Big( [(i \; i + 1); a_i], \;
 [(i \; i + 1); b_i], \;
 [(i + 1 \; j+1); \square], \;
 [(i + 1 \; j+1); \triangle] \Big),
\]
where $\square = a_{j}+ b_{j-1}+\ldots+ b_{i+2}+b_{i+1}$ and $\triangle = \square + d_j$.
Applying the same calculation as above replaces these four factors with
\[
\Big( [(i\; i+1);a_i + d_j], [(i\; i+1); b_i + d_j], [(i+1\; j+1); \square+ d_i], [(i+1\; j+1); \triangle +d_i] \Big),
\]
and the final $4(j - i - 1)$ Hurwitz moves restore the intermediate pairs and place the factorization back in standard form.

For example, in the case $i = 1$, $j = 3$, starting from 
\[
f= \Big(\br{[(1 2); a_1]}, \br{[(1 2); b_1]}, \bl{[(2 3);a_2]}, \bl{[(2 3); b_2]}, \yg{[(3 4); a_3]}, \yg{[(3 4);b_3]}\Big),
\]
the first four Hurwitz moves produce
\begin{align*}
f & \overset{\sigma_{4}^{-1}}{\longrightarrow} \Big(\br{[(1 2); a_1]}, \br{[(1 2); b_1]}, \bl{[(2 3);a_2]},\yg{[(24);a_3+b_2]},\bl{[(2 3); b_2]},\yg{[(3 4);b_3]}\Big)\\
 & \overset{\sigma_{5}^{-1}}{\longrightarrow} \Big(\br{[(1 2); a_1]}, \br{[(1 2); b_1]},\bl{[(2 3);a_2]},\yg{[(2 4);a_3+b_2]},\yg{[(24); b_3+b_2]},\bl{[(2 3); b_2]}\Big)\\
 &\overset{\sigma_{3}}{\longrightarrow}\Big(\br{[(1 2); a_1]}, \br{[(1 2); b_1]}, \yg{[(2 4); a_3+b_2]}, \bl{[(3 4);a_3+d_2]},\yg{[(2 4); b_3+b_2]},\bl{[(2 3);b_2]}\Big)\\
 & \overset{\sigma_{4}}{\longrightarrow}\Big(\br{[(1 2); a_1]}, \br{[(1 2); b_1]}, \yg{[(2 4); a_3+b_2]},\yg{[(2 4); b_3+b_2]}, \bl{[(2 3); a_2+d_3]},\bl{[(2 3);b_2]}\Big).
\end{align*}
Then applying $\tau_{1, 2}$ affects only the first four factors:
\begin{multline*}
\Big(\br{[(1 2); a_1]}, \br{[(1 2); b_1]}, \yg{[(2 4); a_3+b_2]},\yg{[(2 4); b_3+b_2]}, \ldots \Big) \overset{\tau_{1,2}}{\longrightarrow} {} \\
\Big(\br{[(1 2); a_1+d_3]}, \br{[(1 2); b_1+d_3]}, \yg{[(2 4); a_3+b_2+d_1]}, \yg{[(2 4);b_3+b_2+d_1]}, \ldots\Big).
\end{multline*}
And finally the last four Hurwitz moves leave the first two factors untouched while restoring the original middle factors:
\begin{align*}
\Big(\ldots, \yg{[(2 4); a_3+b_2+d_1]}, \yg{[(2 4);b_3+b_2+d_1]}, \bl{[(2 3); a_2+d_3]}, \bl{[(2 3);b_2]}\Big) &\overset{\sigma_{4}^{-1}}{\longrightarrow} \\
\Big(\ldots, \yg{[(2 4); a_3+b_2+d_1]}, \bl{[(3 4);a_3+d_2+d_1]}, \yg{[(2 4);b_3+b_2+d_1]}, \bl{[(23);b_2]}\Big) & \overset{\sigma_{3}^{-1}}{\longrightarrow} \\
\Big(\ldots,\bl{[(23);a_2]}, \yg{[(2 4); a_3+b_2+d_1]}, \yg{[(2 4);b_3+b_2+d_1]}, \bl{[(2 3);b_2]} \Big) & \overset{\sigma_{5}}{\longrightarrow} \\
\Big(\ldots,\bl{[(23);a_2]},\yg{[(24); a_3+b_2+d_1]},\bl{[(2 3);b_2]},\yg{[(3 4);b_3+d_1]} \Big) & \overset{\sigma_{4}}{\longrightarrow} \\ \Big(\ldots, \bl{[(2 3);a_2]}, \bl{[(2 3); b_2]}, \yg{[(3 4); a_3+d_1]}, \yg{[(3 4);b_3+d_1]} \Big) & = \tau_{1,3}(f).
 \end{align*} 
In $\tau_{1,3}(f)$, the pair weights are $(a_1+d_3, a_2, a_3+d_1)$ and the pair differences are $(b_1-a_1, b_2-a_2, b_3-a_3)= (d_1, d_2, d_3)$, as claimed.
\end{proof}

Finally, we introduce a third family of operations that will be of use in the case that the factorization includes a diagonal reflection.

\begin{definition}\label{def operation four}
Given $i$ where $1\leq i \leq n-1$, define $\gamma_{i}$ to be the following sequence of Hurwitz moves:
\begin{multline*}
\gamma_{i} \defeq \sigma_{2n-2}\circ\sigma_{2n-3}^{-1}\circ\cdots\circ\sigma_{2i+2}\circ\sigma_{2i+1}^{-1}\circ {}\\
\sigma_{2i}\circ\sigma_{2i-1}\circ\sigma_{2i-1}\circ\sigma_{2i}\circ {}\\
\sigma_{2i+1}\circ\sigma_{2i+2}^{-1}\circ \cdots \circ \sigma_{2n-3}\circ\sigma_{2n-2}^{-1}.
\end{multline*}
\end{definition}

The associated braid is illustrated in Figure~\ref{fig:braid 3}.

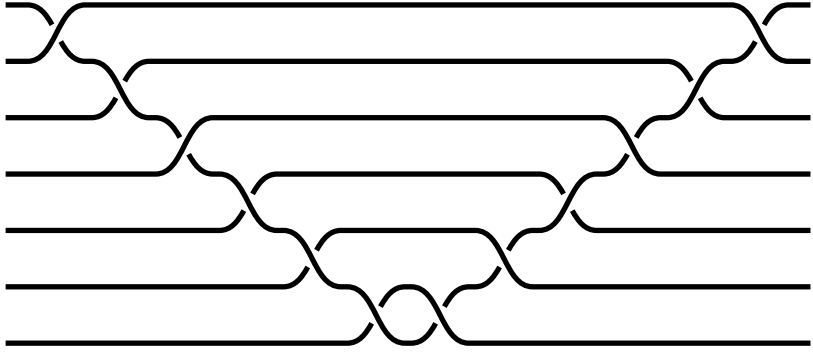
\begin{figure}
\begin{center}
\rotatebox{90}{\begin{tikzpicture}
\pic[braid/.cd, number of strands=7, 
line width=2pt 
,gap =0.1, width = .75cm, height=.85cm,
, name prefix=braid,]{braid={a_6 a_5^{-1} a_4 a_3^{-1} a_2 a_1 a_1 a_2 a_3 a_4^{-1} a_5 a_6^{-1}}};
\end{tikzpicture}}
\end{center}
\caption{The braid corresponding to $\gamma_{1}$ when $n = 4$.} 
\label{fig:braid 3}
\end{figure}

\begin{prop}\label{prop operation four}
Let $f$ be the doubled path in $G(m, p, n)$ that has pair weights $(a_1, \ldots, a_{n - 1})$ and pair differences $(d_1, \ldots, d_{n - 1})$, and with diagonal factor of weight $d$.  Then $\gamma_{i}(f)$ is also a doubled path. Moreover, $\gamma_{i}(f)$ has the same pair weights and pair differences as $f$, except that the $i$-th pair weight of $\gamma_{i}(f)$ is $a_{i}+d$.
\end{prop}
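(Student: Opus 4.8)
Throughout, read compositions of Hurwitz moves right to left, so that in $\gamma_i$ the rightmost block of moves is applied first. The plan is to exploit the palindromic structure of $\gamma_i$. Writing $B = \sigma_{2i+1}\circ\sigma_{2i+2}^{-1}\circ\cdots\circ\sigma_{2n-3}\circ\sigma_{2n-2}^{-1}$ for the rightmost block and $M = \sigma_{2i}\circ\sigma_{2i-1}\circ\sigma_{2i-1}\circ\sigma_{2i}$ for the central block, one checks directly from Definition~\ref{def operation four} that the leftmost block equals $B^{-1}$, so that $\gamma_i = B^{-1}\circ M\circ B$ is a conjugate of $M$ by $B$. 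The braid $B$ involves only the generators $\sigma_{2i+1},\ldots,\sigma_{2n-2}$, hence moves only the factors in positions $2i+1,\ldots,2n-1$, while $M$ involves only $\sigma_{2i-1},\sigma_{2i}$ and so touches only the factors in positions $2i-1,2i,2i+1$. Thus the computation splits into three transparent stages: a transport stage $B$ that carries the diagonal factor from the end of $f$ into the slot just right of the $i$th pair, a central stage $M$ that absorbs the diagonal's weight into the $i$th pair, and the reverse transport $B^{-1}$.

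First I would prove a transport lemma describing $B(f)$: the first $i$ pairs are unchanged; the diagonal factor is relocated to position $2i+1$, now carrying its weight $d$ in coordinate $i+1$ rather than coordinate $n$; and each later pair is shifted one position rightward (with its leading weight altered). This is proved by induction on $n-i$, exactly in the style of the proof of Proposition~\ref{prop operation three}: the consecutive moves $\sigma_{2k}^{-1}$ and $\sigma_{2k-1}$ slide the diagonal past the $k$th pair, and the effect on the diagonal is a single conjugation by a transposition-like reflection. Since conjugating a diagonal matrix by a monomial matrix merely permutes its diagonal entries, this transports the weight $d$ from coordinate $k+1$ to coordinate $k$. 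Because $B$ stops at $\sigma_{2i+1}$, the diagonal comes to rest adjacent to --- but without ever passing --- the $i$th pair.

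Next I would carry out the central stage by hand. By the transport lemma, $M$ acts on the three consecutive factors $[(i\;i+1);a_i]$, $[(i\;i+1);b_i]$, and the relocated diagonal $[\id;(\ldots,d,\ldots)]$ (with $d$ in coordinate $i+1$). Executing the four moves $\sigma_{2i},\sigma_{2i-1},\sigma_{2i-1},\sigma_{2i}$ in turn and repeatedly invoking the two elementary conjugation rules --- that conjugating a transposition-like reflection by the neighbouring diagonal shifts its weight by $d$, and that conjugating the diagonal by a transposition-like reflection relocates its weight across the transposition --- shows that the two factors of the $i$th pair each pick up $d$, so that the pair weight becomes $a_i+d$ while the pair difference $d_i$ is unchanged, and, crucially, that the diagonal is returned to position $2i+1$ in exactly the state it had after $B$. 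This last fact is what lets the argument close up cleanly.

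Finally, since the central stage leaves every factor in positions $\ge 2i+1$ in the identical configuration they held after $B$, applying $B^{-1}$ reverses the transport verbatim on those positions: the later pairs are restored to their original form and the diagonal returns to position $2n-1$ with weight $d$ in coordinate $n$. The factors of the $i$th pair, in positions $2i-1,2i$, are never touched by $B$ or $B^{-1}$, so their modified weights survive. Hence $\gamma_i(f)$ is again a doubled path agreeing with $f$ except that the $i$th pair weight has become $a_i+d$. (That the output is literally a doubled path --- each factor of the correct type with the correct underlying transposition --- can be checked separately by projecting to $\Symm_n$, where the weights vanish and the claim reduces to the corresponding permutation-level identity.) The main obstacle is the bookkeeping in the two inner stages, above all keeping track of which coordinate carries the diagonal weight at each move; the conjugation structure $\gamma_i = B^{-1}\circ M\circ B$ is precisely what makes this manageable and guarantees that the temporary modifications to the later pairs cancel.
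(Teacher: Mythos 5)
Your proposal is correct and is essentially the paper's own argument in slightly more formal dress: the paper likewise applies the first $2(n-i-1)$ moves to transport the diagonal factor next to the $i$th pair, uses the middle four moves $\sigma_{2i},\sigma_{2i-1},\sigma_{2i-1},\sigma_{2i}$ to add $d$ to both members of that pair while returning the diagonal to its transported state, and then notes that the final $2(n-i-1)$ moves ``restore the suffix,'' which is exactly your observation that the outer block is $B^{-1}$ so that $\gamma_i = B^{-1}\circ M\circ B$. Your transport lemma is if anything stated more carefully than the paper's prose (your placement of the weight $d$ in coordinate $i+1$ agrees with the paper's worked $n=3$, $i=1$ example, whereas the paper's text says ``position $i$''), so this is the same proof, packaged as a conjugation argument.
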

\begin{proof}
The proof is very similar to that of Proposition~\ref{prop operation three}.  After applying the first $2(n - i - 1)$ Hurwitz moves, the factorization will have the following three factors in positions $2i - 1$, $2i$, and $2i + 1$:
\[
\Big(
 [(i\; i + 1); a_i], \;
 [(i\; i + 1); b_i], \; 
 [\id; (0, \ldots, 0, d, 0, \ldots, 0)]
\Big)
\]
where in the diagonal factor the nonzero weight is in position $i$.  Applying the middle four Hurwitz moves $\sigma_{2i}$, $\sigma_{2i - 1}$, $\sigma_{2i - 1}$, $\sigma_{2i}$ replaces these three factors with
\[
\Big(
 [(i\; i + 1); a_i + d], \;
 [(i\; i + 1); b_i + d], \; 
 [\id; (0, \ldots, 0, d, 0, \ldots, 0)]
\Big),
\]
and the final $2(n - i - 1)$ Hurwitz moves restore the suffix and place the factorization back in standard form.

For example, in the case $n= 3$, $i=1$, we have the following sequence of Hurwitz moves: 
\begin{align*}
f &= \Big(\br{[(1 2); a_1]}, \br{[(1 2); b_1]}, \bl{[(2 3);a_2]},\bl{[(2 3); b_2]}, \yg{[\id;(0,0,d)]}\Big)\\
&  \overset{\sigma_{4}^{-1}}{\longrightarrow}\Big( \br{[(1 2); a_1]}, \br{[(1 2); b_1]},\bl{[(2 3);a_2]},\yg{[\id;(0,d,0)]},\bl{[(2 3); b_2]} \Big)\\
& \overset{\sigma_{3}}{\longrightarrow} \Big(\br{[(1 2); a_1]}, \br{[(1 2); b_1]} ,\yg{[\id;(0,d,0)]},\bl{[(2 3);a_2+d]},\bl{[(2 3); b_2]} \Big)\\
& \overset{\sigma_{2}}{\longrightarrow} \Big( \br{[(1 2); a_1]},\yg{[\id;(0,d,0)]} ,\br{[(1 2); b_1+d]}, \bl{[(2 3);a_2+d]},\bl{[(2 3); b_2]} \Big)\\
& \overset{\sigma_{1}}{\longrightarrow}  \Big( \yg{[\id;(0,d,0)]},\br{[(1 2); a_1+d]} ,\br{[(1 2); b_1+d]},\bl{[(2 3);a_2+d]},\bl{[(2 3); b_2]} \Big)\\
& \overset{\sigma_{1}}{\longrightarrow}   \Big(\br{[(1 2); a_1+d]}, \yg{[\id;(d,0,0)]},\br{[(1 2); b_1+d]},\bl{[(2 3);a_2+d]},\bl{[(2 3); b_2]}\Big)\\
& \overset{\sigma_{2}}{\longrightarrow}  \Big( \br{[(1 2); a_1+d]} ,\br{[(1 2); b_1+d]},\yg{[\id;(0,d,0)]},\bl{[(2 3);a_2+d]},\bl{[(2 3); b_2]} \Big)\\
&\overset{\sigma_{3}^{-1}}{\longrightarrow}\Big(\br{[(1 2); a_1+d]}, \br{[(1 2); b_1+d]},\bl{[(2 3);a_2]},\yg{[\id;(0,d,0)]},\bl{[(2 3); b_2]}\Big)\\
&\overset{\sigma_{4}}{\longrightarrow}\Big(\br{[(1 2); a_1+d]} ,\br{[(1 2); b_1+d]},\bl{[(2 3);a_2]},\bl{[(2 3); b_2]}, \yg{[\id;(0,0,d)]}\Big)
= \gamma_{1}(f).
\end{align*}
In $\gamma_{1}(f)$, the pair weights are $(a_1+d, a_2, a_3)$ with the same pair differences as in $f$, as desired.
\end{proof}

By combining these operations, we give a sufficient condition for two doubled paths to belong to the same Hurwitz orbit.

\begin{prop}\label{prop h path in m}
Suppose $f$ and $f'$ are two doubled paths factoring the same element $g$ of weight $d$ in $G(m, p, n)$, with respective pair weights $(a_1, \ldots, a_{n - 1})$ and $(a'_1, \ldots, a'_{n - 1})$.
If there exists an $n \times (n - 1)$ $\ZZ$-matrix $M = (m_{ij})$ such that $m_{ij} = m_{ji}$ for $ i, j \in \{1, \ldots, n - 1\}$ and
\begin{equation}
\label{modular matrix relation}
\begin{pmatrix} a_{1} & \cdots & a_{n-1}\end{pmatrix}
+
\begin{pmatrix} d_1 &  \cdots & 
d_{n - 1} & d
\end{pmatrix} \cdot M \equiv
\begin{pmatrix}a'_{1} &  \cdots & a'_{n-1}\end{pmatrix} \pmod{m}
\end{equation}
(with equivalence taken coordinatewise), then $f$ and $f'$ belong to the same Hurwitz orbit.
\end{prop}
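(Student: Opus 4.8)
The plan is to realize the prescribed change of the pair-weight vector as an explicit composition of the three families of operations introduced above, reading the exponents off the matrix $M$. The crucial preliminary observation is that each of $\sigma_{2i-1}$ (Proposition~\ref{prop operation one}), $\tau_{i,j}$ (Proposition~\ref{prop operation three}), and $\gamma_i$ (Proposition~\ref{prop operation four}) sends a doubled path to a doubled path while \emph{preserving} both the pair differences $(d_1, \ldots, d_{n-1})$ and the diagonal weight $d$, and alters the pair weights by amounts that depend only on these (invariant) quantities. Consequently these operations may be applied in any order, their effects on the pair-weight vector simply add, and since each is a braid (a product of Hurwitz moves) its inverse is available and realizes the negative of its effect.

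With this in hand, I would set
\[
\beta \;\defeq\; \Big(\prod_{j=1}^{n-1}\gamma_j^{\,m_{nj}}\Big)\circ\Big(\prod_{1\le i<j\le n-1}\tau_{i,j}^{\,m_{ij}}\Big)\circ\Big(\prod_{j=1}^{n-1}\sigma_{2j-1}^{\,m_{jj}}\Big),
\]
where a negative exponent denotes the corresponding inverse braid, and where the $\gamma$-factors are omitted when $d = 0$ (in which case there is no diagonal reflection and the last row of $M$ contributes nothing, being multiplied by $d = 0$). By the additivity just noted, $\beta(f)$ is a doubled path with the same pair differences and diagonal weight as $f$, and its $j$-th pair weight is $a_j$ increased by $m_{jj}d_j$ (from the $\sigma$-factors), by $\sum_{i<j}m_{ij}d_i + \sum_{k>j}m_{jk}d_k$ (from the $\tau$-factors, since $\tau_{i,j}$ changes $a_j$ by $d_i$ when $i<j$ and $\tau_{j,k}$ changes $a_j$ by $d_k$ when $k>j$), and by $m_{nj}d$ (from the $\gamma$-factors).

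Using the symmetry $m_{jk}=m_{kj}$ to rewrite $\sum_{k>j}m_{jk}d_k = \sum_{i>j}m_{ij}d_i$, these contributions collapse to $\sum_{i=1}^{n-1}m_{ij}d_i + m_{nj}d$, which is exactly the $j$-th coordinate of $(d_1, \ldots, d_{n-1}, d)\cdot M$. Thus the pair-weight vector of $\beta(f)$ is $(a_1, \ldots, a_{n-1}) + (d_1, \ldots, d_{n-1}, d)\cdot M$, which by \eqref{modular matrix relation} equals $(a'_1, \ldots, a'_{n-1})$ modulo $m$. Since $\beta(f)$ and $f'$ are doubled paths factoring the same element $g$, they have the same pair differences and diagonal weight; as they now also share the same pair weights, they are literally the same factorization. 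Hence $\beta(f) = f'$, so $f$ and $f'$ lie in the same Hurwitz orbit.

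I do not expect a genuine obstacle here: the substance is entirely contained in the three preparatory propositions, and what remains is to match the symmetric-matrix data with the available operations. The one point that requires care is the verification that the effects genuinely add, which rests on the invariance of $d_1, \ldots, d_{n-1}$ and $d$ under all three operations (so that each operation is a fixed translation of the pair-weight vector, independent of the order of composition), together with the bookkeeping — aided by the symmetry of $M$ — confirming that the total translation is $(d_1, \ldots, d_{n-1}, d)\cdot M$. Negative entries of $M$ are handled uniformly by passing to inverse braids, and the symmetry constraint on $M$ is precisely what reflects the fact that a single $\tau_{i,j}$ adjusts $a_i$ and $a_j$ in tandem.
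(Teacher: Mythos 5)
Your proof is correct and matches the paper's argument essentially verbatim: the paper likewise applies $\sigma_{2i-1}^{m_{ii}}$, $\tau_{i,j}^{m_{ij}}$, and $\gamma_j^{m_{nj}}$ (in any order, by the additivity of their effects on pair weights) and concludes $\beta(f) = f'$ from the fact that a doubled path factoring a given diagonal element is determined by its pair weights. Your handling of the $d = 0$ case by omitting the $\gamma$-factors is the same fix the paper makes by zeroing out the last row of $M$.
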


\begin{proof}
Suppose that \eqref{modular matrix relation} holds.  If $d = 0$ (i.e., the weight of $g$ is $0$, and $f$ and $f'$ do not contain loops), redefine $M$ by setting $m_{nj} = 0$ for $j = 1, \ldots, n - 1$ (and leaving all other $m_{ij}$ the same);\footnote{This step insures that the operation $\gamma_i$ is only applied if it is well-defined, i.e., if the factorizations have length $2n - 1$.} then \eqref{modular matrix relation} still holds for the redefined $M$.  Then it follows from Propositions~\ref{prop operation one}, \ref{prop operation three}, and~\ref{prop operation four} that applying the operations
\begin{itemize}
\item $\sigma_{2i - 1}^{m_{ii}}$ for $1 \leq i \leq n-1$, 
\item $\tau_{i, j}^{m_{ij}}$ for $1 \leq i < j \leq n - 1$, and 
\item $\gamma_i^{m_{nj}}$ for $1 \leq j \leq n-1$
\end{itemize}
to $f$ in any order produces a doubled path $\beta(f)$ that factors $g$ and has pair weights $(a'_1, \ldots, a'_{n - 1})$. As remarked just before Definition~\ref{def pair weight and constant}, a doubled path that factors a given diagonal element is determined by its pair weights; therefore, since $\beta(f)$ has the same pair weights as $f'$, it follows that $\beta(f) = f'$, and so $f$ and $f'$ belong to the same Hurwitz orbit.
\end{proof}

\begin{remark}
Although 
the actions of the various $\tau_{i,j}$ and $\gamma_k$ commute when restricted to act on doubled paths, they do not generally commute as elements of the braid group, and so their actions may not commute on other factorizations.
\end{remark}

Next, we rephrase Proposition~\ref{prop h path in m} in an easier-to-work-with form.

\begin{corollary}
\label{prop m}
Suppose 
$f$
and
$f'$
are two doubled paths factoring the same element $g = [\id; (k_1, \ldots, k_n)]$ in $G(m, p, n)$, with respective pair weights $(a_1, \ldots, a_{n - 1})$ and $(a'_1, \ldots, a'_{n - 1})$.  Let $r = \gcd(m, k_1, \ldots, k_n)$.  If $a_{j}\equiv a'_{j} \pmod{r}$ for $j = 1, \ldots, n - 1$, then $f$ and $f'$ belong to the same Hurwitz orbit.
\end{corollary}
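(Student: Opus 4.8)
The plan is to deduce this from Proposition~\ref{prop h path in m} by showing that the hypothesis $a_j \equiv a_j' \pmod r$ guarantees the existence of a suitable matrix $M$. It is cleanest to package the three families of operations from Propositions~\ref{prop operation one}, \ref{prop operation three}, and~\ref{prop operation four} additively. Each records an increment to the pair-weight vector: $\sigma_{2i-1}$ adds the vector $d_i \mathbf{e}_i$, the operation $\tau_{i,j}$ adds $d_j \mathbf{e}_i + d_i \mathbf{e}_j$, and $\gamma_i$ adds $d\,\mathbf{e}_i$ (here $\mathbf{e}_i$ is the $i$th standard basis vector of $(\ZZ/m\ZZ)^{n-1}$, indexed by the pair weights). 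Since a doubled path factoring the fixed diagonal element $g$ is determined by its pair weights, two such paths lie in the same Hurwitz orbit as soon as the difference $\boldsymbol\delta = (a_1' - a_1, \ldots, a_{n-1}' - a_{n-1})$ lies in the subgroup $L \leq (\ZZ/m\ZZ)^{n-1}$ generated by all of these increment vectors. In the language of Proposition~\ref{prop h path in m}, membership $\boldsymbol\delta \in L$ is exactly the existence of $M$, with the symmetric top block of $M$ recording the $\sigma$- and $\tau$-coefficients and the bottom row recording the $\gamma$-coefficients. So the whole statement reduces to proving $(r\ZZ/m\ZZ)^{n-1} \subseteq L$.

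Before attacking this, I would record two preliminary facts. First, since $k_i = d_i - d_{i-1}$ (with $d_0 = 0$) and $d = d_{n-1} + k_n$, the tuples $(k_1, \ldots, k_n)$ and $(d_1, \ldots, d_{n-1}, d)$ are related by an invertible integer change of variables and hence generate the same subgroup of $\ZZ/m\ZZ$; thus $r = \gcd(m, k_1, \ldots, k_n) = \gcd(m, d_1, \ldots, d_{n-1}, d)$. In particular $r$ divides each of $d_1, \ldots, d_{n-1}, d$, so every generating vector of $L$ lies in $(r\ZZ/m\ZZ)^{n-1}$, giving the easy inclusion $L \subseteq (r\ZZ/m\ZZ)^{n-1}$. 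Second, $(r\ZZ/m\ZZ)^{n-1}$ is generated by the vectors $r\mathbf{e}_1, \ldots, r\mathbf{e}_{n-1}$, so it suffices to show $r\mathbf{e}_j \in L$ for each $j$.

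The heart of the matter -- and the step I expect to be the main obstacle -- is the symmetry constraint $m_{ij} = m_{ji}$: any attempt to write $\delta_j$ as a B\'ezout combination of the $d_i$ and $d$ and simply read off the coefficients of $M$ fails, because an off-diagonal coefficient $m_{ij}$ simultaneously feeds $d_i$ into coordinate $j$ and $d_j$ into coordinate $i$, so the $\tau$-moves cannot touch a single coordinate in isolation. The clean way around this is to localize. Because $(\ZZ/m\ZZ)^{n-1}$ and the subgroup $L$ both split as direct products over the prime-power factors of $m$, it suffices to prove $r\mathbf{e}_j \in L$ when $m = \ell^e$ is a prime power. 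In that case let $\rho = v_\ell(r) = \min\bigl(e, v_\ell(d_1), \ldots, v_\ell(d_{n-1}), v_\ell(d)\bigr)$, so that $r = \ell^{\rho}$. If $\rho = e$ then $r \equiv 0$ and there is nothing to prove; otherwise the minimum valuation is achieved by one of the generators, and I would split into two cases.

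If the minimum valuation is achieved by some pair difference $d_{i_0}$ with $i_0 \leq n-1$, then $d_{i_0} = r u$ for a unit $u \in (\ZZ/\ell^e\ZZ)^{\times}$, so the $\sigma$-increment $d_{i_0}\mathbf{e}_{i_0}$ already yields $r\mathbf{e}_{i_0} \in L$. For any other coordinate $j$, the $\tau_{i_0, j}$-increment equals $d_{i_0}\mathbf{e}_j + d_j\mathbf{e}_{i_0} = r u\,\mathbf{e}_j + d_j\mathbf{e}_{i_0}$; since $r \mid d_j$, the term $d_j\mathbf{e}_{i_0}$ is an integer multiple of $r\mathbf{e}_{i_0} \in L$ and can be subtracted off, leaving $r u\,\mathbf{e}_j \in L$, whence $r\mathbf{e}_j \in L$ after multiplying by $u^{-1}$. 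If instead the minimum is achieved only by the weight $d$, so that $d = r u'$ with $u'$ a unit, then the $\gamma_i$-increment $d\,\mathbf{e}_j$ gives $r\mathbf{e}_j \in L$ directly; note that $\rho < e$ forces $d \neq 0$, so these moves are genuinely available. In all cases $r\mathbf{e}_j \in L$, which establishes the reverse inclusion $(r\ZZ/m\ZZ)^{n-1} \subseteq L$ and hence, via Proposition~\ref{prop h path in m}, the desired Hurwitz equivalence.
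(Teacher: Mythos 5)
Your proof is correct, but it takes a genuinely different route from the paper's. The paper proves the corollary by \emph{globally constructing} an explicit integer matrix $M$ satisfying the hypotheses of Proposition~\ref{prop h path in m}: it picks B\'ezout coefficients $x_i$ with $\sum_i x_i d_i \equiv r \pmod m$ and integers $u_j$ with $a'_j - a_j \equiv u_j r \pmod m$, then sets $m_{ij} = u_j x_i + \frac{d_j}{\gcd(d_i,d_j)}y_{ij}$ off the diagonal, where the correction terms $y_{ij}$ are chosen to satisfy $d_i y_{ji} - d_j y_{ij} = (u_j x_i - u_i x_j)\gcd(d_i,d_j)$ precisely so that the symmetry constraint $m_{ij}=m_{ji}$ holds; the diagonal entries absorb the corrections so that each column sum telescopes to $u_j\sum_i x_i d_i \equiv u_j r$. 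You instead recast the existence of $M$ as membership of the difference vector in the subgroup $L \leq (\ZZ/m\ZZ)^{n-1}$ generated by the increment vectors $d_i\mathbf{e}_i$, $d_j\mathbf{e}_i + d_i\mathbf{e}_j$, and $d\,\mathbf{e}_i$ (a translation that is exact: diagonal entries of $M$ correspond to $\sigma$-exponents, the symmetric off-diagonal pairs to $\tau$-exponents, the bottom row to $\gamma$-exponents), and then prove $(r\ZZ/m\ZZ)^{n-1}\subseteq L$ by decomposing into primary components via CRT and running a minimal-valuation argument: locally, some generator $d_{i_0}$ or $d$ is a unit multiple of $r$, and the $\sigma$-, $\tau$-, or $\gamma$-increments then produce each $r\mathbf{e}_j$ directly. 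This correctly identifies and structurally resolves the real obstruction (the symmetry constraint, which the paper handles by the ad hoc $y_{ij}$ construction), and your observation that invoking $\gamma$ locally requires $v_\ell(d)<e_\ell$, hence $d\neq 0$ in $\ZZ/m\ZZ$, addresses the same degenerate case the paper dispatches with its footnote redefining the bottom row of $M$ to zero when $d=0$. What each approach buys: the paper's construction is globally explicit, handing you a concrete matrix $M$ and hence a concrete braid word; yours is conceptually cleaner and more explanatory, at the mild cost of the localization step, where you should say one more word justifying that $L$ splits compatibly --- e.g., that $L$ is closed under multiplication by the CRT idempotents of $\ZZ/m\ZZ$, so membership can be checked one prime at a time.
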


\begin{proof}
Suppose that $f$ and $f'$ are two doubled paths in $G(m, p, n)$ that factor $[\id; (k_1, \ldots, k_n)]$, with respective pair weights $(a_1, \ldots, a_{n - 1})$ and $(a'_1, \ldots, a'_{n - 1})$, and that $a_{j}\equiv a'_{j} \pmod{r}$ for $j = 1, \ldots, n - 1$. By Proposition~\ref{prop h path in m}, it suffices to produce an $n \times (n - 1)$ $\ZZ$-matrix $M = (m_{ij})$ such that $m_{ij} = m_{ji}$ for $i,j\in \{1, \ldots, n-1\}$ when $i\neq j$, and
\[
\begin{pmatrix} a_{1} & \cdots & a_{n-1}\end{pmatrix}
+
\begin{pmatrix} d_1 &  \cdots & 
d_{n - 1} & d_n
\end{pmatrix} \cdot M \equiv
\begin{pmatrix}a'_{1} &  \cdots & a'_{n-1}\end{pmatrix} \pmod{m},
\]
where $d_i = k_1 + \ldots + k_i$ for $i = 1, \ldots, n$.
In order to construct such a matrix $M$, we introduce several auxillary sequences of integers.  We will repeatedly make use of the following fact (an extended form of the Euclidean algorithm): if $(q_1, \ldots, q_k)$ is any sequence of positive integers, then there exist integers $(p_1, \ldots, p_k)$ such that $p_1q_1 + \ldots + p_k q_k = \gcd(q_1, \ldots, q_k)$.  Now fix particular positive integer representatives $k_1, \ldots, k_n$ of their equivalence classes modulo $m$, so that $d_1, \ldots, d_n$ are also positive integers.

Since $d_i = k_1 + \ldots + k_i$ for $i = 1, \ldots, n$, we have $r = \gcd(m, d_1, \ldots, d_n)$.  Therefore, there exists a tuple $(x_0, x_1, \ldots, x_n)$ of integers such that 
\[
x_0m + x_1 d_1 + \ldots + x_n d_n = r,
\]
and consequently
\begin{equation}
\label{eq:x}
x_1 d_1 + \ldots + x_n d_n \equiv r \pmod{m}.
\end{equation}
Furthermore, since $a'_j - a_j \equiv 0 \pmod{r}$ for $j = 1, \ldots, n-1$, we can choose integers $(u_1, \ldots, u_{n-1})$ such that 
\begin{equation}
\label{eq:u}
a'_j - a_j \equiv u_j r \pmod{m}.
\end{equation}
For each $i, j$ such that $i,j\in\{1, \ldots, n-1\}$ and $i < j$, choose integers $y_{ij}$ and $y_{ji}$ such that 
\begin{equation}
\label{eq:y}
d_i y_{ji} - d_j y_{ij} = (u_j x_i - u_i x_j)\gcd(d_i, d_j).
\end{equation}
Negating both sides of \eqref{eq:y} and interchanging the letters $i$ and $j$, we have that for this choice of values $y_{ij}$, Equation~\eqref{eq:y} is valid for $i > j$, as well.

Now for $i = 1, \ldots, n$ and $j = 1, \ldots, n - 1$, define
\[
m_{ij} = \begin{cases}
u_j x_i + \dfrac{d_j}{\gcd(d_i, d_j)} y_{ij} & \text{if } i \neq j, \\[12pt]
\displaystyle
u_j x_j - \sum\limits_{\substack{k \colon k \neq j, \\ 1 \leq k \leq n}} \frac{d_k}{\gcd(d_k, d_j)} y_{kj} & \text{if } i = j.
\end{cases}
\]
We claim that the matrix $M = (m_{ij})$ satisfies the desired requirements.

First, if $i,j\in \{1, \ldots, n-1\}$ and $i\neq j$, we have
\[
m_{ij} = u_j x_i + \frac{d_j}{\gcd(d_i, d_j)} y_{ij} \qquad \text{ and } \qquad
m_{ji} = u_i x_j + \frac{d_i}{\gcd(d_i, d_j)} y_{ji}.
\]
By \eqref{eq:y}, these are equal to each other, fulfilling the first condition.

Second, by carrying out the matrix multiplication, we have that the $j$th coordinate of
$
\begin{pmatrix} d_1 &  \cdots & d_n \end{pmatrix} \cdot M
$
is
\begin{align*}
\sum_{i = 1}^n d_i m_{ij} 
& = \sum_{\substack{i=1, \ldots, n\\i\neq j}} \left(u_j x_i + \frac{d_j}{\gcd(d_i, d_j)}y_{ij}\right)d_i
+ \left(u_j x_j - \sum_{\substack{i=1, \ldots, n\\i\neq j}} \frac{d_i}{\gcd(d_i, d_j)}y_{ij}\right)d_j \\
& = u_j \left(\sum_{i=1}^{n} x_i d_i \right)\\
& \equiv u_{j} r \pmod{m},
\end{align*}
where in the last step we use \eqref{eq:x}.  Then by \eqref{eq:u}, it follows that $M$ satisfies \eqref{modular matrix relation}.

The result follows by Proposition~\ref{prop h path in m}.
\end{proof}

We end this section with a sufficient condition for two standard form factorizations of an arbitrary element in $G(m, p, n)$ to be Hurwitz equivalent.

\begin{lemma}\label{lem h path for all}
Suppose that $f$ and $f'$ are two standard form factorizations of the same element $g\in G(m, p, n)$.  Suppose further that $\Pi_{f}=\Pi_{f'}$ and that for every part $B$, the pair weights $(a_1, \ldots, a_{|B|-1})$ and $(a'_1, \ldots, a'_{|B|-1})$ of the doubled paths in $f|_{B}$ and $f'|_{B}$ satisfy the condition that $a_{i}\equiv a'_{i} \pmod{r(B)}$ for every $i\in \{1, \ldots, |B|-1\}$.  Then $f$ and $f'$ are Hurwitz-equivalent.
\end{lemma}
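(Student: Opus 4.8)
The plan is to reduce, in stages, to the statement about doubled paths already proved in Corollary~\ref{prop m}. The first reduction is to a single part. By condition~(a) of Definition~\ref{def std form} the factors of $f$ (and of $f'$) are grouped into consecutive blocks indexed by the parts of $\Pi_f = \Pi_{f'}$, in the same order for both factorizations (the ordering of parts being fixed by the canonical choice of roots), and by Proposition~\ref{prop:components commute} factors in distinct parts commute. Since a Hurwitz move applied within one block never involves a factor from another block, it suffices to prove that $f|_B$ and $f'|_B$ are Hurwitz-equivalent for each part $B$ separately. So I may assume $\Pi$ consists of a single part $B$, with cycles $C_1, \ldots, C_c$ whose roots (smallest elements) are $v_1 < \cdots < v_c$ and whose weights are $k_1, \ldots, k_c$.

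The second step is a bookkeeping observation that pins down the diagonal part of a standard-form factorization. By Proposition~\ref{prop correct form}, $f|_B$ splits as a prefix consisting of the doubled path on $v_1, \ldots, v_c$ together with the loop (present exactly when $\wt(B) \neq 0$), followed by a forest whose $i$-th tree is built from weight-$0$ transposition-like factors, has vertex set the support of $C_i$, and contains $v_i$. Writing $D$ for the (diagonal) product of the prefix and $\mathcal{F}$ for the product of the forest, one has $g|_B = D \cdot \mathcal{F}$. Because $D$ is diagonal and each tree of $\mathcal{F}$ contributes total weight $0$ to the cycle it builds, a short computation with the wreath-product multiplication shows that the weight of $C_i$ in $g|_B$ equals the weight that $D$ places at $v_i$; hence $D$ carries weight exactly $k_i$ at $v_i$ and $0$ elsewhere. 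In particular $D$—and therefore the pair differences of the prefix, as well as the forest product $\mathcal{F} = D^{-1} g|_B$—is determined by $g$ and $\Pi$ alone, so these data coincide for $f$ and $f'$, and every cycle of $\mathcal{F}$ has weight $0$.

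The third step is to standardize the forest. Since $f$ and $f'$ share the same forest product $\mathcal{F}$, I want to Hurwitz-transform the forest of $f$ into that of $f'$ using only moves among the forest factors, which form a consecutive suffix and so leave the prefix (in particular the pair weights) untouched. Factors from different trees commute, so after separating the trees into consecutive sub-blocks it is enough to treat one weight-$0$ cycle at a time. For a single weight-$0$ cycle, a minimal factorization is a tree, whose edge weights are uniquely determined by the underlying transpositions (Remark~\ref{rmk:construct a std factorization}); projecting to $\Symm$ and invoking Kluitmann's theorem (Theorem~\ref{Kluitmann}), any two such underlying trees are Hurwitz-equivalent, and since Hurwitz moves commute with projection and preserve the tree and weight-$0$ structure, this equivalence lifts. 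Thus both $f$ and $f'$ are Hurwitz-equivalent to factorizations whose forests are one and the same $\mathcal{F}_0$.

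Finally, $f$ and $f'$ are now equivalent to factorizations that agree in the forest $\mathcal{F}_0$ and whose prefixes are doubled paths factoring the same diagonal element $[\id; (k_1, \ldots, k_c)]$ on the roots, with pair weights agreeing modulo $r(B) = \gcd(m, k_1, \ldots, k_c)$ by hypothesis. Applying Corollary~\ref{prop m} to these prefixes—whose moves $\sigma_{2i-1}$, $\tau_{i,j}$, $\gamma_i$ all act within the prefix and hence fix $\mathcal{F}_0$—makes the two prefixes equal, closing the chain of Hurwitz-equivalences from $f$ to $f'$. I expect the main obstacle to be the forest-standardization step: one must verify that the per-cycle Kluitmann lift really yields weight-$0$ cycle factorizations uniquely pinned down by their projections, and that the block-local moves of steps three and four genuinely leave the complementary block undisturbed. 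The weight bookkeeping of step two, forcing the diagonal part to be common to $f$ and $f'$, is precisely what makes this clean separation possible.
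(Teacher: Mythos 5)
Your proof is correct and follows essentially the same route as the paper's: reduce to a single part of $\Pi_f=\Pi_{f'}$, match the doubled-path prefixes via Corollary~\ref{prop m}, and match the forests by lifting Kluitmann's theorem (Theorem~\ref{Kluitmann}) through the projection $\pi$, using the fact that edge weights in a forest factorization are determined by the product. The only difference is cosmetic: you standardize the forests before the prefixes, which is why you need your step-two observation that the diagonal prefix product (and hence the forest product) is already common to $f$ and $f'$, whereas the paper applies Corollary~\ref{prop m} first, after which the forest products agree automatically.
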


\begin{proof}
Let $f$ and $f'$ be as in the statement, with $\Pi \defeq \Pi_f = \Pi_{f'}$ the common cycle partition of $g$, and $B$ a part of $\Pi$.  Consider the restrictions $f|_B$ and $f'|_B$, two factorizations of the same element $g|_B$.  By Proposition~\ref{prop correct form}, $f|_{B}$ and $f'|_{B}$ both consist of a doubled path of length $k \defeq 2 |B| - 2$ or $2|B| - 1$, depending on the weight of $g|_B$, followed by a collection of factors whose corresponding edges in the factorization graph form a forest with $|B|$ components.

By Corollary~\ref{prop m}, there is a sequence $\beta_1$ of Hurwitz moves that act only on the first $k$ factors such that the first $k$ factors in $\beta_1(f'|_B)$ are equal to the corresponding factors in $f|_B$.  We now ``freeze'' these factors and work only with the forests.

By Proposition~\ref{prop:components commute}, if two factors correspond to edges in different components of the forest, they commute.  Consequently, we may choose an arbitrary ordering $C_1, \ldots, C_{|B|}$ on the cycles in $B$ and then choose braids $\beta_2$ and $\beta_3$ so that $\beta_2\beta_1(f'|_B)$ and $\beta_3(f|_B)$ consist first of $|C_1| - 1$ factors whose product is $C_1$, then $|C_2| - 1$ factors whose product is $C_2$, and so on.  Taking projections, the factors that factor a cycle generate the symmetric group on the support of the cycle; consequently, applying Theorem~\ref{Kluitmann}, there is another braid $\beta_4$ such that $\beta_4\beta_2\beta_1(f'|_B)$ and $\beta_3(f|_B)$ have the same factorization graph.  Moreover, it is easy to see that the weights of the edges in a factorization whose graph is a forest are determined by the product; since the two products are equal, it follows that $\beta_3^{-1}\beta_4\beta_2\beta_1(f'|_B) = f|_B$.

By Definition~\ref{def std form}(a), the factors in $f'|_B$ form a consecutive subsequence, so the Hurwitz moves in the braid $\beta_3^{-1}\beta_4\beta_2\beta_1$ may be lifted to Hurwitz moves acting on the full factorization $f'$.  Successively repeating this process for each part $B$ of $\Pi$ gives a Hurwitz path from $f'$ to $f$, as needed.
\end{proof}

\subsection{When are two standard form factorizations not Hurwitz-equivalent?}\label{subgroups}

The main result of this section is Lemma~\ref{lemma std in p}, which gives a necessary condition for two factorizations to lie in the same Hurwitz orbit (the converse of Lemma~\ref{lem h path for all}).  Our main tool is an invariant that distinguishes different Hurwitz orbits.

\begin{definition}
Given a factorization $f$ of an element in $G(m, n, p)$, denote by $G_f$ the subgroup of $G(m, n, p)$ generated by the factors in $f$.
\end{definition}

It is easy to see that $G_f$ is preserved by Hurwitz moves.  Our first result on the structure of $G_f$ is a completely straightforward consequence of Proposition~\ref{prop:components commute}.

\begin{prop}
\label{prop:components commute 2}
If $f$ is a reflection factorization that induces the cycle partition $\Pi_f$, we have that $G_f$ is the direct product of the restrictions $G_{f|_B}$ to individual parts $B$ of $\Pi_f$.
\end{prop}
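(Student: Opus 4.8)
The plan is to deduce Proposition~\ref{prop:components commute 2} directly from Proposition~\ref{prop:components commute}, which already contains all the group-theoretic content we need. First I would recall the setup: the factorization $f$ induces a cycle partition $\Pi_f$ whose parts $B$ correspond to the connected components $\Gamma_f|_B$ of the factorization graph, and for each part $B$ the subfactorization $f|_B$ consists exactly of those factors whose edges lie in that component. Since $G_f$ is by definition generated by all factors of $f$, and each factor belongs to precisely one $f|_B$, the subgroups $G_{f|_B}$ together generate $G_f$; that is, $G_f = \langle G_{f|_B} : B \in \Pi_f \rangle$. This is immediate from the definitions and requires no real argument.

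The substance of the proposition is that this generation is actually a \emph{direct} product, and for that I would verify the two standard conditions for an internal direct product of subgroups. The first condition is that elements of distinct factors commute: if $B \neq B'$, then every generator of $G_{f|_B}$ commutes with every generator of $G_{f|_{B'}}$ by the first assertion of Proposition~\ref{prop:components commute}, and commuting on generators extends to commuting on the whole subgroups. The second condition is the triviality of the pairwise (indeed the appropriate multi-way) intersections. Here I would use the second assertion of Proposition~\ref{prop:components commute}, which pins down the shape of elements of $G_{f|_B}$: the support of $B$ is a subset of $\{1, \ldots, n\}$, the factors in $f|_B$ act trivially (as the identity) off the support of $B$, so every element of $G_{f|_B}$ is supported on the support of $B$ and equals the identity matrix outside of it. Since the supports of distinct parts $B$ are disjoint (they partition the underlying vertex set), an element lying in $G_{f|_B}$ and simultaneously expressible using the other factors must be supported on disjoint index sets, forcing it to be the identity. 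This gives the required triviality of intersections and completes the identification of $G_f$ as the internal direct product $\prod_{B \in \Pi_f} G_{f|_B}$.

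To make the intersection argument clean, I would phrase it in terms of supports: define the support of a part $B$ to be the union of the entries of its cycles, note these supports are pairwise disjoint and cover $\{1, \ldots, n\}$ because $\Pi_f$ is a set partition of the cycles of $g$, and observe that each $G_{f|_B}$ lands inside the ``block'' subgroup of monomial matrices acting only within the indices of $B$'s support. These block subgroups for distinct parts intersect trivially and commute entrywise, so the natural map $\prod_{B} G_{f|_B} \to G_f$ is an isomorphism. The only step that needs any care, rather than being purely formal, is making sure that commuting and disjoint-support facts from Proposition~\ref{prop:components commute} are correctly applied to generators and then promoted to the full subgroups; but this is routine. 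The main (mild) obstacle is bookkeeping the supports and confirming that the forward map is surjective, which follows once we know $G_f$ is generated by the $G_{f|_B}$ and that these generate independently; no genuinely hard point arises, since Proposition~\ref{prop:components commute} has already done the heavy lifting.
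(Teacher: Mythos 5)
Your proof is correct and follows exactly the route the paper intends: the paper omits the argument, declaring the proposition a ``completely straightforward consequence'' of Proposition~\ref{prop:components commute}, and your write-up simply fills in the standard internal-direct-product verification (generation by the $G_{f|_B}$, commutation of generators from distinct parts, and trivial intersections via the pairwise disjoint supports). No discrepancy with the paper's approach and no gaps.
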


Consequently, in what follows, we focus on factorizations $f$ for which the factorization graph $\Gamma_f$ is connected, so that the induced cycle partition $\Pi_f$ has only one part.  Also, by Lemma~\ref{lemma any in p}, it suffices to consider the case of factorizations in standard form.

\begin{prop}
\label{prop:isomorphic to a Gmpn}
Let $f$ be a connected standard form factorization of an element $g \in G(m, p, n)$, let $d$ be the weight of $g$, and let $r$ be as in Definition~\ref{def r star}.  Then $G_f \cong G\left(\frac{m}{r}, \frac{\gcd(m, d)}{r}, n\right)$.  More concretely, there is a diagonal element $\delta \in G(m, 1, n)$ such that the conjugation map $\phi_\delta$ defined by $\phi_\delta(g) = \delta g \delta^{-1}$ restricts to an isomorphism $\phi_\delta: G_f \overset{\sim}{\longrightarrow} G\left(\frac{m}{r}, \frac{\gcd(m, d)}{r}, n\right)$.
\end{prop}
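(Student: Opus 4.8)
The plan is to identify the abstract group $G(m/r, \gcd(m,d)/r, n)$ with a concrete subgroup of $G(m,1,n)$ and then to pin down $\phi_\delta(G_f)$ as exactly that subgroup. Since the $(m/r)$th roots of unity are precisely the $m$th roots of unity whose weight is divisible by $r$, multiplying all weights by $r$ gives an isomorphism from $G(m/r, \gcd(m,d)/r, n)$ onto the subgroup
\[
H \defeq \bigl\{[u; (a_1, \ldots, a_n)] \in G(m,1,n) : r \mid a_\ell \text{ for all } \ell, \ \textstyle\sum_\ell a_\ell \equiv 0 \pmod{\gcd(m,d)}\bigr\}.
\]
(Here one uses $r \mid \gcd(m,d)$, which holds because $r$ divides $m$ and every cycle weight, hence divides $d = \wt(g) = k_1 + \cdots + k_c$.) Thus it suffices to produce a diagonal $\delta$ with $\phi_\delta(G_f) = H$.

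First I would record a clean generating set for $\phi_\delta(G_f)$. By Proposition~\ref{prop correct form}, the connected graph $\Gamma_f$ is built from a spanning forest (one component per cycle of $g$, so $c = |B|$ components), together with the doubled edges joining consecutive cycle representatives $v_1, \ldots, v_c$ and possibly one loop at $v_c$. Discarding the second copy of each doubled edge yields a spanning tree $T$. Because conjugation by $\delta = [\id; (\delta_1, \ldots, \delta_n)]$ sends $[(i\,j); a]$ to $[(i\,j); a + \delta_j - \delta_i]$ while fixing every diagonal factor, I can solve for $\delta$ edge-by-edge along $T$ so that every factor corresponding to a tree edge acquires weight $0$. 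After this conjugation the generators of $\phi_\delta(G_f)$ are: the bare transpositions $[(i\,j); 0]$ along $T$, which generate $\Symm_n$; the discarded doubled edges, now equal to $[(v_i\, v_{i+1}); d_i]$ with $d_i = k_1 + \cdots + k_i$ (the pair difference is conjugation-invariant); and, if $d \neq 0$, the loop $[\id; (0, \ldots, 0, d, 0, \ldots, 0)]$ at $v_c$.

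Next I would compute the diagonal subgroup $A = \phi_\delta(G_f) \cap (\ZZ/m\ZZ)^n$. Multiplying a discarded doubled edge by its tree partner gives the diagonal element of weight $\pm d_i$ at $v_i, v_{i+1}$; conjugating by $\Symm_n$ gives $d_i(\mathbf{e}_a - \mathbf{e}_b)$ for all $a \neq b$, and the loop gives $d\,\mathbf{e}_a$ for all $a$. Since $r \mid d_i$ and $r \mid d$, each lies in $H$, giving $\phi_\delta(G_f) \subseteq H$. The hard part is the reverse containment. The key identity is
\[
r = \gcd(m, k_1, \ldots, k_c) = \gcd\bigl(\gcd(m, d_1, \ldots, d_{c-1}),\ \gcd(m, d)\bigr),
\]
which follows from $\gcd(m, d_1, \ldots, d_{c-1}) = \gcd(m, k_1, \ldots, k_{c-1})$ together with $d \equiv k_c$ modulo this quantity. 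A Bézout combination of the available elements $\gcd(m, d_1, \ldots, d_{c-1})(\mathbf{e}_a - \mathbf{e}_b)$ and $\gcd(m,d)(\mathbf{e}_a - \mathbf{e}_b)$ then produces $r(\mathbf{e}_a - \mathbf{e}_b) \in A$ for all $a, b$; combined with $\gcd(m,d)\,\mathbf{e}_a \in A$ and the decomposition $a = \sum_{\ell} a_\ell(\mathbf{e}_\ell - \mathbf{e}_n) + (\sum_\ell a_\ell)\mathbf{e}_n$, this shows $A$ contains the whole diagonal subgroup of $H$.

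Finally, since each generator of $\phi_\delta(G_f)$ lies in $\Symm_n \cdot A$ and $A$ is $\Symm_n$-stable, I conclude $\phi_\delta(G_f) = A \rtimes \Symm_n = H$, as desired. I expect the main obstacle to be exactly the reverse containment for the diagonal subgroup: getting $r(\mathbf{e}_a - \mathbf{e}_b)$ into $A$ is where the definition of $r$ as a gcd over \emph{all} cycle weights (not merely the partial sums $d_i$) is essential, and it requires combining the ``root'' contributions of the doubled path with the ``loop'' contribution via the gcd identity above.
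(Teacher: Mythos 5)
Your proposal is correct and follows essentially the same route as the paper's proof: the same spanning tree (the first edge of each doubled pair plus the forest), the same edge-by-edge construction of the diagonal element $\delta$, the same identification of the conjugated generators $[(i\,j);0]$, $[(v_i\,v_{i+1});d_i]$, and the weight-$d$ loop, and the same B\'ezout argument combining the pair differences $d_i$ with the loop weight $d$ to reach $r$. The only cosmetic difference is the endgame: where the paper quotes the generation facts that $G(m/r,m/r,n)$ is generated over $\Symm_n$ by $[(12);r]$ and that adjoining a weight-$d$ diagonal element then yields $G\left(\frac{m}{r},\frac{\gcd(m,d)}{r},n\right)$, you instead compute the full diagonal subgroup directly and conclude via the semidirect-product decomposition of $H$ --- the same computation, packaged differently.
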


\begin{proof}
First, consider the following sub-collection of edges of $\Gamma_f$: for each pair of factors in the doubled path with same underlying transposition, include the edge corresponding to the first, and also include all edges not in the doubled path.  Since $f$ is a connected standard form factorization, we have by Proposition~\ref{prop correct form} that these edges form a spanning tree of $\Gamma_f$.

Choose a linear order on the edges of the tree, as follows: the first edge is arbitrary, and each subsequently chosen edge should share an endpoint with a previously chosen edge.  Without loss of generality, let the edges in this order correspond to the reflections
\[
t_1 = [(i_1 \, j_1); a_1], \quad t_2 = [(i_2 \, j_2); a_2], \quad \ldots
\]
where for $k > 1$ we have that $i_k$ belongs to the set $\{i_1, j_1, j_2, \ldots, j_{k - 1}\}$ of previously chosen vertices, while $j_k$ does not.
Iteratively choose the weights of the diagonal entries of $\delta$, as follows: the $i_1$ entry has weight $0$, and for $k = 1, 2, \ldots$, the $j_k$ entry is chosen with the unique value so that $\delta t_k \delta^{-1}$ is a true transposition (i.e., $\delta t_k \delta^{-1} = [(i_k \, j_k); 0]$).  We claim this is the desired element.

We first show that $G_{\delta f \delta^{-1}} \subseteq G\left(\frac{m}{r}, \frac{\gcd(m, d)}{r}, n\right)$.  Let $c = \cyc(g)$.  By the choice of $\delta$, the first $2(c - 1)$ factors of $\delta f \delta^{-1}$ are
\begin{multline*}
\Big([(v_{1}\, v_{2}); 0], [(v_{1}\, v_{2});d_{1}], [(v_{2}\, v_{3}); 0], [(v_{2}\,v_{3}); d_{2}], \ldots,\\ [(v_{c-1}\, v_{c}); 0], [(v_{c-1} \, v_{c}); d_{c-1}]\Big),
\end{multline*}
and if the weight $d_c \defeq d$ of $g$ is nonzero then the $(2c - 1)$th factor is a diagonal reflection of weight $d$ in the $v_{c}$th position.  Multiplying out this prefix produces a diagonal element whose weight in position $v_i$ is $k_i \defeq d_1 + \ldots + d_i$ and whose other entries have weight $0$.  By Proposition~\ref{prop correct form} and the choice of $\delta$, the remaining factors of $\delta f \delta^{-1}$ form a forest of true transpositions, each component of which contains exactly one vertex $v_i$.  The product of such a set of factors is a permutation matrix such that each cycle contains exactly one of the $v_i$.  Consequently, in the product $\delta g \delta^{-1}$ of $\delta f \delta^{-1}$, the $k_i$ are precisely the cycle weights.  Since $d_1 = k_1$, $d_i = k_i - k_{i - 1}$ for $i > 1$, and $r = \gcd(m, k_1, \ldots, k_{c})$, it follows immediately that each $d_i$ is a multiple of $r$, and consequently that the weight of every entry appearing in all the factors in $\delta f \delta^{-1}$ belongs to $r\ZZ / m \ZZ \cong \ZZ/(m/r)\ZZ$.  Thus $G_{\delta f \delta^{-1}} \subset G(m/r, 1, n)$.  Moreover, the weights of all of the factors in $\delta f \delta^{-1}$ are not only multiples of $r$ but (stronger) multiples of $d$, i.e., they belong to $\gcd(m, d) \ZZ/ m\ZZ \cong \ZZ / (m/\gcd(m, d)) \ZZ$.  Thus in fact 
\[
G_{\delta f \delta^{-1}} 
\subset 
G\left(\frac{m}{r}, \frac{m/r}{m/\gcd(m, d)}, n\right) 
= 
G\left(\frac{m}{r}, \frac{\gcd(m, d)}{r}, n\right),
\]
as claimed.

For the reverse inclusion, we first note that $G\left(m/r, m/r, n\right) \subset G(m, 1, n)$ is generated over $\Symm_n$ by the reflection $[(12); r]$ \cite[\S2.7]{LehrerTaylor}.  The factors that make up the true transpositions in $\delta f \delta^{-1}$ generate the group $\Symm_n$ and so $\Symm_n \subseteq G_{\delta f \delta^{-1}}$.  By conjugating the other transposition-like factors in $\delta f \delta^{-1}$ by appropriate transpositions, we may produce the factors
\[
[(12); d_1], \quad [(12); d_2], \quad \ldots, \quad [(12); d_{c - 1}].
\]
If $d_c = d = 0$ then $[(12); d_c] \in \Symm_n$.  Otherwise, our factorization includes a diagonal reflection of weight $d_c$; conjugating it by an appropriate permutation produces $[\id; (d_c, 0, \ldots, 0)]$, and conjugating $[(12); 0]$ by this reflection produces $[(12); d_c]$.  Thus, in either case, we have in $G_{\delta f \delta^{-1}}$ the set $\{[(12); d_i] \colon i = 1, \ldots, c\}$.  Multiplying each of these reflections by $[(12); 0]$ produces the elements
\begin{equation}
\label{eq:penultimate factors}
[\id; (d_1, -d_1, 0, \ldots, 0)], \quad \ldots, \quad [\id; (d_n, -d_n, 0, \ldots, 0)].
\end{equation}
Since $r = \gcd(m, k_1, \ldots, k_c) = \gcd(m, d_1, \ldots, d_c)$, there exists a product of powers of the elements in \eqref{eq:penultimate factors} equal to $[\id; (r, -r, 0, \ldots, 0)]$, and multiplying by $[(12); 0]$ finally produces the desired reflection $[(12); r]$.  Consequently $G(m/r, m/r, n) \subseteq G_{\delta f \delta^{-1}}$.

If $d = 0$, the preceding arguments complete the proof.  If $d \neq 0$, then it suffices to observe that $G(m, p, n)$ is generated over $G(m, m, n)$ by any element of $G(m, 1, n)$ whose weight generates $p\ZZ/m\ZZ$.  Making the appropriate substitutions, since $G_{\delta f \delta^{-1}}$ contains $G(m/r, m/r, n)$ and contains an element of $G(m/r, 1, n)$ of weight $d$, it follows that $G(m/r, \gcd(m, d)/r, n) \subseteq G_{\delta f \delta^{-1}}$.  This completes the proof.
\end{proof}

Since diagonal matrices commute, the following consequence is immediate.

\begin{cor}
\label{cor:diagonal}
If $f$ is a connected standard form factorization of an element $g$ in $G(m, p, n)$, then the diagonal subgroup of $G_{f}$ is equal to the diagonal subgroup of $G\left(\frac{m}{r}, \frac{\gcd(m, d)}{r}, n\right)$.
\end{cor}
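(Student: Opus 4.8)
The plan is to deduce the statement directly from Proposition~\ref{prop:isomorphic to a Gmpn}, whose conjugating element $\delta$ does all the work. Write $D$ for the group of diagonal matrices, so that ``the diagonal subgroup of $H$'' means $H \cap D$, and abbreviate $G' \defeq G\left(\frac{m}{r}, \frac{\gcd(m, d)}{r}, n\right)$. By Proposition~\ref{prop:isomorphic to a Gmpn} there is a diagonal element $\delta \in G(m, 1, n)$ such that conjugation $\phi_\delta \colon x \mapsto \delta x \delta^{-1}$ restricts to an isomorphism $\phi_\delta \colon G_f \overset{\sim}{\longrightarrow} G'$. The goal is then simply to show $G_f \cap D = G' \cap D$.

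First I would record two elementary properties of $\phi_\delta$. Since diagonal matrices commute, $\phi_\delta$ fixes every element of $D$ pointwise; that is, $\phi_\delta|_D = \id$. Second, conjugation by a diagonal matrix changes only the nonzero entries of a monomial matrix, not the pattern of positions in which they occur, so $\phi_\delta$ preserves underlying permutations; in particular an element $x$ is diagonal if and only if $\phi_\delta(x)$ is.

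With these two facts in hand, the equality is a short set-chase. For the forward inclusion, if $x \in G_f \cap D$ then $\phi_\delta(x) \in G'$ because $\phi_\delta(G_f) = G'$, while $\phi_\delta(x) = x$ because $x$ is diagonal; hence $x \in G' \cap D$. For the reverse inclusion, if $y \in G' \cap D$ then, since $\phi_\delta$ maps $G_f$ onto $G'$, there is $x \in G_f$ with $\phi_\delta(x) = y$; because $y$ is diagonal and $\phi_\delta$ preserves underlying permutations, $x$ is diagonal, and then $y = \phi_\delta(x) = x \in G_f$, so $y \in G_f \cap D$. Combining the two inclusions gives $G_f \cap D = G' \cap D$, which is the claimed equality of diagonal subgroups.

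There is no genuine obstacle here: the only point requiring any care is to notice that $\phi_\delta$ is not merely an abstract isomorphism but, when restricted to diagonal elements, the \emph{literal identity map}, so that the two diagonal subgroups coincide as subgroups of $D$ rather than being merely isomorphic. This is exactly the content of the parenthetical ``since diagonal matrices commute'' preceding the statement, and it is what promotes the abstract isomorphism of Proposition~\ref{prop:isomorphic to a Gmpn} into an honest equality on the diagonal part.
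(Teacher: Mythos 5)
Your proof is correct and is exactly the argument the paper intends: the paper derives Corollary~\ref{cor:diagonal} from Proposition~\ref{prop:isomorphic to a Gmpn} with the single remark ``since diagonal matrices commute,'' and your write-up simply makes explicit that $\phi_\delta$ fixes $D$ pointwise and preserves underlying permutations, so the isomorphism restricts to the identity on diagonal parts. No differences in approach, and no gaps.
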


\begin{proposition}
\label{prop:same group implies modular condition}
Suppose that $f$ and $f'$ are two connected standard form factorizations of the same element $g \in G(m, p, n)$, having respective pair weights $(a_1, \ldots)$ and $(a'_1, \ldots)$.  If $G_f = G_{f'}$, then $a_i \equiv a'_i \pmod{r}$ for $i = 1, \ldots, \cyc(g) - 1$.
\end{proposition}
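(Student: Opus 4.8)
The plan is to exploit the explicit conjugation isomorphism furnished by Proposition~\ref{prop:isomorphic to a Gmpn}. Because $f$ and $f'$ are \emph{connected} factorizations of the same element $g$, their induced partitions each consist of a single part, and so the integer $r = r(B)$, the weight $d = \wt(g)$, and hence the target group $H \defeq G\!\left(\tfrac{m}{r}, \tfrac{\gcd(m,d)}{r}, n\right)$ depend only on $g$; in particular they are the same for $f$ and for $f'$. Proposition~\ref{prop:isomorphic to a Gmpn} thus produces diagonal elements $\delta, \delta' \in G(m,1,n)$ with $\delta G_f \delta^{-1} = H = \delta' G_{f'} (\delta')^{-1}$. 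The construction of $\delta$ in the proof of that proposition is arranged precisely so that conjugation by $\delta$ turns the first factor of each doubled pair into a \emph{true} transposition. Writing $\delta = [\id; (\delta_1, \ldots, \delta_n)]$ and recalling that this first factor is $[(v_i\,v_{i+1}); a_i]$, the conjugation rule for diagonal elements,
\[
[\id;(e_1, \ldots, e_n)] \cdot [(k\,\ell); a] \cdot [\id;(e_1, \ldots, e_n)]^{-1} = [(k\,\ell); a + e_\ell - e_k]
\]
(an immediate computation from the wreath-product multiplication), gives $a_i = \delta_{v_i} - \delta_{v_{i+1}}$ and likewise $a'_i = \delta'_{v_i} - \delta'_{v_{i+1}}$ for $i = 1, \ldots, \cyc(g) - 1$.

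I would then bring in the hypothesis $G_f = G_{f'}$. It yields $\delta^{-1} H \delta = G_f = G_{f'} = (\delta')^{-1} H \delta'$, so the diagonal element $\epsilon \defeq \delta (\delta')^{-1} = [\id; (e_1, \ldots, e_n)]$, with $e_k = \delta_k - \delta'_k$, normalizes $H$.

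The crux is to pin down which diagonal elements normalize $H$. The group $H$ is generated by $\Symm_n$, its diagonal subgroup, and all transposition-like reflections whose weight parameter is a multiple of $r$ (these last being precisely the transposition-like reflections contained in $H$ under the standard embedding into $G(m,1,n)$). The diagonal generators commute with $\epsilon$, while the displayed conjugation rule sends a transposition-like reflection $[(k\,\ell); w]$ to $[(k\,\ell); w + e_\ell - e_k]$, which again lies in $H$ if and only if $e_\ell - e_k \equiv 0 \pmod r$. Hence $\epsilon$ normalizes $H$ exactly when $e_k \equiv e_\ell \pmod r$ for all $k, \ell$. Substituting into the identities from the first paragraph,
\[
a_i - a'_i = (\delta_{v_i} - \delta'_{v_i}) - (\delta_{v_{i+1}} - \delta'_{v_{i+1}}) = e_{v_i} - e_{v_{i+1}} \equiv 0 \pmod{r},
\]
which is the assertion of the proposition.

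I expect the main obstacle to be the normalizer computation of the third paragraph: one must set up the conjugation formula with correct signs and choose a generating set of $H$ for which membership after conjugation collapses to the single congruence $e_k \equiv e_\ell \pmod r$. By contrast, the reduction of $G_f = G_{f'}$ to a normalizing diagonal element, and the dictionary between the pair weights and the entries of $\delta$, are routine consequences of Proposition~\ref{prop:isomorphic to a Gmpn} and its proof.
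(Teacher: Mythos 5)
Your proof is correct, but it reaches the congruence by a genuinely different mechanism than the paper does. The paper's proof never touches the conjugating elements: it takes the first factor of each doubled pair in $f$ and in $f'$ and multiplies them out to get two $c$-cycles $C = [(1\,2); a_1] \cdots [(c-1 \; c); a_{c-1}]$ and $C'$ (with $c = \cyc(g)$), observes that $G_f = G_{f'}$ puts both in $G_f$, and then notes that the diagonal element $C \cdot (C')^{-1}$ -- whose weights are exactly the differences $a_i - a'_i$ together with one balancing entry -- must lie in the diagonal subgroup of $G_f$, which by Corollary~\ref{cor:diagonal} equals the diagonal subgroup of $G\left(\frac{m}{r}, \frac{\gcd(m,d)}{r}, n\right)$; hence every $a_i - a'_i \equiv 0 \pmod{r}$. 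You instead pass to the conjugators $\delta, \delta'$ of Proposition~\ref{prop:isomorphic to a Gmpn}, show $\epsilon = \delta(\delta')^{-1}$ normalizes $H$, and compute the diagonal normalizer of $H$ inside $G(m,1,n)$. The trade-off: the paper's argument uses only the \emph{statement} of Corollary~\ref{cor:diagonal} and stays entirely inside $G_f$, whereas yours must open up the proof of Proposition~\ref{prop:isomorphic to a Gmpn} to extract the dictionary $a_i = \delta_{v_i} - \delta_{v_{i+1}}$ (legitimate, since the construction there does straighten exactly the first factor of each doubled pair, and both $f$ and $f'$ have doubled paths on the same vertex sequence $v_1 < \cdots < v_c$ by Definition~\ref{def std form}); in exchange you get a reusable side result, namely that a diagonal element of $G(m,1,n)$ normalizes the embedded $G\left(\frac{m}{r}, \frac{\gcd(m,d)}{r}, n\right)$ if and only if its entries are pairwise congruent modulo $r$. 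One simplification you could make: the "main obstacle" you anticipate is lighter than you fear, because the proposition only needs the forward implication of your normalizer claim, and that follows from conjugating a single true transposition $[(k\,\ell); 0] \in H$ and noting the result $[(k\,\ell); e_\ell - e_k]$ lies in $H$ only if $e_\ell - e_k \equiv 0 \pmod{r}$; the generating-set discussion (which is itself correct, as $H$ is the semidirect product of its diagonal subgroup with $\Symm_n$) is needed only for the converse direction, which you never use.
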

\begin{proof}
Let $c = \cyc(g)$.  By hypothesis, $f$ and $f'$ respectively contain factors
\[
[(v_1 \, v_2); a_1], \ldots, [(v_{c - 1} \, v_c); a_{c - 1}]
\qquad \text{ and } \qquad
[(v_1 \, v_2); a'_1], \ldots, [(v_{c - 1} \, v_c); a'_{c - 1}].
\]
For convenience, and without loss of generality, assume $v_i = i$ for $i = 1, \ldots, c$.  Then $G_f$ contains the element
\begin{multline*}
C \defeq [(1\cdots c); (a_1, \ldots, a_{c - 1}, -(a_1 + \ldots + a_{c - 1}), 0, \ldots, 0)] \\
 = [(1 2); a_1] \cdots [(c - 1 \, c); a_{c - 1}]
\end{multline*}
and similarly $G_{f'}$ contains the element
\[
C' \defeq [(1\cdots c); (a'_1, \ldots, a'_{c - 1}, -(a'_1 + \ldots + a'_{c - 1}), 0, \ldots, 0)]
.
\]
Since $G_f = G_{f'}$, we have $C, C' \in G_{f}$ and consequently
\[
C \cdot (C')^{-1} = [\id; (a_1 - a'_1, \ldots, a_{c - 1} - a'_{c - 1}, a'_1 + \ldots + a'_{c - 1} - a_1 - \ldots - a_{c - 1}, 0, \ldots, 0)]
\]
belongs to $G_f$ as well.  Since $C \cdot (C')^{-1}$ is diagonal, it follows from Corollary~\ref{cor:diagonal} that $C \cdot (C')^{-1} \in G(m/r, \gcd(m, d)/r, n)$.  Thus the weight of each entry in $C \cdot (C')^{-1}$ is a multiple of $r$.  In particular, $a_i - a'_i \equiv 0 \pmod{r}$, as claimed.
\end{proof}

We are now in position to give the necessary condition for two factorizations to belong to the same Hurwitz orbit.

\begin{lemma}\label{lemma std in p}
Suppose that $f$ and $f'$ are two standard form factorizations of the same element $g\in G(m, p, n)$.  Suppose further that $f$ and $f'$ are Hurwitz-equivalent.  Then $\Pi_{f}=\Pi_{f'}$.  Moreover, for every part $B$ in this partition, the pair weights $(a_1, \ldots, a_{|B|-1})$ and $(a'_1, \ldots, a'_{|B|-1})$ of the doubled paths in $f|_{B}$ and $f'|_{B}$ satisfy the condition that $a_{i}\equiv a'_{i} \pmod{r(B)}$ for every $i\in \{1, \ldots, |B|-1\}$.
\end{lemma}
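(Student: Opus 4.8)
The plan is to establish the two conclusions—equality of cycle partitions and the modular condition on pair weights—by producing Hurwitz-invariant quantities and reading off the required constraints from them. Since $f$ and $f'$ are Hurwitz-equivalent by hypothesis, any invariant of the Hurwitz action must agree on $f$ and $f'$, so the whole strategy is to identify invariants strong enough to force the stated conclusions.

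First I would prove $\Pi_f = \Pi_{f'}$. The key observation is that the factorization graph's connected components are preserved by Hurwitz moves (as already noted in the proof of Lemma~\ref{lemma any in p}), and hence so is the induced cycle partition. More precisely, a single Hurwitz move replaces a pair $(t_i, t_{i+1})$ by $(t_{i+1}, t_{i+1}^{-1} t_i t_{i+1})$; the underlying transpositions (or loops) of the new pair act on exactly the same vertices as the old pair, so the set partition of $\{1, \ldots, n\}$ given by the connected components of $\Gamma_f$ is unchanged. Since $\Pi_f$ is exactly the cycle partition read off from these components, we get $\Pi_f = \Pi_{f'}$ immediately.

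Next, having fixed the common partition $\Pi \defeq \Pi_f = \Pi_{f'}$, I would reduce to the connected (one-part) case. By Proposition~\ref{prop:components commute 2}, the subgroup $G_f$ decomposes as a direct product over the parts $B$ of $\Pi$, with the $B$-factor equal to $G_{f|_B}$; and since distinct components commute, a Hurwitz path from $f$ to $f'$ restricts to a Hurwitz path from $f|_B$ to $f'|_B$ for each $B$ (after using the component-preservation above to align the pieces). Because $G_f$ is visibly preserved by Hurwitz moves, Hurwitz equivalence of $f$ and $f'$ forces $G_{f|_B} = G_{f'|_B}$ for every part $B$. This is the crucial step that makes the subgroup available as an invariant on each part separately.

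Finally, I would apply the machinery already built in Section~\ref{subgroups} to each part. Working inside a single part $B$, both $f|_B$ and $f'|_B$ are connected standard form factorizations of the same element $g|_B$, and we have just shown $G_{f|_B} = G_{f'|_B}$. This is exactly the hypothesis of Proposition~\ref{prop:same group implies modular condition}, whose conclusion is precisely that the pair weights satisfy $a_i \equiv a'_i \pmod{r(B)}$ for $i = 1, \ldots, |B| - 1$ (noting that $|B| - 1$ equals $\cyc(g|_B) - 1$ since the cycles in $B$ are exactly the nontrivial cycles of $g|_B$). The main obstacle in this argument is the careful bookkeeping needed to lift a Hurwitz path between the full factorizations to Hurwitz paths on each restricted part $f|_B$; the component-preservation property and the commuting of distinct components (Proposition~\ref{prop:components commute}) make this routine, but one must verify that a braid acting on the full tuple genuinely restricts to a well-defined braid on each block, which is why establishing $\Pi_f = \Pi_{f'}$ first is essential.
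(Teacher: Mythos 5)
Your proposal is correct, and its overall architecture matches the paper's: reduce to the parts $B$ of the common partition, observe that a Hurwitz path between the full factorizations restricts to one between $f|_B$ and $f'|_B$ (so in particular $G_{f|_B} = G_{f'|_B}$), and then invoke Proposition~\ref{prop:same group implies modular condition} part by part. The one place where you genuinely diverge is the first conclusion, $\Pi_f = \Pi_{f'}$: you derive it directly from the elementary fact that Hurwitz moves preserve the connected components of the factorization graph (a fact the paper states, as ``easy to see,'' in the proof of Lemma~\ref{lemma any in p}), whereas the paper instead routes through the subgroup invariant: it uses $G_f = G_{f'}$, the direct product decomposition of Proposition~\ref{prop:components commute 2}, and the orbit of the coordinate lines $\CC e_i$ under $G_{f|_B}$ via Proposition~\ref{prop:isomorphic to a Gmpn} to match up components. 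Your route is more elementary and self-contained; the paper's version makes the single invariant $G_f$ carry the entire argument, which dovetails with the reformulation in Theorem~\ref{thm:invariants}. Two small points of care: your justification that ``the underlying transpositions (or loops) of the new pair act on exactly the same vertices as the old pair'' is not literally true edge-by-edge --- conjugation can replace the edge $\{a,b\}$ by $\{(c\,d)(a), (c\,d)(b)\}$ --- but the union of the two edges' vertex sets, and hence the component partition, is preserved, which is all you need; and to apply Proposition~\ref{prop:same group implies modular condition} you need $f|_B$ and $f'|_B$ to factor the \emph{same} element $g|_B$, which follows either because the restricted Hurwitz path preserves products or from the explicit description of $g|_B$ in Proposition~\ref{prop:components commute}, and is worth saying explicitly (the paper does this via the uniqueness of the decomposition $g = \prod_B g|_B$).
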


\begin{proof}
It is easy to see that for any Hurwitz move $\sigma$, we have $G_f = G_{\sigma(f)}$.  Since $f$ and $f'$ are Hurwitz-equivalent, it follows that $G_f = G_{f'}$.  By Proposition~\ref{prop:components commute 2}, this group can be written as internal direct product over the components of the induced cycle partitions:
\begin{equation}
\label{eq:direct product}
G_f = \prod_{B \in \Pi_f} G_{f|_B} = \prod_{B' \in \Pi_{f'}} G_{f'|_{B'}}.
\end{equation}
It follows that the cycle partitions $\Pi_f$ and $\Pi_{f'}$ must be equal: indeed, if $e_i$ represents a standard basis vector for $\CC^n$, then by Proposition~\ref{prop:isomorphic to a Gmpn}, the orbit of the line $\CC e_i$ under $G_{f|_B}$ consists of all $\CC e_j$ such that $j$ is a vertex of $\Gamma_{f|_B}$, and so the components containing $i$ in $\Gamma_{f}$ and $\Gamma_{f'}$ have the same vertex set.  Thus the two partitions $\Pi_f$ and $\Pi_{f'}$ are equal.  Then it follows from \eqref{eq:direct product} that for each part $B$ of $\Pi_f = \Pi_{f'}$, we have $G_{f|_B} = G_{f'|_B}$.  Moreover, since the decomposition \eqref{eq:direct product} is a direct product, we can write $g = \prod_{B \in \Pi_f} g|_B$ uniquely for some elements $g|_B \in G_{f|_B} = G_{f'|B}$, and for each part $B$ we have that $f|_B$ and $f'|_B$ are reflection factorizations of $g|_B$.

Now fix a part $B$ of the common cycle partition $\Pi_f = \Pi_{f'}$, supported on a set of size $n'$.  Setting aside the (irrelevant) labeling of the vertices, $f|_B$ and $f'|_B$ are connected standard form factorizations of the same element $g|_B$ in $G_{f|_B} = G_{f'|_B} \subseteq G(m, 1, n')$.  Furthermore, any Hurwitz move respects the decomposition \eqref{eq:direct product}, so the hypothesis that $f$ and $f'$ are Hurwitz-equivalent implies that also the restrictions $f|_B$ and $f'|_B$ are Hurwitz-equivalent.  Then the result follows from Proposition~\ref{prop:same group implies modular condition}, applied separately to each part $B$.
\end{proof}

\subsection{Proofs of main theorem and corollaries}
\label{sec:proofs}

In this section, we assemble the work of the preceding sections to prove the first main result and its corollaries.  For convenience, we restate the results here.

\begin{thmm}
[\ref{main theorem}]
Given an element $g \in G(m, p, n)$, the number of Hurwitz orbits of its shortest factorizations is given by 
\[
\sum_{\Pi\in\Parmax(g)} \prod_{B\in \Pi} (r(B))^{|B|-1}.
\]
\end{thmm}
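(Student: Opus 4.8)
The plan is to assemble the results of the preceding subsections, which together reduce the enumeration to a bookkeeping exercise. The first step is to pass from arbitrary shortest factorizations to standard form factorizations: by Lemma~\ref{lemma any in p}, every Hurwitz orbit of shortest factorizations of $g$ contains at least one factorization in standard form, so the number of orbits equals the number of Hurwitz-equivalence classes \emph{among standard form factorizations}. This reduction is the crucial move, since it lets us work entirely with the discrete combinatorial data attached to standard forms rather than with arbitrary tuples of reflections.

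Next I would invoke the two-sided criterion for Hurwitz equivalence of standard forms. By Proposition~\ref{prop correct number}, each standard form factorization $f$ induces a maximum cycle partition $\Pi_f \in \Parmax(g)$. Combining Lemma~\ref{lem h path for all} (sufficiency) with Lemma~\ref{lemma std in p} (necessity), two standard form factorizations $f$ and $f'$ lie in the same orbit if and only if $\Pi_f = \Pi_{f'}$ and, for every part $B$ of the common partition, the pair weights of the doubled paths in $f|_B$ and $f'|_B$ agree modulo $r(B)$ in each coordinate. Thus a Hurwitz orbit is completely determined by the data consisting of a maximum cycle partition $\Pi$ together with, for each part $B \in \Pi$, the residues modulo $r(B)$ of the $|B|-1$ pair weights of the doubled path supported on $B$. (When $|B| = 1$ the doubled path is empty and there are no pair weights, so such a part carries no data; the fixed loop weight $\wt(B)$, when present, is determined by $g$ and contributes nothing further.)

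It then remains to count this data. For a fixed $\Pi \in \Parmax(g)$ and a fixed part $B$, the pair weights of a doubled path on $B$ form a tuple in $(\ZZ/m\ZZ)^{|B|-1}$, and the construction in Remark~\ref{rmk:construct a std factorization} shows that these may be chosen arbitrarily, since both the product and the forest portion of the factorization are unaffected by the choice of the $a_i$. Consequently every residue tuple in $(\ZZ/r(B)\ZZ)^{|B|-1}$ is realized by some standard form factorization, giving exactly $r(B)^{|B|-1}$ distinct contributions from the part $B$; multiplying over the parts of $\Pi$ and summing over $\Pi \in \Parmax(g)$ yields the claimed value $\sum_{\Pi \in \Parmax(g)} \prod_{B \in \Pi} r(B)^{|B|-1}$.

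The genuine mathematical content lives entirely in the three input results --- Lemma~\ref{lemma any in p}, Lemma~\ref{lem h path for all}, and Lemma~\ref{lemma std in p} --- so the proof of the theorem itself is purely combinatorial assembly. The only point demanding a moment's care is the realizability claim, namely that for each maximum cycle partition and each admissible residue tuple there actually exists a standard form factorization achieving it; this is precisely what Remark~\ref{rmk:construct a std factorization} supplies, and it guarantees both that the count is an upper bound (distinct data give distinct orbits, by the necessity direction) and that it is exact (all data are achieved).
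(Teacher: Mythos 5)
Your proposal is correct and matches the paper's own proof essentially step for step: the same reduction to standard forms via Lemma~\ref{lemma any in p}, the same equivalence criterion from Lemmas~\ref{lem h path for all} and~\ref{lemma std in p} combined with Proposition~\ref{prop correct number}, and the same realizability argument via Remark~\ref{rmk:construct a std factorization}. The only cosmetic difference is that the paper also explicitly cites Proposition~\ref{prop correct form} to justify speaking of \emph{the} doubled path in each restriction $f|_B$, a point you use implicitly.
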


\begin{proof}
Fix an element $g$ in $G(m, p, n)$.
By Lemma~\ref{lemma any in p}, every Hurwitz orbit of minimum-length reflection factorizations of $g$ contains at least one factorization in standard form, so it suffices to determine when two standard form factorizations of $g$ belong to the same Hurwitz orbit.  By Proposition~\ref{prop correct number}, each standard form factorization $f$ of $g$ induces a maximum cycle partition $\Pi_f$ of $g$.  By Proposition~\ref{prop correct form}, if $f$ is in standard form and $B$ is a part of $\Pi_f$ then we may speak of \emph{the} doubled path in the restricted factorization $f|_B$.  By Lemmas~\ref{lem h path for all} and~\ref{lemma std in p}, two standard form factorizations $f$ and $f'$ belong to the same Hurwitz orbit if and only if (1) they induce the same cycle partition $\Pi \defeq \Pi_f = \Pi_{f'}$ and (2) for each part $B$ of $\Pi$, the sequence of $|B| - 1$ pair weights of the doubled paths of $f|_B$ and $f'|_B$ are congruent modulo $r(B)$.  The construction of Remark~\ref{rmk:construct a std factorization} guarantees that each of these congruence classes is realized by a standard form factorization.  Thus, there are $r(B)^{|B| - 1}$ inequivalent ways to choose the restricted factorization $f|_B$; these choices are independent for the different parts $B$ of $\Pi$, so there are $\prod_{B \in \Pi} r(B)^{|B| - 1}$ orbits that induce the cycle partition $\Pi$.  Summing over all maximum cycle partitions gives the result.
\end{proof}

\begin{corr}
[\ref{cor transitive}]
Let $g\in G(m,p,n)$. The shortest factorizations of $g$ form a single orbit under the Hurwitz action if and only if $\Parmax(g) = \{\Pi\}$ is a singleton set and either ${|B|} = 1$ or $r(B)=1$ for every $B\in\Pi$.
\end{corr}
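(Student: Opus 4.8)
The plan is to read the statement directly off the enumerative formula in Theorem~\ref{main theorem}. The Hurwitz action is transitive on the shortest factorizations of $g$ precisely when there is a single Hurwitz orbit, that is, precisely when the count
\[
N(g) \defeq \sum_{\Pi\in\Parmax(g)} \prod_{B\in \Pi} (r(B))^{|B|-1}
\]
equals $1$. So the entire argument reduces to characterizing when $N(g) = 1$, which is an elementary fact about sums and products of positive integers.

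The key observation is that $N(g)$ is a sum of positive integers, each of which is at least $1$. Indeed, for every part $B$ we have $r(B) = \gcd(m, k_1, \ldots, k_{|B|}) \geq 1$, and since every part is nonempty the exponent $|B| - 1$ is a nonnegative integer; hence each factor $(r(B))^{|B|-1}$ is a positive integer, and therefore so is each product $\prod_{B \in \Pi}(r(B))^{|B|-1}$ and the whole sum. A finite sum of positive integers equals $1$ if and only if the sum has exactly one term and that term equals $1$.

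Having exactly one term means $\Parmax(g)$ is a singleton, say $\Parmax(g) = \{\Pi\}$. The single remaining term $\prod_{B \in \Pi}(r(B))^{|B|-1}$ is a product of positive integers, so it equals $1$ if and only if each factor $(r(B))^{|B|-1}$ equals $1$. Finally, $(r(B))^{|B|-1} = 1$ exactly when either the exponent vanishes (i.e., $|B| = 1$) or the base is $1$ (i.e., $r(B) = 1$). Assembling these equivalences yields the stated criterion.

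I expect no genuine obstacle: the corollary is an immediate consequence of Theorem~\ref{main theorem} together with the triviality that a finite sum of positive integers equals $1$ iff it consists of a single summand equal to $1$. The only point requiring any care is the elementary bound $(r(B))^{|B|-1} \geq 1$, which holds because $r(B) \geq 1$ and the exponent is a nonnegative integer, so that no term can be zero or negative and the ``single summand equal to $1$'' analysis applies.
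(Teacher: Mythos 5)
Your proposal is correct and follows exactly the paper's own argument: the paper likewise deduces the corollary from Theorem~\ref{main theorem} by observing that a sum of positive integers equals $1$ only when there is a single summand whose factors $(r(B))^{|B|-1}$ all equal $1$. You have merely spelled out the elementary positivity details slightly more explicitly than the paper does.
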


\begin{proof}
Since $r(B)$ and $|B|$ are necessarily positive integers, the only way that $\sum_{\Pi\in\Parmax(g)} \prod_{B\in \Pi} (r(B))^{|B|-1}$ can be equal to $1$ is if there is only one summand, and each factor in the product is equal to $1$.  The result follows immediately.
\end{proof}

\begin{corr}
[\ref{cor single cycle}]
If $g\in G(m, p, n)$ has a single cycle, then the shortest factorizations of $g$ form a single orbit under the Hurwitz action.
\end{corr}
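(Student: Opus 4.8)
The plan is to derive Corollary~\ref{cor single cycle} directly from the enumeration in Theorem~\ref{main theorem} together with the transitivity criterion in Corollary~\ref{cor transitive}. The key observation is that if $g \in G(m, p, n)$ has a single cycle, then $\pi(g)$ is an $n$-cycle, so $g$ has exactly $\cyc(g) = 1$ cycle. Consequently, \emph{every} cycle partition of $g$ has exactly one part (there is only one cycle to partition), and in particular $\Parmax(g)$ is automatically a singleton $\{\Pi\}$, where $\Pi = \pt{C}$ consists of the lone part $B$ containing the single cycle $C$.

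With this in hand, I would simply check the two conditions of Corollary~\ref{cor transitive}. The first condition — that $\Parmax(g)$ is a singleton — holds for free by the preceding observation. For the second condition, I note that the unique part $B$ contains exactly $c = |B| = 1$ cycle, so the condition ``$|B| = 1$ or $r(B) = 1$'' is satisfied by its first clause. Therefore both hypotheses of Corollary~\ref{cor transitive} are met, and the Hurwitz action is transitive on the shortest reflection factorizations of $g$.

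Alternatively, one can read the conclusion straight off the formula in Theorem~\ref{main theorem}: the sum has a single term (since $\Parmax(g)$ is a singleton), and that term is the empty product $\prod_{B \in \Pi} r(B)^{|B| - 1} = r(B)^{1 - 1} = r(B)^0 = 1$, since the exponent $|B| - 1$ vanishes when $|B| = 1$. Hence the total number of Hurwitz orbits is exactly $1$, which is precisely the assertion of transitivity.

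I do not anticipate any real obstacle here: the entire content is the remark that a single-cycle element has only one cycle to partition, forcing both the uniqueness of the maximum partition and the triviality of its single block. The slightly subtle point worth stating explicitly is that ``single cycle'' refers to a single \emph{cycle} of the underlying permutation (equivalently $\cyc(g) = 1$), not to $g$ acting as an $n$-cycle on all of $\{1, \ldots, n\}$ in some stronger sense; but since $\cyc(g)$ counts exactly the cycles of $\pi(g)$, a single cycle does force $|\Pi| = 1$ for every cycle partition, so this causes no difficulty.
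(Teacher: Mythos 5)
Your proof is correct and matches the paper's own argument: both observe that a single-cycle element has exactly one cycle partition, whose unique part $B$ satisfies $|B| = 1$, and then invoke Corollary~\ref{cor transitive}. Your extra remark that the corresponding term in Theorem~\ref{main theorem} is $r(B)^{0} = 1$ is a harmless elaboration of the same reasoning.
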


\begin{proof}
Let $g\in G(m, p, n)$ with a single cycle. Then there is only one cycle partition of $g$, whose unique part contains the unique cycle of $g$. The result follows from Corollary~\ref{cor transitive}.
\end{proof}

\begin{corr}
[\ref{cor special}]
Let $g\in G(m,1,n)$.  Then the shortest factorizations of $g$ form a single orbit under the Hurwitz action if and only if $g$ does not have two cycles of nonzero weight whose weights sum to $0$.
\end{corr}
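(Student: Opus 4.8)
The plan is to deduce this from Corollary~\ref{cor transitive}, which reduces the question to understanding $\Parmax(g)$ together with the quantities $|B|$ and $r(B)$ for the parts $B$. Since $p = 1$, every set partition of the cycles of $g$ is a cycle partition, so I am simply maximizing $v(\Pi) = |\Pi| + v_m(\Pi)$ over \emph{all} set partitions of the cycle set of $g$.

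First I would establish the characterization of maximum partitions promised in Remark~\ref{rmk:shi special cases}. Write $c = \cyc(g)$ and let $z$ be the number of cycles of $g$ of weight $0$. I claim that $v(\Pi) \le c + z$ for every partition $\Pi$, with equality if and only if each part of $\Pi$ is either a single cycle or a pair of cycles of nonzero weight whose weights sum to $0 \pmod m$. The key is a \emph{local} estimate: for a part $B$ containing $z_B$ cycles of weight $0$, the amount it contributes to $v(\Pi)$ (which is $1$ if $\wt(B) \not\equiv 0$ and $2$ if $\wt(B) \equiv 0 \pmod m$) is at most its ``budget'' $|B| + z_B$, the value its cycles would contribute if they were all made singletons. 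A short case analysis on $|B|$ proves this and pins down the equality cases: equality is automatic when $|B| = 1$; when $|B| = 2$ it forces both cycles to have nonzero weight with sum $0$; and when $|B| \ge 3$, or when $|B| = 2$ but some cycle of $B$ has weight $0$, the inequality is strict. Summing over the parts gives $v(\Pi) \le \sum_B (|B| + z_B) = c + z$ with the stated equality condition. (As a byproduct this also yields the simplified formula $\lR(g) = n - z$ recorded in Remark~\ref{rmk:shi special cases}.)

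With the characterization in hand the corollary follows quickly. If $g$ has no two cycles of nonzero weight summing to $0 \pmod m$, then no part of size $2$ can be of the allowed type, so the \emph{only} maximum partition is the all-singletons partition; thus $\Parmax(g)$ is a singleton and every part $B$ of its unique member has $|B| = 1$, so Corollary~\ref{cor transitive} immediately yields transitivity (the condition $r(B) = 1$ never even needs to be checked). Conversely, if cycles $C, C'$ of nonzero weight satisfy $\wt(C) + \wt(C') \equiv 0 \pmod m$, then both the all-singletons partition and the partition obtained from it by merging $C$ with $C'$ are of the allowed form, hence maximum, and they are distinct; so $\Parmax(g)$ is not a singleton and Corollary~\ref{cor transitive} shows the Hurwitz action is not transitive.

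The only real work is the per-part inequality, and the main obstacle there is being careful to treat \emph{all} the ways a part of size $\ge 2$ can fail to be an allowed doubleton (nonzero weights that do not sum to zero, the presence of a weight-$0$ cycle, or three or more cycles in the part), each of which must be shown to make the local inequality strict, so that no ``exotic'' maximum partition can slip through and spoil the clean dichotomy above.
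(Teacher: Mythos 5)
Your proposal is correct and takes essentially the same route as the paper: both arguments establish the characterization of maximum cycle partitions (every part is a singleton or a pair of nonzero-weight cycles whose weights sum to $0$) and then apply Corollary~\ref{cor transitive} identically in both directions. Your per-part ``budget'' inequality $v$-contribution $\le |B| + z_B$ is just a repackaging of the paper's split-into-singletons comparison, so the two proofs differ only in presentation.
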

\begin{proof}
Fix an element $g\in G(m,1,n)$ with cycle weights $(k_1, \ldots, k_{\cyc(g)})$ and a cycle partition $\Pi$ of $g$.  If any part $B$ of $\Pi$ contains three or more cycles, we may form a new cycle partition $\Pi'$ by splitting $B$ into $|B|$ singleton parts.  Since the part $B$ contributes at most $2$ to $v(\Pi)$, it follows that $v(\Pi') \geq 3 + (v(\Pi) - 2) > v(\Pi)$, so $\Pi$ is not maximum.  Similarly, if $\Pi$ contains a part with exactly two cycles whose weights do not sum to zero, or a part with two cycles of weight zero, splitting the part into singletons increases the value.  On the other hand, any partition $\Pi$ in which all parts consist either of a single cycle or of two cycles whose weights are nonzero but add to zero has $v(\Pi) = \cyc(g) + \#\{i \colon k_i = 0\}$ and so is a maximum cycle partition of $g$.

It follows from the preceding paragraph that if $g$ has two cycles of nonzero weight whose weights sum to zero, then $g$ has at least two maximum cycle partitions (the partition into all singletons; or, group those two cycles together) and so more than one Hurwitz orbit of shortest reflection factorizations (by Corollary~\ref{cor transitive}).  Conversely, if $g$ does not have two such cycles, then it follows from the preceding paragraph that the unique maximum cycle partition of $g$ is the partition into singleton parts, and so $g$ has a unique Hurwitz orbit of shortest reflection factorizations (again by Corollary~\ref{cor transitive}).  This completes the proof.
\end{proof}

\section{A reformulation in terms of invariants}
\label{sec:invariants}

In this section, we reformulate the work in Section~\ref{sec:main} to give a criterion to tell when two minimum-length reflection factorizations of an arbitrary element in $G(m, p, n)$ belong to the same Hurwitz orbit.  We then discuss the extent to which this can be extended to the other complex reflection groups. Recall that for a factorization $f$, we denote by $G_f$ the group generated by the factors in $f$.

\begin{theorem}\label{thm:invariants}
Let $G = G(m, p, n)$ and let $g$ be any element of $G$.  Two minimum-length reflection factorizations $f$ and $f'$ of $g$ lie in the same Hurwitz orbit if and only if $G_f = G_{f'}$.
\end{theorem}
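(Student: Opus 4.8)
The plan is to prove the two implications separately, with the forward direction essentially free and the converse carrying the real content. For the forward direction I would invoke the observation (already used in the proof of Lemma~\ref{lemma std in p}) that every Hurwitz move $\sigma$ satisfies $G_{\sigma(f)} = G_f$, since the image tuple generates the same subgroup; hence Hurwitz-equivalent factorizations generate the same subgroup, giving $G_f = G_{f'}$ whenever $f$ and $f'$ lie in a common orbit.

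For the converse, suppose $G_f = G_{f'}$. First I would reduce to standard form: by Lemma~\ref{lemma any in p} there exist standard form factorizations $\tilde f$ and $\tilde f'$ Hurwitz-equivalent to $f$ and $f'$ respectively, and by the forward direction $G_{\tilde f} = G_f = G_{f'} = G_{\tilde f'}$. Since Hurwitz-equivalence is transitive, it suffices to connect $\tilde f$ and $\tilde f'$, so I may assume from the outset that $f$ and $f'$ are in standard form with $G_f = G_{f'}$.

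The key point is then that the argument in the proof of Lemma~\ref{lemma std in p} uses the hypothesis of Hurwitz-equivalence \emph{only} to deduce the equality $G_f = G_{f'}$; every subsequent step is a consequence of that equality alone, so it applies verbatim under the present hypothesis. Concretely, Proposition~\ref{prop:components commute 2} writes $G_f$ as the internal direct product $\prod_{B} G_{f|_B}$ over the parts of $\Pi_f$, and Proposition~\ref{prop:isomorphic to a Gmpn} shows that $G_{f|_B}$ acts on the coordinate lines $\CC e_j$ with a single orbit supported exactly on the vertices of $\Gamma_{f|_B}$. Comparing orbits on both sides of $\prod_{B \in \Pi_f} G_{f|_B} = \prod_{B' \in \Pi_{f'}} G_{f'|_{B'}}$ forces the supports of the parts to agree, whence $\Pi_f = \Pi_{f'}$; matching factors of the direct product then gives $G_{f|_B} = G_{f'|_B}$ for each part $B$. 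Applying Proposition~\ref{prop:same group implies modular condition} part-by-part yields the congruences $a_i \equiv a'_i \pmod{r(B)}$ on the pair weights, and finally Lemma~\ref{lem h path for all} converts these congruences into an explicit Hurwitz path from $f$ to $f'$, completing the proof.

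The step I expect to be the crux is recovering the cycle partition from the subgroup, namely that $G_f = G_{f'}$ forces $\Pi_f = \Pi_{f'}$. This is exactly where Proposition~\ref{prop:isomorphic to a Gmpn} does the real work: it pins down not merely the isomorphism type of each $G_{f|_B}$ but its action on coordinate lines, so that the two direct-product decompositions induced by $\Pi_f$ and $\Pi_{f'}$ are forced into alignment. Once the partitions agree, the remaining modular analysis is a direct citation of the earlier propositions and requires no new computation.
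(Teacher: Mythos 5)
Your proposal is correct and follows essentially the same route as the paper's proof: the trivial forward direction, reduction to standard form via Lemma~\ref{lemma any in p}, recovery of the common cycle partition from $G_f = G_{f'}$ (via the direct-product decomposition of Proposition~\ref{prop:components commute 2} and the coordinate-line orbit argument from Proposition~\ref{prop:isomorphic to a Gmpn}), then Proposition~\ref{prop:same group implies modular condition} part-by-part and Lemma~\ref{lem h path for all} to build the Hurwitz path. Your explicit observation that the proof of Lemma~\ref{lemma std in p} uses Hurwitz-equivalence only through the equality $G_f = G_{f'}$ is precisely the point the paper exploits, and you spell out the partition-recovery step in slightly more detail than the paper's terse citation does.
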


\begin{proof}
It has already been observed that the group $G_f$ is preserved by the Hurwitz action. 
Conversely, suppose $G = G(m, p, n)$ and $g \in G$.  Consider two minimum-length factorizations $f$ and $f'$ of $g$ such that $G_f = G_{f'}$.  By Proposition~\ref{lemma any in p}, we may as well assume that $f$ and $f'$ are in standard form.  By Proposition~\ref{prop:components commute 2}, it follows that $f$ and $f'$ induce the same cycle partition $\Pi$ of $g$.  By Proposition~\ref{prop:same group implies modular condition}, it follows that for each part $B$ of $\Pi$, the pair weights of the doubled paths in the restricted factorizations $f|_B$ and $f'|_B$ are congruent modulo $r(B)$.  Finally, by Lemma~\ref{lem h path for all}, it follows that $f$ and $f'$ are Hurwitz-equivalent.
\end{proof}

\begin{remark}
\label{rem:BGRW invariants}
In \cite[Cor.~1.3]{BGRW}, Baumeister--Gobet--Roberts--Wegener prove that if $G$ is any Coxeter group and $f$ is a minimum-length reflection factorization of an element $g \in G$ such that $G_f$ is finite, then all minimum-length $G_f$-reflection factorizations of $g$ belong to a single Hurwitz orbit.  This immediately implies the hard direction of Theorem~\ref{thm:invariants} when $G$ is a finite \emph{real} reflection group.
\end{remark}

In what follows, we consider the exceptional complex reflection groups.  In the Shephard--Todd indexing scheme, these are named $G_4, G_5, \ldots, G_{37}$; the groups $G_{23}$, $G_{28}$, $G_{30}$, $G_{35}$, $G_{36}$, and $G_{37}$ are the exceptional \emph{real} reflection groups of types $\type{H}_3$, $\type{F}_4$, $\type{H}_4$, $\type{E}_6$, $\type{E}_7$, and $\type{E}_8$, respectively.  Most of the exceptional groups ($G_4$ through $G_{22}$) are rank $2$, meaning they act irreducibly on a two-dimensional space; the largest non-real exceptional group $G_{34}$ has rank $6$ and cardinality $39191040$.

Suppose that $G$ is an arbitrary complex reflection group, $g \in G$, and $f$ and $f'$ are two minimum-length reflection factorizations of $g$ that generate the same subgroup $G_f = G_{f'}$.  By exhaustive computation in the non-real examples, we have verified that, if $G_f$ is isomorphic to any of the exceptional groups $G_6$, $G_7$, $G_9$, $G_{11}$, $G_{12}$, $G_{13}$, $G_{14}$, $G_{15}$, $G_{19}$, $G_{21}$, $G_{22}$, $G_{23}$, $G_{24}$, $G_{27}$, $G_{28}$, $G_{29}$, $G_{30}$, $G_{31}$, $G_{33}$, $G_{35}$, $G_{36}$, or $G_{37}$,
then it follows that $f$ and $f'$ are Hurwitz-equivalent.  The next remarks discuss the situation in the remaining exceptional complex reflection groups, all of which contain some element $g$ with multiple Hurwitz orbits of shortest factorizations that generate the whole group.

\begin{example}
\label{rem:G16}
Consider\footnote{We are grateful to Theodosius Douvropoulos for this example.} the exceptional group $G = G_{16}$, which is generated by two reflections $a, b$ subject to the relations $a^5 = b^5 = 1$, $aba = bab$.  The group may be represented by matrices so that both generators have non-unit eigenvalue $\omega = \exp(2\pi i/5)$.  The element $g \defeq a^2 b^3$ is not a reflection (e.g., because it has determinant $1$) and so $f_1 = (a^2, b^3)$ is a shortest reflection factorization of $g$.  Clearly $f_1$ generates $G$.  It is slightly more work to check that $f_2 = (a^{-1}ba, b^{-2} a^{-1} b^2)$ is another shortest reflection factorization of $g$
that also generates $G$.
The factors in $f_1$ have determinants $\omega^2$ and $\omega^3$, while those in $f_2$ have determinants $\omega$ and $\omega^4$, so they lie in different conjugacy classes.  However, applying a Hurwitz move to a factorization $f$ does not change its multiset of $G_f$-conjugacy classes: the multiset of factors changes by one element, and the element that changes is conjugated by a reflection in $G_f$.  Thus $f_1$ and $f_2$ lie in different Hurwitz orbits.
\end{example}

\begin{remark}
\label{rem:counter-examples}
In \cite{JBL}, it was conjectured that the two invariants $G_f$ and the multiset of $G_f$-conjugacy classes of factors were sufficient to distinguish factorizations of every element in \emph{every} complex reflection group.\footnote{
We mention the earlier work that informed this conjecture: for symmetric groups, it can be proved by extending the work of Kluitmann~\cite{Kluitmann} to cover factorizations with disconnected graphs, as in \cite{BIT}.  It was proved for dihedral groups by Berger \cite{Berger11}, and for the ``tetrahedral family'' of rank-$2$ complex reflection groups ($G_4$, $G_5$, $G_6$, $G_7$) by T.\ Minnick, C.\ Pirillo, S.\ Racile and E.\ Wang (unpublished; personal communication).  Theorem~\ref{thm:invariants} and Remark~\ref{rem:BGRW invariants} establish the case of minimum-length factorizations in $G(m, p, n)$ and in real reflection groups, respectively.}
By brute-force computer calculations (using SageMath \cite{Sage}, including its interface with GAP \cite{GAP} and Chevie \cite{Chevie}), we discovered that the conjecture \emph{fails} in $G_{32}$ and $G_{34}$. 
We briefly describe these counter-examples now.

The group $G_{32}$ is generated by four reflections $a, b, c, d$ subject to the relations $a^3 = b^3 = c^3 = d^3 = 1$, $aba = bab$, $bcb = cbc$, $cdc = dcd$, $ac = ca$, $ad = ca$, and $bd = db$.  The element $ab^{-1}cdabcab^{-1}a^{-1}cdbc$ has reflection length $5$.  Its 
minimum-length reflection factorizations fall into five Hurwitz orbits.
The factorizations
\begin{align*}
& ( a b c^{-1} d c b^{-1} a^{-1}, \;
a^{-1} b c d c^{-1} b^{-1} a, \;
b c d c^{-1} b^{-1}, \;
b a^{-1} b c b^{-1} a b^{-1}, \;
b^{-1} c d c^{-1} b
), \\
&(
b a^{-1} b c d c^{-1} b^{-1} a b^{-1}, \;
b c b^{-1}, \;
a b a^{-1}, \;
b^{-1} c^{-1} d c b, \;
a b c b^{-1} a^{-1}
), \\
\intertext{and}
&(b^{-1} c b, \;
a^{-1} b c b^{-1} a, \;
b c^{-1} d c b^{-1}, \;
a b^{-1} c^{-1} d c b a^{-1}, \;
b),
\end{align*}
lie in three different orbits, 
but all generate the full group $G_{32}$ and all have the same multiset of conjugacy classes.  
The only other minimum-length counter-examples to the conjecture in $G_{32}$ are the conjugates of this element and of its inverse.

There are also counter-examples in $G_{34}$: among the elements of reflection length at most $7$, there are two (mutually inverse) conjugacy classes of examples, of index $128$ and $145$ in the list of conjugacy classes generated by the command \verb|(ReflectionGroup(34)).conjugacy_classes_representatives()| in SageMath: these elements have three Hurwitz orbits of factorizations, two of which generate the whole group $G_{34}$. (All reflections in $G_{34}$ are conjugate to each other.)  It is also possible that there are other examples in $G_{34}$: the group contains fourteen conjugacy classes of elements of reflection length $8$, two of reflection length $9$, and two of reflection length $10$ (a total of nine mutually inverse pairs), but because the numbers of factorizations for these elements are enormous (they range from $705438720$ for the class with index 98 in the SageMath indexing 
to $42664933785600$ for the class with index 87
), it seems infeasible to test computationally whether they satisfy the conjecture.
\end{remark}

For the question of whether there is a uniform version of Theorem~\ref{thm:invariants}, see Section~\ref{sec:open}.


\section{The quasi-Coxeter property}
\label{sec:qc}

In the case of \emph{real} reflection groups (that is, finite Coxeter groups), the transitivity of the Hurwitz action on factorizations of a given element is closely tied to the \emph{quasi-Coxeter property}, which we define now.\footnote{The term \emph{quasi-Coxeter element} was originally defined by Voigt \cite{Voigt}, with a slightly broader meaning than the one given here -- see \cite[Rmk.~1.8]{BGRW}.  Our definition of a ``weak quasi-Coxeter element'' coincides with the definition of a ``quasi-Coxeter element'' from \cite{BGRW}.}

\begin{definition}
Let $G$ be a finite complex reflection group, and $g$ an element of $G$.  We say that $g$ is a \emph{weak quasi-Coxeter element} for $G$ if there is a shortest reflection factorization of $g$ that generates $G$.  If in fact every shortest reflection factorization of $g$ generates $G$, we say that $g$ is a \emph{strong quasi-Coxeter element} for $G$.
\end{definition}

The following theorem of Baumeister--Gobet--Roberts--Wegener, promised in the introduction, explains the connection.

\begin{theorem}[{part of \cite[Thms.~1.1 and 1.2]{BGRW}}]
\label{thm:BGRW qc}
If $w$ is a weak quasi-Coxeter element for a finite real reflection group $W$, then the Hurwitz action is transitive on the shortest reflection factorizations of $w$, and consequently $w$ is a strong quasi-Coxeter element for $W$.

Conversely, if the Hurwitz action is transitive on shortest reflection factorizations of $w$, then there is a parabolic subgroup (defined below) $W'$ of $W$ such that $w$ is a strong quasi-Coxeter element for $W'$.
\end{theorem}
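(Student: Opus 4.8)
The plan is to prove the two implications separately, exploiting the fact (Carter, \cite[Lem.~2]{Carter}) that in a finite real reflection group one has $\lR(w) = \codim \fix(w)$, together with its standard consequence that in any shortest factorization $w = t_1 \cdots t_\ell$ one has $\fix(w) = \bigcap_i \fix(t_i)$: each $t_i$ then fixes $\fix(w)$ pointwise, so every factor lies in the \emph{parabolic closure} $P \defeq W_{\fix(w)}$, the pointwise stabilizer of $\fix(w)$, which is parabolic by Steinberg's theorem. I would keep this containment $G_f \subseteq P$ in hand throughout.

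For the forward implication I would first reduce to the case that $W$ is irreducible, since both the weak quasi-Coxeter property and the Hurwitz orbit structure respect the decomposition of $W$ into irreducible factors (a factorization is a shuffle of factorizations of the components, and factors in distinct components commute, exactly as in Proposition~\ref{prop:components commute}). A weak quasi-Coxeter element of an irreducible rank-$n$ group $W$ generates $W$, so $\lR(w) \ge n$; since also $\lR(w) = \codim\fix(w) \le n$, we get $\lR(w) = n$, $\fix(w) = 0$, and the $n$ roots of any shortest factorization form a basis of $V$. Transitivity --- the heart of the statement --- I would attack by induction on $n$, using Bessis's theorem (Theorem~\ref{thm:bessis}) for Coxeter elements as the seed. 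The inductive step is to move an arbitrary shortest factorization, by Hurwitz moves, to one with a prescribed first factor $t$; deleting $t$ then identifies the remaining $n-1$ factors with a shortest factorization of $tw$ inside the rank-$(n-1)$ parabolic $W_{\fix(tw)}$, to which the inductive hypothesis should apply. Once transitivity is known, the strengthening to \emph{strong} quasi-Coxeter is immediate: the generated subgroup is a Hurwitz invariant, so if one shortest factorization generates $W$ then, all shortest factorizations lying in that single orbit, all of them generate $W$.

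The main obstacle in the forward direction is precisely this reduction: one must verify both that the various admissible first reflections are connected to one another through Hurwitz moves, and that $tw$ inherits the (weak) quasi-Coxeter property inside the smaller parabolic, so that the induction is legitimate. Neither is formal, and I expect --- as in \cite{BGRW} --- that establishing this connectivity ultimately requires the classification of finite Coxeter groups, with the classical types handled by explicit root-system combinatorics and the exceptional types by computer.

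For the converse I would argue as follows. Assuming transitivity, the Hurwitz-invariance of the generated subgroup shows that all shortest factorizations of $w$ generate one and the same subgroup $W' \defeq G_f$; by definition $w$ is then weak quasi-Coxeter for $W'$, and since $W'$ is itself a finite real reflection group the already-proved forward implication shows $w$ is a \emph{strong} quasi-Coxeter element for $W'$. It remains to show that $W'$ is \emph{parabolic} in $W$. Each factor of a shortest factorization lies in $P = W_{\fix(w)}$, so $W' \subseteq P$, while the factors' roots span a complement to $\fix(w)$ and hence $\fix(W') = \fix(w) = \fix(P)$; thus $W'$ is a full-rank reflection subgroup of $P$, and (since parabolic subgroups are exactly the groups $W_{\fix(W')}$) it is parabolic in $W$ precisely when $W' = P$. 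The crux --- and the step I expect to be hardest --- is to exclude the possibility that $W'$ is a \emph{proper}, full-rank, non-parabolic reflection subgroup of $P$: one must show that in that situation $w$ admits a further shortest factorization generating a different subgroup, contradicting transitivity. The dihedral group $\type{I}_2(6)$ already exhibits the phenomenon to be ruled out, since the order-$3$ rotation generates the non-parabolic subgroup $\type{I}_2(3)$ and indeed splits into two Hurwitz orbits of shortest factorizations. As in the forward direction, I expect this exclusion to rest on the structure theory and classification of reflection subgroups of finite Coxeter groups rather than on a uniform argument.
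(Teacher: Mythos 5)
First, a point of comparison: the paper does not prove Theorem~\ref{thm:BGRW qc} at all --- it is imported verbatim from Baumeister--Gobet--Roberts--Wegener \cite{BGRW} --- so your attempt must be measured against their proof. The parts of your outline that you actually carry out are correct and match the standard framing: Carter's lemma \cite{Carter} gives $\lR(w)=\codim\fix(w)$ and the linear independence of the roots in any shortest factorization, whence a weak quasi-Coxeter element of an irreducible rank-$n$ group has $\lR(w)=n$ and $\fix(w)=0$; Hurwitz-invariance of the generated subgroup makes ``transitive $\Rightarrow$ strong quasi-Coxeter'' immediate; and your reduction of the converse to the single claim that the common subgroup $W'=G_f$ is parabolic is sound (indeed, your argument shows $w$ is strong quasi-Coxeter for $W'$ directly, without even invoking the forward implication). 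But the two steps you explicitly defer --- Hurwitz connectivity via the induction, and parabolicity of $W'$ --- are not technical cleanup; they are the entire content of the theorem, so what you have is a correct frame around an empty center.

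Moreover, the inductive step is not merely unproven but false as you state it. You propose to delete a first factor $t$ and apply the inductive hypothesis to $tw$ ``inside the rank-$(n-1)$ parabolic $W_{\fix(tw)}$.'' The suffix factors generate a rank-$(n-1)$ reflection subgroup $W''$ with $\fix(W'')=\fix(tw)$, but $W''$ can be a \emph{proper} full-rank (hence non-parabolic) subgroup of $W_{\fix(tw)}$, and $tw$ is then generally not quasi-Coxeter in that parabolic: already in $\type{B}_2$ the element $-\id$ has the two reduced factorizations $(s_{e_1},s_{e_2})$ and $(s_{e_1-e_2},s_{e_1+e_2})$, generating two distinct $\type{A}_1\times\type{A}_1$ subgroups, and is quasi-Coxeter in each of them but not in $\type{B}_2$ --- exactly the configuration that can arise as a suffix inside a larger group. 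So the induction must instead track the reflection subgroup generated by the suffix, and the real difficulty becomes showing that for a quasi-Coxeter $w$ different suffixes cannot generate different subgroups; in \cite{BGRW} this is precisely the lattice-theoretic core (the reduction to simply-laced Weyl groups and the characterization of quasi-Coxeter elements by their reduced factorizations generating the full root lattice, with separate treatment of the non-crystallographic and exceptional cases, some by computer). The same machinery is what yields parabolicity of $W'$ in the converse. A smaller slip: Bessis's Theorem~\ref{thm:bessis} is not a usable seed for your induction, since quasi-Coxeter elements are in general not Coxeter elements (e.g.\ in $\type{D}_4$ or $\type{E}_8$) and the rank-$1$ base case is trivial anyway; the obstruction lives entirely in the inductive step you have not supplied.
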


Here a \emph{parabolic subgroup} is a subgroup that pointwise fixes a subspace of the space $V$ on which $W$ acts; in the case of a finite real reflection group, one may equivalently \cite[\S5-2]{Kane} say that it is a conjugate of a \emph{standard parabolic subgroup} generated by a subset of the standard Coxeter generating set.

In this section, we explore the extent to which Theorem~\ref{thm:BGRW qc} is valid in the complex setting.  We begin by characterizing the weak quasi-Coxeter elements in the group $G(m, p, n)$.

\begin{lemma}
\label{lem:qc}
Let $G = G(m, p, n)$.  Then $g \in G$ has a shortest factorization $f$ such that $G_f = G$ if and only if the following conditions hold:
  \begin{enumerate}[(i)]
  \item the cycle weights of $g$ generate $\ZZ / m\ZZ$,
  \item the weight of $g$ generates $p\ZZ / m\ZZ$, and
  \item no nontrivial subset of the cycles of $g$ has weight $0$ modulo $p$.
  \end{enumerate}
\end{lemma}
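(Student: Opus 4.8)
The plan is to reduce to connected standard-form factorizations and then analyze separately (a) when a connected shortest factorization exists at all and (b) when such a factorization fills out all of $G$. Throughout I write $d = \wt(g)$ and $r = \gcd(m, k_1, \ldots, k_{\cyc(g)})$ for the cycle weights $k_i$ of $g$, so that $r$ is the value $r(B)$ of the (eventual) single part $B$. Since $G_f$ is preserved by Hurwitz moves and every shortest factorization is Hurwitz-equivalent to one in standard form by Lemma~\ref{lemma any in p}, I may assume $f$ is in standard form. If $\Gamma_f$ were disconnected, with components $V_1, \ldots, V_k$ ($k \geq 2$), then by Proposition~\ref{prop:components commute 2} the image $\pi(G_f)$ would lie in the proper Young subgroup $\Symm_{V_1} \times \cdots \times \Symm_{V_k}$ of $\Symm_n$, whereas $\pi(G) = \Symm_n$; so $G_f = G$ forces $\Gamma_f$ connected. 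Thus it suffices to decide, for connected standard-form factorizations, when $g$ admits one generating $G$.

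First I would show that a connected shortest factorization exists if and only if the one-part cycle partition $\Pi_0$ is maximum, and that — granting condition (ii) — this happens if and only if (iii) holds. By Proposition~\ref{prop correct number} the partition induced by any connected shortest factorization is $\Pi_0$ and is maximum; conversely, if $\Pi_0$ is maximum then the construction of Remark~\ref{rmk:construct a std factorization} produces a connected factorization inducing $\Pi_0$ of length $n + \cyc(g) - v(\Pi_0) = \lR(g)$ by Theorem~\ref{shi}, hence shortest. For the equivalence with (iii): if some proper nonempty subset $S$ of cycles has weight $0 \pmod p$, then its complement does too, so $\{S, S^c\}$ is a cycle partition of value at least $2$; assuming (ii), either $p < m$, whence $d \neq 0$ and $v(\Pi_0) = 1 < 2$, or $p = m$, whence $d = 0$, both parts of $\{S, S^c\}$ have weight exactly $0$, and $v(\{S, S^c\}) = 4 > 2 = v(\Pi_0)$. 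In either case $\Pi_0$ is not maximum. Conversely, if (iii) holds then every part of every cycle partition is a nonempty subset of weight $0 \pmod p$ and hence must be the whole cycle set, so $\Pi_0$ is the only cycle partition and is trivially maximum.

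Next I would determine when a connected standard-form $f$ has $G_f = G$. Because $f$ is connected, the underlying transpositions of its factors span a connected graph on $\{1, \ldots, n\}$ and so generate $\Symm_n$, giving $\pi(G_f) = \Symm_n = \pi(G)$. Since $G_f \subseteq G$ and both surject onto $\Symm_n$, we have $G_f = G$ if and only if their diagonal subgroups (the kernels of $\pi$) agree. By Corollary~\ref{cor:diagonal}, the diagonal subgroup of $G_f$ equals that of $G\!\left(\tfrac{m}{r}, \tfrac{\gcd(m,d)}{r}, n\right)$, namely $\{[\id; (a_1, \ldots, a_n)] : r \mid a_i \text{ for all } i \text{ and } \sum_i a_i \equiv 0 \pmod{\gcd(m,d)}\}$, while that of $G$ is $\{[\id; (a_1, \ldots, a_n)] : \sum_i a_i \equiv 0 \pmod p\}$. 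If $r > 1$ the element $[\id; (1, -1, 0, \ldots, 0)]$ lies in the latter but not the former, so equality forces $r = 1$ — which is exactly condition (i), that the cycle weights generate $\ZZ/m\ZZ$. Granting $r = 1$, the two subgroups coincide if and only if the conditions $\sum_i a_i \equiv 0 \pmod{\gcd(m,d)}$ and $\sum_i a_i \equiv 0 \pmod p$ cut out the same set, which (as $\gcd(m,d)$ and $p$ both divide $m$) holds if and only if $\gcd(m,d) = p$ — exactly condition (ii), that $\wt(g)$ generates $p\ZZ/m\ZZ$.

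Finally I would assemble the pieces: $g$ is a weak quasi-Coxeter element if and only if a connected shortest standard factorization generating $G$ exists, if and only if $\Pi_0$ is maximum together with (i) and (ii), if and only if (i), (ii), and (iii) hold (using that, under (ii), maximality of $\Pi_0$ is equivalent to (iii)). I expect the main obstacle to be organizational rather than a single hard computation: one must cleanly separate the roles of the three hypotheses — conditions (i) and (ii) governing whether the (already forced) diagonal subgroup fills out $G$, and condition (iii) governing whether a connected, single-part shortest factorization exists in the first place — and one must correctly treat the degenerate weight-$0$ case $p = m$ when comparing the values $v(\Pi_0)$ and $v(\{S, S^c\})$.
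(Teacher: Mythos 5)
Your proposal is correct and takes essentially the same route as the paper's proof: both reduce to connected standard-form factorizations via Lemma~\ref{lemma any in p} and Proposition~\ref{prop:components commute 2}, extract conditions (i) and (ii) from the structure of $G_f$ given by Proposition~\ref{prop:isomorphic to a Gmpn} (you phrase this through Corollary~\ref{cor:diagonal} plus an order count over the projection $\pi$, while the paper cites the isomorphism and finiteness directly), and derive (iii) from maximality of the one-part partition using the same $p = m$ versus $p < m$ comparison of values. The only cosmetic difference is in the converse, where you build a connected shortest factorization explicitly from Remark~\ref{rmk:construct a std factorization} and Theorem~\ref{shi}, whereas the paper observes that under (iii) the one-part partition is the \emph{unique} cycle partition, so every shortest factorization is automatically connected.
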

(If $p = m$, then the second condition is vacuous.)
\begin{proof}
First, suppose that $g \in G = G(m, p, n)$ has a shortest factorization $f$ that generates $G$.

By Lemma~\ref{lemma any in p}, we may assume without loss of generality that $f$ is in standard form.
Since $G_f = G$, we have by Proposition~\ref{prop:components commute 2} that $\Pi_f$ must be the one-part partition of the cycles of $g$.  
Since $\Pi_f$ is the one-part partition, $f$ is connected.
Let $B$ be the unique part of $\Pi_f$ and let $r = r(B)$.  By Proposition~\ref{prop:isomorphic to a Gmpn}, $G_f \cong G\left(\frac{m}{r}, \frac{\gcd(m, d)}{r}, n\right)$, where $d$ is the weight of $g$.  Since $G_f = G(m, p, n)$, it follows that $r = 1$, and so the cycle weights of $g$ generate $\ZZ/m\ZZ$ (condition (i)), and that $\gcd(m, d) = p$, so that the weight of $g$ generates $p\ZZ / m\ZZ$ (condition (ii)).  To complete this direction, we must show that condition (iii) holds, i.e., that no nontrivial subset of the cycles of $g$ has weight $0$ modulo $p$.  We consider separately the case $p = m$ and $p < m$.

Assume $p = m$.  As observed in Remark~\ref{rmk:shi special cases}, in this case $v(\Pi) = 2 |\Pi|$ for any cycle partition $\Pi$; in particular, $v(\Pi_f) = 2$.  Since $f$ is a shortest factorization of $g$, we have by Proposition~\ref{prop correct number} that the partition $\Pi_f$ is maximum.  Assume for contradiction that there is a nontrivial subset $S$ of the cycles of $g$ such that $\wt(S) = 0$.  Since $\wt(g) = 0$, it follows that $\Pi \defeq \{S, \overline{S}\}$ is a cycle partition of $g$ (where $\overline{S}$ denotes the complement of $S$), having value $v(\Pi) = 4 > 2 = v(\Pi_f)$.  This contradicts the fact that $\Pi_f$ is maximum, so in fact no nontrivial subset of the cycles may have weight $0$, and condition (iii) holds in this case.

Now assume $p < m$.  Since the weight of $g$ generates $p\ZZ/m\ZZ$, it must be nonzero.  Thus $v(\Pi_f) = 1$.  Since $f$ is a shortest factorization of $g$, we have by Proposition~\ref{prop correct number} that the partition $\Pi_f$ is maximum.  Assume for contradiction that there is a nontrivial subset $S$ of the cycles of $g$ such that $\wt(S) = 0 \pmod{p}$.  Since $\wt(g) = 0 \pmod{p}$, it follows that $\Pi \defeq \{S, \overline{S}\}$ is a cycle partition of $g$ (where $\overline{S}$ denotes the complement of $S$), having value $v(\Pi) \geq 2 > 1 = v(\Pi_f)$.  This contradicts the fact that $\Pi_f$ is maximum, so in fact no nontrivial subset of the cycles may have weight $0$ modulo $p$, and condition (iii) holds in this case.

Conversely, suppose that $g \in G(m, p, n)$ satisfies the three given conditions.  Let $f$ be a shortest reflection factorization of $g$ and let $d = \wt(g)$.  By Lemma~\ref{lemma any in p}, we may assume without loss of generality that $f$ is in standard form.  By condition (iii), $g$ has a unique cycle partition $\Pi$, with all cycles in the same part.  Since $\Pi$ is unique, it must be the case that $\Pi_f = \Pi$, and consequently $f$ is connected.  Let $B$ be the unique part of $\Pi_f$ (containing all cycles of $g$), and let $r = r(B)$.  By condition (i), we have $r(B) = 1$.  By condition (ii), we have $\gcd(m, d) = p$.  Therefore, by Proposition~\ref{prop:isomorphic to a Gmpn}, we have 
\[
G_f \cong G\left(\frac{m}{r}, \frac{\gcd(m, d)}{r}, n\right) = G(m, p, n).
\]
Since $G_f \subseteq G(m, p, n)$, it follows that actually $G_f = G(m, p, n)$, as desired.
\end{proof}

As a consequence of Lemma~\ref{lem:qc}, we can directly extend the first half of Theorem~\ref{thm:BGRW qc} to the group $G(m, p, n)$.

\begin{corollary}
\label{cor:qc implies transitive}
Suppose that $g$ is a weak quasi-Coxeter element for $G(m, p, n)$. Then (a) the Hurwitz action is transitive on shortest reflection factorizations of $g$, and (b) $g$ is a strong quasi-Coxeter element for $G(m, p, n)$.
\end{corollary}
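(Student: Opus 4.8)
The plan is to reduce part (a) to the transitivity criterion of Corollary~\ref{cor transitive} by checking that the three conditions characterizing weak quasi-Coxeter elements in Lemma~\ref{lem:qc} are precisely what that criterion requires, and then to obtain part (b) from part (a) together with the Hurwitz-invariance of the subgroup $G_f$.

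First I would apply Lemma~\ref{lem:qc}: since $g$ is a weak quasi-Coxeter element, by definition it has a shortest factorization $f_0$ with $G_{f_0} = G(m,p,n)$, and so the lemma guarantees that conditions (i) (the cycle weights generate $\ZZ/m\ZZ$), (ii) (the weight of $g$ generates $p\ZZ/m\ZZ$), and (iii) (no nontrivial subset of the cycles has weight $0 \pmod p$) all hold. The target is the criterion of Corollary~\ref{cor transitive}, which demands that $\Parmax(g)$ be a singleton $\{\Pi\}$ and that each part $B$ of $\Pi$ satisfy $|B| = 1$ or $r(B) = 1$.

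The key step, and the only one requiring any argument, is to read off these two requirements from conditions (i) and (iii). I would first observe that condition (iii) forces $g$ to have a unique cycle partition: any part of a cycle partition is a nonempty set of cycles of weight $0 \pmod p$, so if a cycle partition had two or more parts, then a single part would be a nontrivial proper subset of the cycles of weight $0 \pmod p$, contradicting (iii). Hence the one-part partition $\Pi = \{B\}$ (which is always a cycle partition, as $\wt(g) \equiv 0 \pmod p$) is the only one, and in particular $\Parmax(g) = \{\Pi\}$ is a singleton. For this single part, $r(B) = \gcd(m, k_1, \ldots, k_{\cyc(g)})$ is exactly the gcd of $m$ with the cycle weights, and condition (i) says these generate $\ZZ/m\ZZ$, i.e.\ $r(B) = 1$. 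Thus the hypotheses of Corollary~\ref{cor transitive} are met, which proves (a).

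For part (b), I would use that $G_f$ is preserved by every Hurwitz move and hence is constant along each Hurwitz orbit. By part (a), every shortest factorization $f$ of $g$ lies in the same orbit as $f_0$, and $G_{f_0} = G(m,p,n)$; therefore $G_f = G(m,p,n)$ for all such $f$, which is exactly the statement that $g$ is a strong quasi-Coxeter element. I expect no serious obstacle here, since all the real work is done by Lemma~\ref{lem:qc} and Corollary~\ref{cor transitive}; the only delicate point is the implication that (iii) gives uniqueness of the cycle partition, and it is worth noting that condition (ii) plays no role in the transitivity argument (it is needed only for the full-group conclusion of Lemma~\ref{lem:qc}, distinguishing $G(m,p,n)$ from a proper subgroup $G(\tfrac{m}{r}, \tfrac{\gcd(m,d)}{r}, n)$).
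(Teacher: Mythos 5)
Your proposal is correct and follows essentially the same route as the paper: Lemma~\ref{lem:qc}(iii) forces the one-part partition to be the unique cycle partition, Lemma~\ref{lem:qc}(i) gives $r(B)=1$ for its single part, Corollary~\ref{cor transitive} then yields (a), and (b) follows from the Hurwitz-invariance of $G_f$ together with weak quasi-Coxeterness. Your explicit justification that (iii) rules out any multi-part cycle partition (and your observation that (ii) is not needed for transitivity) is a detail the paper leaves implicit, but it matches its argument exactly.
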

\begin{proof}
Suppose $g$ is a weak quasi-Coxeter element for $G(m, p, n)$.  By Lemma~\ref{lem:qc}(iii), the unique cycle partition $\Pi$ of $g$ is the one-part partition.  For this partition, with unique part $B$ containing all cycles of $g$, we have by Lemma~\ref{lem:qc}(i) that $r(B) = 1$.  Therefore, by Corollary~\ref{cor transitive}, all shortest reflection factorizations of $g$ belong to a single Hurwitz orbit.  This establishes (a).  Since the group generated by a factorization is preserved under Hurwitz moves, it follows from (a) that all shortest factorizations of $g$ generate the same subgroup.  Since $g$ is weak quasi-Coxeter, this subgroup is the whole group $G(m, p, n)$.  This establishes (b).
\end{proof}

As in the case of Theorem~\ref{thm:invariants}, the same result does not hold if we replace $G(m, p, n)$ with an arbitrary complex reflection group: Example~\ref{rem:G16} shows that the statement of Corollary~\ref{cor:qc implies transitive}(a) fails for $G_{16}$, while the next example shows that even the weaker (b) fails in general.

\begin{example}
Consider the exceptional group $G = G_{10}$, which is generated by two reflections $a, b$ subject to the relations $a^3 = b^4 = 1$, $abab=baba$.  
Let $g = ba^{-1}ba^{-1}$.  One can show that $g$ has reflection length $3$.  One shortest reflection factorization of $g$ is
$
(b, a, aba^{-1})
$,
and this triple obviously generates the entire group $G$.  Thus, $g$ is weak quasi-Coxeter for $G$.  However, another shortest factorization for $g$ is
$(ba^{-1}b^{-1}, b^{-1}, a^{-1})$.
This triple generates an index-$2$ subgroup of $G$ isomorphic to $G_7$.  Thus $g$ is not strong quasi-Coxeter for $G$.
\end{example}

For an arbitrary complex reflection group $G$, define the \emph{rank} $\rank(G)$ to be the dimension of the subspace of $V$ on which $G$ acts nontrivially (i.e., the vectors that are not fixed by every $g$ in $G$).  Since each reflection fixes a hyperplane, it is clear that every generating set of reflections must have size at least $\rank(G)$.
In the preceding examples, the elements under consideration have reflection length strictly larger than the rank of the group they belong to.  The next result, a uniform version of Corollary~\ref{cor:qc implies transitive}(b), shows that this is not a coincidence.

\begin{theorem}
\label{cor:qc length n}
If $G$ is any finite complex reflection group and $g$ is a weak quasi-Coxeter element for $G$ whose reflection length is equal to the rank of $G$, then $g$ is a strong quasi-Coxeter element for $G$.
\end{theorem}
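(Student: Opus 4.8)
The plan is to reduce to the case that $G$ is irreducible and then split according to the Shephard--Todd classification, handling the infinite family by a result already in hand and the exceptional groups by a geometric reduction followed by a direct computation.

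\emph{Reduction to irreducible $G$.} Write $g = \prod_i g_i$ under a decomposition $G = \prod_i G_i$ into irreducibles, and fix a shortest factorization $f_0$ of $g$ generating $G$. Reflection length is additive over the product, so $\sum_i \lR(g_i) = \lR(g) = \rank(G) = \sum_i \rank(G_i)$. The restriction of $f_0$ to the $i$th factor is a reflection factorization of $g_i$ that generates $G_i$, hence has length at least $\rank(G_i)$; since $f_0$ is shortest, these lengths also sum to $\lR(g) = \sum_i \lR(g_i)$, and a short bookkeeping argument forces each restriction to have length exactly $\rank(G_i) = \lR(g_i)$ and to generate $G_i$. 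Thus each $g_i$ is weak quasi-Coxeter for $G_i$ with $\lR(g_i) = \rank(G_i)$. As a shortest factorization of $g$ likewise restricts to shortest factorizations of the $g_i$, and generates $G$ if and only if each restriction generates $G_i$, it suffices to prove the theorem for $G$ irreducible.

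\emph{The infinite family.} By the Shephard--Todd classification, an irreducible $G$ is either one of the groups $G(m,p,n)$ or one of finitely many exceptional groups. If $G = G(m,p,n)$, the conclusion is immediate from Corollary~\ref{cor:qc implies transitive}(b), which shows that every weak quasi-Coxeter element of $G(m,p,n)$ is already strong quasi-Coxeter, with no hypothesis on reflection length. It is precisely for the exceptional groups that the assumption $\lR(g) = \rank(G)$ does real work, and it cannot be dropped: the group $G_{10}$ contains a weak quasi-Coxeter element of reflection length $3 > 2 = \rank(G_{10})$ that fails to be strong quasi-Coxeter.

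\emph{Geometric reduction for the exceptional groups.} Here I would first record that every shortest factorization $f$ of $g$ has $G_f$ of full rank $n$. This rests on the standard structural fact that the factors of a reduced reflection factorization lie in the parabolic closure $G_{\fix(g)}$, i.e.\ each factor fixes $\fix(g)$ pointwise. Writing $f_0 = (s_1, \ldots, s_n)$ and applying this fact gives $\fix(g) \subseteq \bigcap_i \fix(s_i) = \fix(G)$, and since $\fix(G) \subseteq \fix(g)$ always, we obtain $\fix(g) = \fix(G)$. Consequently, for any shortest factorization $f$ one has $\fix(G) \subseteq \fix(G_f) \subseteq \fix(g) = \fix(G)$, so $G_f$ is a full-rank reflection subgroup of $G$ containing $g$, and $g$ is again weak quasi-Coxeter of reflection length equal to rank for $G_f$.

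\emph{Conclusion and the main obstacle.} For each of the finitely many exceptional $G$ it then remains to verify that such a full-rank $G_f$ cannot be a \emph{proper} subgroup, i.e.\ that every shortest factorization of a weak quasi-Coxeter element $g$ with $\lR(g) = \rank(G)$ generates all of $G$. I would check this by computer, using the SageMath/GAP/Chevie framework of Section~\ref{sec:invariants}; the computation is feasible precisely because $\lR(g) = \rank(G)$ confines attention to the finitely many conjugacy classes of elements of minimal possible reflection length, at the accessible bottom of the range already explored for $G_{32}$ and $G_{34}$. This last step is the crux: the inclusion $G_f \le G$ is automatic and the geometric argument pins down only the rank of $G_f$, so the entire difficulty is the lower bound -- ruling out a proper full-rank reflection subgroup -- for which, lacking a uniform argument of the kind available for $G(m,p,n)$ in Corollary~\ref{cor:qc implies transitive}, one falls back on the classification-dependent computation.
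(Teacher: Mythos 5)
Your overall route is essentially the paper's: the same additivity-and-induction reduction to irreducible groups, Corollary~\ref{cor:qc implies transitive}(b) for the infinite family, and a classification-dependent computer verification for the exceptional groups (the paper dispatches the real exceptional groups by citing Theorem~\ref{thm:BGRW qc} rather than recomputing them, and its check works by counting shortest factorizations character-theoretically and exhausting Hurwitz orbits; folding the real groups into the computation as you propose is heavier but not wrong). The reduction step and the final computational step match the paper's proof.

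The one step that is not sound as stated is your ``geometric reduction.'' The fact you call standard --- that every factor of a shortest reflection factorization of $g$ fixes $\fix(g)$ pointwise --- is a theorem for \emph{real} reflection groups, where it follows from Carter's lemma because there $\lR(g) = \codim\fix(g)$: the dimension count gives $\bigcap_i \fix(s_i) \subseteq \fix(g)$ with $\codim \bigl( \bigcap_i \fix(s_i) \bigr) \le \lR(g)$, and forces equality $\bigcap_i \fix(s_i) = \fix(g)$ \emph{only} when $\lR(g) = \codim\fix(g)$. In complex reflection groups that equality genuinely fails (this is exactly the point of the Foster-Greenwood and Shi citations in Section~\ref{sec:reflection length}; e.g.\ $[\id;(1,1,1,0)] \in G(3,3,4)$ has $\lR = 4 > 3 = \codim\fix$), and your application sits precisely in the problematic regime: to invoke the fact for $f_0$ you would need $\codim\fix(g) = n$, which is essentially the conclusion $\fix(g) = \fix(G)$ you are trying to extract. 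So the inclusion $\fix(g) \subseteq \bigcap_i \fix(s_i)$ is unjustified in the complex setting (it may well be true --- it can be verified in $G(m,p,n)$ via the structure of maximum cycle partitions --- but no standard argument gives it, and you offer none). Fortunately the step is not load-bearing: the computation you describe in your last paragraph checks directly that every shortest factorization of each relevant conjugacy-class representative generates all of $G$, which subsumes the full-rank claim entirely. Deleting the geometric interlude, or restricting its justification to the real case, leaves a proof that coincides with the paper's.
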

\begin{proof}
First, we claim that the truth of the statement for all complex reflection groups follows from its truth for irreducible groups.  To this end, suppose $G = X \times Y$ is reducible.  Then every reflection factorization $f$ of an element $g = (x, y)$ of $G$ is the result of shuffling together a ($X \times \{\id_Y\}$)-reflection factorization $f_X$ of $(x, 1)$ and a ($\{\id_X\} \times Y$)-reflection factorization $f_Y$ of $(1, y)$.  
Now suppose $f$ is a minimum-length reflection factorization of $g$ such that $G_f = G$ and $\#f = \rank(G)$.  Since the factors in $f_X$ commute with those in $f_Y$, we have $G = G_f = G_{f_X} \times G_{f_Y}$.  It follows that $x$ is a weak quasi-Coxeter element for $X$ and $y$ is a weak quasi-Coxeter element for $Y$, and so also that $\lR(x) \geq \rank(X)$ and $\lR(y) \geq \rank(Y)$.  On the other hand, we have by hypothesis that
\[
\rank(X) + \rank(Y) = \rank(G) = \lR(g) = \lR(x) + \lR(y),
\]
so in fact $\lR(x) = \rank(X)$ and $\lR(y) = \rank(Y)$.  By induction on the number of irreducible components of $G$, we may assume that $x$ is strong quasi-Coxeter for $X$ and $y$ is strong quasi-Coxeter for $Y$, and consequently $G$ is strong quasi-Coxeter for $G$.  Therefore, it suffices to check the result for irreducible groups.

We now proceed case-by-case.  The infinite family is covered by Corollary~\ref{cor:qc implies transitive}, and the real exceptional groups are covered by Theorem~\ref{thm:BGRW qc}.  For each non-real exceptional group, the check is a finite computation; however, because of the size of some of the groups involved, it is not a \emph{trivial} check.  We describe our computational approach, which we implemented on SageMath \cite{Sage} using its interface with GAP \cite{GAP} and Chevie \cite{Chevie}; the code to carry out the check is attached to the arXiv version of this paper as an ancilliary file.

All the properties in the theorem statement are invariant under conjugation, so it suffices to check a set of conjugacy class representatives.

Given a conjugacy class representative $g$, we first confirm that its reflection length is equal to the rank of the group.
We use a standard technique based on the character theory of the group $G$ (as in, e.g., \cite{ChapuyStump}) to compute the total number of minimum-length reflection factorizations of $g$.

Next, we produce a reflection factorization of $g$ by successively testing reflections $r$ to see if $\lR(r^{-1}g) < \lR(g)$.  If so, $g$ has a shortest factorization that begins $(r, \ldots)$, and we proceed recursively.  Having produced a factorization, we construct its Hurwitz orbit by applying Hurwitz moves one by one, discarding already-discovered factorizations.  If this orbit does not exhaust the minimum-length reflection factorizations, we find a new factorization (not in any previously-produced Hurwitz orbit) and continue until the sum of the sizes of the Hurwitz orbits produced is equal to the number of minimum-length reflection factorizations.

If $g$ has only one Hurwitz orbit of minimum-length factorizations then all its factorizations generate the same subgroup and the result is immediate.  Otherwise, we check for each orbit whether or not its factorizations generate the whole group $G$.  

The result of this calculation was to verify the claim in all the non-real irreducible complex reflection groups.
\end{proof}

The condition that the reflection length of $g$ be equal the rank of $G$ is quite natural: in a real reflection group $W$ of rank $n$, it follows from Carter's theorem \cite[Lem.~2]{Carter} that $\lR(g) \leq n$ for all $g \in W$, and so all quasi-Coxeter elements have reflection length $n$.  Moreover, in a complex reflection group of rank $n$, Coxeter elements (if any exist) all have reflection length $n$ \cite[\S7.1]{Bessis}.  Thus, it is natural to ask what Lemma~\ref{lem:qc}
says in the case of elements of reflection length $n$.

For comparison, we mention the known descriptions of quasi-Coxeter and Coxeter elements in combinatorial groups.  In type $\type{B}_n$ (the group $G(2, 1, n)$), the quasi-Coxeter elements are precisely the Coxeter elements; combinatorially, these are the $n$-cycles of weight $1$ \cite[Lem.\ 6.4 and its proof]{BGRW}.  In type $\type{D}_n$ (the group $G(2, 2, n)$), the Coxeter elements have cycle type $(n - 1, 1)$, with both cycles of weight $1$, while the quasi-Coxeter elements include all elements with two cycles, both of weight $1$ \cite[Rem.\ 8.3]{BGRW}.  Coxeter elements are defined for the \emph{well-generated} complex reflection groups (those with a generating set of reflections whose cardinality is equal to the rank of the group) \cite{RRS, Bessis}.  Among the groups $G(m, p, n)$, the well-generated ones are $G(m, 1, n)$, whose Coxeter elements are the $n$-cycles of primitive weight modulo $m$, and $G(m, m, n)$, whose Coxeter elements are the elements of cycle type $(n - 1, 1)$ in which both cycles have primitive weight modulo $m$.

\begin{corollary}
In $G(m, 1, n)$, an element is quasi-Coxeter if and only if it has a single cycle and its weight is primitive modulo $m$ (i.e., if and only if it is a Coxeter element).  All such elements have reflection length $n$.

In $G(m, m, n)$, an element is quasi-Coxeter of reflection length $n$ if and only if it has exactly two cycles and their weights are primitive modulo $m$.

If $1 < p < m$, the group $G(m, p, n)$ does not contain any quasi-Coxeter elements of reflection length $n$.
\end{corollary}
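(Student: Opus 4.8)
The plan is to read off all three statements from a single computation that feeds the characterization of weak quasi-Coxeter elements in Lemma~\ref{lem:qc} into Shi's reflection-length formula (Theorem~\ref{shi}). Since Corollary~\ref{cor:qc implies transitive} shows that weak and strong quasi-Coxeter coincide in $G(m, p, n)$, the word ``quasi-Coxeter'' is unambiguous, and Lemma~\ref{lem:qc} tells us that $g$ is quasi-Coxeter exactly when its cycle weights generate $\ZZ/m\ZZ$ (condition (i)), its weight generates $p\ZZ/m\ZZ$ (condition (ii)), and no nontrivial subset of its cycles has weight $0 \pmod p$ (condition (iii)). The crucial structural consequence I would extract first is that (iii) forces the one-part partition to be the \emph{only} cycle partition of $g$: in any cycle partition every part has weight $0 \pmod p$, so if there were two or more parts each would be a nontrivial subset of weight $0 \pmod p$, contradicting (iii). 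Hence $\Parmax(g) = \{\Pi\}$ with $|\Pi| = 1$, and Shi's formula collapses to $\lR(g) = n + \cyc(g) - 1 - v_{m}(\Pi)$, where $v_{m}(\Pi) \in \{0, 1\}$ is $1$ precisely when $\wt(g) = 0$.

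From here I would split into the three regimes by determining $v_{m}(\Pi)$. When $p = 1$, condition (iii) (where ``weight $0 \pmod 1$'' is automatic) forces $\cyc(g) = 1$, and conditions (i) and (ii) both collapse to $\gcd(m, \wt(g)) = 1$; thus $g$ is quasi-Coxeter iff it is a single cycle of primitive weight, and (taking $m \geq 2$, so that primitivity makes the weight nonzero) $v_{m}(\Pi) = 0$ gives $\lR(g) = n + 1 - 1 = n$, proving the first statement. When $1 < p < m$, condition (ii) forces $\wt(g)$ to generate the nontrivial subgroup $p\ZZ/m\ZZ$, so $\wt(g) \neq 0$, $v_{m}(\Pi) = 0$, and $\lR(g) = n + \cyc(g) - 1$; imposing $\lR(g) = n$ forces $\cyc(g) = 1$, whence the unique cycle weight equals $\wt(g)$ and conditions (i) and (ii) demand simultaneously that $\gcd(m, \wt(g)) = 1$ and $\gcd(m, \wt(g)) = p > 1$. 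This is impossible, so $G(m, p, n)$ has no quasi-Coxeter element of reflection length $n$, which is the third statement.

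The remaining case $p = m$ gives the second statement. Condition (ii) is vacuous, and $\wt(g) = 0$ as an element of $G(m, m, n)$, so $v_{m}(\Pi) = 1$ and $\lR(g) = n + \cyc(g) - 2$; thus $\lR(g) = n$ iff $\cyc(g) = 2$. For two cycles of weights $k_1$ and $k_2 = -k_1$, condition (i) reads $\gcd(m, k_1) = 1$, i.e., both weights are primitive, and condition (iii) is then automatic since each single cycle has nonzero weight modulo $m$; conversely, two cycles of primitive weight satisfy (i) and (iii) and have reflection length $n$ by the same count. This yields the claimed equivalence.

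The argument is essentially careful bookkeeping once Lemma~\ref{lem:qc} and Theorem~\ref{shi} are available, and I do not expect a serious obstacle. The one genuinely delicate point, which is the hinge of all three cases, is the correct evaluation of $v_{m}(\Pi)$, i.e., distinguishing weight \emph{exactly} $0$ from weight $0 \pmod p$; this is exactly what forces $\cyc(g) = 2$ when $p = m$ but $\cyc(g) = 1$ (and hence a contradiction) when $1 < p < m$. A minor caveat worth flagging is the degenerate case $m = 1$, where $G(1, 1, n) = \Symm_n$ has rank $n - 1$ and an $n$-cycle has reflection length $n - 1$; the reflection-length-$n$ assertions are to be read with $m \geq 2$, consistent with the rank-$n$ setting of the surrounding discussion.
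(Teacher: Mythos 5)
Your proof is correct, and on the first two cases it is essentially the paper's own argument: both extract from Lemma~\ref{lem:qc}(iii) that the one-part partition is the unique (hence maximum) cycle partition and then run Shi's formula (Theorem~\ref{shi}) --- for $p=1$, condition (iii) directly forces $\cyc(g)=1$ and condition (i) gives primitivity, while for $p=m$ the value $v(\Pi)=2$ turns $\lR(g)=n$ into $n = n + \cyc(g) - 2$, i.e., $\cyc(g)=2$. Where you genuinely diverge is the case $1<p<m$: the paper disposes of it in one line by observing that $G(m,p,n)$ is then not generated by any set of $n$ reflections (it is not well-generated), so an element of reflection length $n$ cannot have a shortest factorization generating the group; you instead stay entirely inside Lemma~\ref{lem:qc}, using condition (ii) to force $\wt(g)\neq 0$, hence $v_m(\Pi)=0$ and $\lR(g)=n+\cyc(g)-1$, so $\cyc(g)=1$, whereupon conditions (i) and (ii) require $\gcd(m,\wt(g))$ to equal both $1$ and $p>1$, a contradiction. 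The paper's route is shorter but invokes an external generation fact stated without proof; yours is self-contained given the lemma, at the cost of redoing the weight bookkeeping. Your caveat about $m=1$ is also apt: the assertion that all such elements of $G(m,1,n)$ have reflection length $n$ silently assumes $m\geq 2$ (in $G(1,1,n)=\Symm_n$ an $n$-cycle has $v_m(\Pi)=1$ and reflection length $n-1$), so flagging it is a small gain in precision rather than a gap.
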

\begin{proof}
Suppose $g$ is a quasi-Coxeter element for $G(m, 1, n)$.  Since $p = 1$, every subset of cycles of $g$ has weight $0$ modulo $p$.  Then by Lemma~\ref{lem:qc}(iii), $g$ must have only one cycle.  By Lemma~\ref{lem:qc}(i), the weight of that cycle must be primitive modulo $m$.  This completes one direction; the converse is completely straightforward by Lemma~\ref{lem:qc}.  Furthermore, by Theorem~\ref{shi}, every such element has reflection length $n$.

Suppose instead that $g$ is a quasi-Coxeter element for $G(m, m, n)$ of reflection length $n$.  By Lemma~\ref{lem:qc}(iii), the only cycle partition of $g$ is the one-part partition, having value $2$.  By Theorem~\ref{shi}, we have $n = \lR(g) = n - \cyc(g) + 2$, and consequently $g$ has exactly two cycles.  Since $g \in G(m, m, n)$, these cycle weights sum to $0$, so each generates the same subgroup of $\ZZ/m\ZZ$.  By Lemma~\ref{lem:qc}(i), the cycle weights are primitive modulo $m$.  This completes one direction; the converse is completely straightforward.

Finally, if $1 < p < m$ then $G(m, p, n)$ is not generated by any set of $n$ reflections.  Therefore, if $g$ has reflection length $n$, none of its shortest factorizations generate $G(m, p, n)$.  Consequently $G(m, p, n)$ has no quasi-Coxeter elements of reflection length $n$.
\end{proof}

\section{Open problems}
\label{sec:open}

We end with some natural problems left open by the preceding work.

\subsection{Uniform counting?}

Is a uniform version of Theorem~\ref{main theorem}?  That is, can the quantities that appear in the statement be given an interpretation that generalizes to an arbitrary complex reflection group?  One would hope for some sort of geometric interpretation, especially in the real case.

\subsection{Missing invariants?}

Remark~\ref{rem:counter-examples} shows that the invariants \[
\left(\text{product } g, \text{generated subgroup } H, \text{multiset of } H\text{-conjugacy classes}\right)
\]
fail to distinguish Hurwitz orbits of minimum-length factorizations in the groups $G_{32}$ and $G_{34}$ (but not in any other irreducible complex reflection group).  Is there some other natural invariant that can distinguish the Hurwitz orbits in these cases, as well?

In the real case, it follows from \cite[Cor.~1.3]{BGRW} that just the invariants $g$ and $H$ suffice for shortest factorizations.  All known proofs of this fact are case-by-case, and it would be highly desirable to give a uniform proof.

\subsection{Special subgroups for transitive elements?}

If the Hurwitz action is transitive on the shortest reflection factorizations of an element $g$, then $g$ is a strong quasi-Coxeter element for the subgroup generated by these factorizations.  In contrast with the real case (Theorem~\ref{thm:BGRW qc}), the subgroups that arise this way are not limited to the parabolic subgroups, even when restricting attention to $G(m, p, n)$.
\begin{example}
For any $n > 1$, $m > 2$, consider $g = [\id; (1, \ldots, 1)] \in G = G(m, 1, n)$ (an element of the center of $G$).  By Remark~\ref{rmk:shi special cases} we have $\lR(g) = n$,
and so one shortest factorization is as a product of the $n$ diagonal reflections of weight $1$.
By Corollary~\ref{cor special}, the Hurwitz action is transitive on shortest reflection factorizations of $g$.  Thus $g$ is a strong quasi-Coxeter element for the diagonal subgroup $(\ZZ/m\ZZ)^n$ in $G$.  But $G$ is not a parabolic subgroup because no nonzero vector is fixed by $g$.
\end{example}

Is there a good description of the subgroups of a complex reflection group that arise in this way?

\subsection{Uniform proof?}

Theorem~\ref{cor:qc length n} is valid for any complex reflection group, but its proof relies heavily on the classification and brute-force checks in the exceptional cases.  Is it possible to give a uniform proof that ``explains'' why the result should be true?

\end{document}